 \newtheorem{thm}{Theorem}
 \newtheorem{prop}{Proposition}
 \newtheorem{lem}{Lemma}
\title{Moran model with simultaneous strong and weak selections: convergence towards a $\Lambda$-Wright-Fisher SDE}
\author{François Gaston Ged}
\date{}
\newcommand{\intervalleff}[2]{\mathopen{[}#1\,,#2\mathclose{]}}
\newcommand{\intervallefo}[2]{\mathopen{[}#1\,,#2\mathclose{)}}
\newcommand{\intervalleof}[2]{\mathopen{(}#1\,,#2\mathclose{]}}
\newcommand{\intervalleoo}[2]{\mathopen{(}#1\,,#2\mathclose{)}}
\begin{document}
\maketitle

\begin{abstract}
	We study a population model of fixed size undergoing strong selection where individuals accumulate beneficial mutations, namely the Moran model with selection.
	Schweinsberg showed that in his specific setting of \cite{S151,S152}, due to the strong selection, the genealogy of the population converges to the so-called Bolthausen-Sznitman's coalescent as the size goes to infinity.
	In this paper we sophisticate the model by splitting the population into two adversarial subgroups, that can be interpreted as two different alleles, one of which has a selective advantage over the other through a weak selection mechanism.
	We show that the proportion of disadvantaged individuals converges to the solution of a stochastic differential equation (SDE) as the population's size goes to infinity, named the $\Lambda$-Wright-Fisher SDE with selection.
	This SDE already appeared in the $\Lambda$-lookdown model with selection studied by Bah and Pardoux \cite{BP15}, in the case where the population's genealogy is described by Bolthausen-Sznitman's coalescent.\\\\
	\textbf{Subject classification:} 60J80, 92D15, 92D25, 60H10.\\
	\textbf{Keywords:} Moran model with selection, Bolthausen-Sznitman's coalescent, $\Lambda$-Wright-Fisher SDE.
\end{abstract}

\section{Introduction}

The Moran model is a classical model in population genetics.
It describes the evolution in continuous time of a haploid population with constant size, where generations are overlapping.
Every individual dies at rate $1$ and is instantaneously replaced by a copy of an individual chosen uniformly at random in the remaining population, including the individual who just died.
It is well-known that the genealogy of the Moran model is described by the so-called \textit{Kingman's coalescent}, which is the only exchangeable\footnote{\textit{Exchangeable} refers to the property that permuting the labels of several individuals in the sample leaves the law of the process unchanged.} coalescent process where merging events are only binary and non-simultaneous.
Some kind of universality of Kingman's coalescent for the genealogy of discrete time population models with fixed size was established in \cite{M00}; this result is known as M\"ohle's Lemma.
In \cite{M02}, M\"ohle also obtained convergence results towards different coalescents, and even allowed the size of the population to vary.

In the Moran model of size $N$, when the population is split into two subgroups, say the individuals carrying allele $X$ and the ones carrying allele $Y$, the proportion $(X_t)_{t\geq 0}$ of allele $X$ in the whole population converges as $N\to\infty$, when speeding up the time by a factor $N$, towards the Wright-Fisher diffusion, that is the solution to the SDE
\begin{align*}
	\mathrm{d}X_t=\frac{1}{2}\sqrt{X_t(1-X_t)}\mathrm{d}W_t,
\end{align*}
where $W$ is a standard Brownian motion.
Note the symmetry between the two alleles reflecting the fact that none of them has a selective advantage over the other.
One of the implications is the well-known duality relation between the number of blocks in Kingman's coalescent and the Wright-Fisher diffusion, as stated in Theorem 2.7 in \cite{B09}.
Namely, denoting $K_t$ the number of blocks in Kingman's coalescent at time $t$, it holds for all $x\in\intervalleoo{0}{1}$ and $k\in\mathbb{N}$ that
\begin{align}\label{Equation duality Kingman}
	\mathbb{E}(X_t^k|X_0=x)=\mathbb{E}(x^{K_t}|K_0=k).
\end{align}
The duality actually holds for more general coalescents, namely $\Lambda$-coalescents, and some Fleming-Viot processes, as shown in \cite{BL03} equation (18), that is, for $K_t$ the number of blocks at time $t$ in a $\Lambda$-coalescent and $(X_t)_{t\geq 0}$ the solution of some SDE.

The coalescence rates in a $\Lambda$-coalescent are characterized by a finite measure $\Lambda$ on $\intervalleff{0}{1}$, such that if the coalescent contains $k$ blocks at some given time, any sub-family of size $\ell$ among the $k$ blocks merge at rate given by
\begin{align*}
	\lambda_{k,\ell}:=\int_{\intervalleff{0}{1}}p^{\ell-2}(1-p)^{k-\ell}\Lambda(\mathrm{d}p).
\end{align*}
In particular, the blocks are exchangeable, in the sense that all the possible combinations of $\ell$ blocks have the same rate of merging.
Kingman's coalescent corresponds to the case $\Lambda=\delta_0$, the Dirac mass at $0$.
Another instance of $\Lambda$-coalescent is the Bolthausen-Sznitman coalescent introduced in \cite{BS98}, corresponding to $\Lambda(\mathrm{d}p)=\mathrm{d}p$.
Its importance is due to its connections to models such as spin glasses, continuous branching processes, travelling waves, some population models, see e.g. \cite{B09} and references therein.
The populations where we expect to observe the Bolthausen-Sznitman coalescent are for instance populations undergoing strong selection \cite{S152}, exploring uninhabited territories \cite{BDM07}, or quickly adapting to the environment \cite{NH13,DWF13}.
In those cases, an individual sometimes reproduces more (or faster) and generates a family of size of the same order as the population size.

\vspace*{-0.5cm}

\paragraph{Moran model with selection and $\Lambda$-lookdown model.}

When a death occurs in the Moran model, instead of choosing an individual uniformly at random to reproduce, we can include a selection component in the dynamics and choose the parent proportionally to its \textit{fitness}.
An instance of a Moran model with selection has been studied by Schweinsberg in \cite{S151} and \cite{S152}, where the individuals accumulate beneficial mutations increasing their reproduction rates; this is the model we are interested in in this work and we will describe it in more details later on.
The main result of \cite{S152} establishes that the genealogy of the Moran model with selection of Schweinsberg converges towards the Bolthausen-Sznitman coalescent, as the size of the population goes to infinity.
Let us connect it with another population model.

In \cite{BP15}, the authors study an infinite size population model called the $\Lambda$-lookdown model with selection, whose genealogy is that of the corresponding $\Lambda$-coalescent. We are of course interested in the Bolthausen-Sznitman case $\Lambda(\mathrm{d}p)=\mathrm{d}p$ on $\intervalleff{0}{1}$.
Each individual carries either allele $X$ or allele $Y$, the selection advantaging the individuals of type\footnote{In our work, \textit{type} will refer to another concept. We will get rid of the ambiguity when introducing our model.} $X$.
Theorem 3.5 in \cite{BP15} shows that the proportion of carriers of $Y$ is the solution of the following SDE:
\begin{align}\label{Bolthausen-Sznitman SDE}
	\mathcal{Y}_t=\mathcal{Y}_0-\alpha\int_0^t\mathcal{Y}_s(1-\mathcal{Y}_s)\mathrm{d}s+\int_{\intervalleff{0}{t}\times\intervalleff{0}{1}^2}p(\mathds{1}_{\left\{u\leq\mathcal{Y}_{s-}\right\}}-\mathcal{Y}_{s-})M(\mathrm{d}s,\mathrm{d}u,\mathrm{d}p),
\end{align}
where $M$ is a Poisson point process with intensity $\mathrm{d}t\otimes\mathrm{d}u\otimes\frac{\mathrm{d}p}{p^2}$, and $\alpha\geq 0$ represents the selective advantage of $X$ over $Y$.
In \cite{BP15}, Equation \eqref{Bolthausen-Sznitman SDE} is called the $\Lambda$-Wright-Fisher SDE with selection.
The previously mentioned duality \eqref{Equation duality Kingman} in this case is between the solution of \eqref{Bolthausen-Sznitman SDE} with $\alpha=0$ and the associated $\Lambda$-coalescent.
One may wonder whether it is possible to split the individuals in the Moran model with selection into two adversarial subgroups ($X$ versus $Y$), in order to observe the convergence of the proportion of the disadvantaged group $Y$ towards the solution of \eqref{Bolthausen-Sznitman SDE}.
The goal of this work is to answer this question.

\section{Model and main result}

\subsection{Previous results}

We describe more formally the Moran model with selection and the results of Schweinsberg in \cite{S151} and \cite{S152}.

We consider a population of fixed size $N\in\mathbb{N}$.
Each individual dies at rate 1, meaning that its lifetime is an exponential random variable with parameter 1.
At time $0$, the $N$ individuals carry no mutation.
Each of them acquires a mutation that adds up to its current number of mutations at rate $\mu=\mu_N$ that can depend on $N$.
We call the number of mutations carried by an individual its \textit{type}.
When a death occurs, say at time $t$, the individual is instantaneously replaced by a copy of an individual chosen in the population at time $t$, including the one who just died, independently from the past.
The parent is chosen at random proportionally to its fitness at time $t$, as explained below, and the newborn individual then inherits the type of its parent.\\
\indent For all $j\geq 0$ and $t\in\mathbb{R}_+$, we denote by $W_j(t)$ the number of individuals of type $j$ at time $t$ in the population.
The average number of mutations at time $t$ is thus given by
\begin{align*}
	M(t):=\frac{1}{N}\sum_{j\geq 0}jW_j(t).
\end{align*}
Let $s=s_N>0$ be the coefficient of selection and let the fitness of the type $j$ at time $t$ be $\max\left(1+s(j-M(t)),0\right)$.
If a death occurs at time $t$, the probability that a particular individual of type $j$ reproduces is
\begin{align*}
	F_j(t):=\frac{\max\left(1+s(j-M(t)),0\right)}{\sum_{i\geq 0}W_i(t)\max\left(1+s(i-M(t)),0\right)},
\end{align*}
which becomes
\begin{align}\label{Equation F when fitness positive}
	F_j(t)=\frac{1+s(j-M(t))}{N}
\end{align}
when all the fitnesses are positive.
We will recall later on the fact from \cite{S151} that the number of types that have appeared before time $a_NT$ is of order at most $O(k_N)$ with high probability.
Since $sk_N\to 0$ as $N\to\infty$ by Assumption $(A_3)$, the fitnesses for these types are always positive with high probability.
The neutral Moran model corresponds to the case where $s=0$, that is all the individuals have the same probability to reproduce.
We stress that thus defined, in our model, every mutation is beneficial.

Define
\begin{align}\label{definitions of k_N and a_N}
	k_N:=\frac{\log N}{\log(s/\mu)}\qquad\text{and}\qquad a_N:=\frac{\log(s/\mu)}{s},
\end{align}
which are proven in \cite{S151} to be the scaling constants such that in $a_N$ units of time, the difference between the largest type at time $t$ and the largest type at time $t+a_N$ is of order $k_N$.
\begingroup\allowdisplaybreaks
The assumptions on the parameters of the model are the following:
\begin{align*}
	(A_1)&:\hspace{0.5cm}\lim_{N\to\infty}\frac{k_N}{\log(1/s)}=\infty.\\
	(A_2)&:\hspace{0.5cm}\lim_{N\to\infty}\frac{k_N\log k_N}{\log(s/\mu)}=0.\\
	(A_3)&:\hspace{0.5cm}\lim_{N\to\infty}sk_N=0.
\end{align*}
\endgroup
In particular, it implies that $s\to 0$, $k_N,a_N\to\infty$ as $N\to\infty$, and for any $a,b>0$,
\begin{align}\label{mu et s}
	\frac{1}{N^a}\ll\mu\ll s^b.
\end{align}
We refer to \cite{S151} and \cite{S152} for more detailed discussions on these assumptions.

The main results of Schweinsberg in \cite{S151} concern the dynamics of the types distribution in the population as $N\to\infty$.
Theorem 1.4 in \cite{S151} shows that after $a_N$ units of time, the distribution of the types starts looking like that of a Gaussian variable with vanishing variance.
Theorem 1.2 in \cite{S151} states that $M(a_Nt)/k_N$ converges in probability and uniformly on compact sets of $\intervalleoo{0}{1}\cup\intervalleoo{1}{\infty}$ towards a function $m$ that we do not need to describe here.
A similar convergence holds for the difference between the fittest individuals (the highest type alive) and the mean type, as shown in Theorem 1.1 in \cite{S151}.
This fully describes the dynamics of the types distribution as $N\to\infty$ forward in time.
It enabled Scheinsberg in \cite{S152} to show, when looking backward in time, that the genealogy of the process converges in finite distributions towards the Bolthausen-Sznitman coalescent.

\paragraph{Following the fittest type.}
An important result in \cite{S152} is that, when sampling $n$ individuals in the population at time $a_NT$ and looking backwards in time, after $a_N$ units of time, all the individuals essentially share the same type with high probability and it goes on for their ancestors, the common type being the fittest (i.e. largest) type in the population.
This means that after $a_N$ units of time forward, only the individuals that were among the fittests have begotten a non-negligeable offspring.
Hence, Schweinsberg discretises the time at stopping times defined as follows: for all $j\geq 1$, let
\begin{align}\label{Equation definition tau_j}
	\tau_j:=\inf\left\{t\geq 0:W_{j-1}(t)>s/\mu\right\}.
\end{align}
In words, $\tau_j$ is approximately the time after which type $j$ mutations start occuring, making $j-1$ the fittest type in the population at time $\tau_j$; see \cite{S151} Equation (3.16) and the associated discussion.
Note that $s/\mu\to\infty$ as $N\to\infty$ by \eqref{mu et s}, but $\frac{s/\mu}{N}\to 0$.
Roughly speaking, it means that although the largest type represents a positive fraction of the whole population close to $0$, once this type reaches a size $\lceil s/\mu\rceil:=\inf\{n\geq 1:n\geq s/\mu\}$, it starts evolving in a very predictable way, which is the reason why this discretisation is powerful.
In particular, it is when a type $j$ mutation occurs relatively shortly after $\tau_j$ that a large family is likely to descend from it, due to the fact that the fitness is relative to the mean (meaning that individuals mutating faster than usual are getting strongly advantaged for reproducing).
To follow the largest type, we introduce the index
\begin{align}\label{Equation definition j(t)}
	j(t):=\sup\left\{j\geq 1:\tau_j\leq a_Nt\right\}
\end{align}
We stress that the notation $j(t)$ refers to another quantity in \cite{S151,S152}.

\subsection{Adding the weak selection dynamics}

Recall that we want to divide our population into two adversarial subgroups, say $X$ and $Y$, giving a selective advantage to $X$ such that the proportion of $Y$-individuals converges towards the solution of \eqref{Bolthausen-Sznitman SDE} as $N\to\infty$.
It is important to note that this new selection between groups $X$ and $Y$ should leave unchanged the selection between the different types.
Henceforth, we will use the name \textit{type} without further precisions to refer to the number of mutations carried by an individual, never for his group $X$ or $Y$ alone.
Nonetheless we will sometimes use the condensed \textit{type $(Y,j)$} to refer to both the group and type of an individual.

For technical reasons due to the fact that the population takes about $a_N$ units of time to reach the Bolthausen-Sznitman dynamics, we study the proportion of $Y$-individuals starting only from time $\tau_{j(2)}$, when the types' distribution already looks like a Gaussian distribution.
Let $(y_N)_{N\in\mathbb{N}}$ be a sequence in $\intervalleoo{0}{1}$ such that $y_N\to y\in\intervalleoo{0}{1}$ as $N\to\infty$ and $y_N\lceil s/\mu\rceil\in\{1,\cdots,\lceil s/\mu\rceil\}$.
At time $\tau_{j(2)}$, we mark uniformly at random exactly $y_N\lceil s/\mu\rceil$ type $j(2)-1$ individuals to be in the group $Y$.
Each individual of type $j\leq j(2)-1$ is marked with probability $y_N$.
All the others individuals in the population form the group $X$.
The usual reproduction mechanism is left unchanged by the belonging to $X$ or $Y$.
During a reproduction event, the child inherits the group of its parent.

We add a new selection, that we call \textit{weak selection}, operating between groups $X$ and $Y$ as follows.
Set the weak selection coefficient $\alpha\geq 0$, that does not depend on $N$.
Let $Y_{j}(t)$ be the number of $(Y,j)$-individuals at time $t$ for $j\geq j(2)$, and define $X_j(t)$ similarly for the $(X,j)$-individuals.
Every time a $(Y,j)$-individual acquires a $j+1$-th mutation, say at a time $t$, it is instead killed with probability
\begin{align}\label{Equation def proba of killing}
	\frac{\alpha}{q_{j+1}}\cdot\frac{X_j(t)}{X_j(t)+Y_j(t)},
\end{align}
where $q_{j+1}$ is a random variable that will be defined later on, but can be understood as the difference between the largest type and the average type in the population at the time when the type $j+1$ start appearing.
It is measurable with respect to the natural filtration of the population at this time and we suppose independence of these killings with all randomness after this time.
Nonetheless, we will see in the forthcoming Lemma \ref{Lemma bounds Delta with q} that $1/q_{j+1}$ is the natural scaling of the weak selection in order to observe a non-trivial limit as $N\to\infty$.
Each killing is immediately compensated by choosing an individual uniformly at random among the $X_j(t)$ $(X,j)$-individual to give birth to a $(X,j+1)$-individual.
Note that the dynamics of the types thus remain unchanged.

Let $(\mathcal{Y}^N_t)_{t\geq 2}$ be the c\`adl\`ag version of the process which, informally, follows the proportion of $Y$-individuals among the fittest ones, that is
\begin{align*}
	\mathcal{Y}^N_t:=\frac{Y_{j(t)-1}(\tau_{j(t)})}{\lceil s/\mu\rceil}.
\end{align*}

We now state our main result.

\begin{thm}\label{Theorem convergence SDE}
		Given that $\mathcal{Y}^N_2=y_N\in\intervalleoo{0}{1}$, for all $T>2$, the process $(\mathcal{Y}^{N}_t)_{t\in\intervalleff{2}{T}}$ converges weakly in the Skorokhod space towards the unique solution of \eqref{Bolthausen-Sznitman SDE}.
\end{thm}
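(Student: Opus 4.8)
The plan is to identify the limit through the martingale problem associated with \eqref{Bolthausen-Sznitman SDE} and to prove convergence of the generators of the rescaled processes together with tightness. The generator of the target diffusion, acting on $f\in C^2(\intervalleff{0}{1})$, reads
\begin{align*}
	\mathcal{L}f(y)=-\alpha y(1-y)f'(y)+\int_{\intervalleff{0}{1}}\Big[y\big(f((1-p)y+p)-f(y)\big)+(1-y)\big(f((1-p)y)-f(y)\big)\Big]\frac{\mathrm{d}p}{p^2}.
\end{align*}
Although $\frac{\mathrm{d}p}{p^2}$ has infinite mass near $0$, the bracketed term is $O(p^2)$ because the first-order contributions in $p$ cancel (the mean jump under the $u$-randomisation of \eqref{Bolthausen-Sznitman SDE} vanishes), so $\mathcal{L}$ is well-defined. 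Well-posedness of the martingale problem for $\mathcal{L}$---equivalently, uniqueness in law for \eqref{Bolthausen-Sznitman SDE}---is available from \cite{BP15}, so it suffices to show that every subsequential limit of $(\mathcal{Y}^N)$ solves this martingale problem. Since $\mathcal{Y}^N$ is piecewise constant and jumps exactly when $a_Nt$ crosses a value $\tau_{j+1}$, everything reduces to analysing the embedded chain $y_j:=Y_{j-1}(\tau_j)/\lceil s/\mu\rceil$ observed at the rescaled times $\tau_j/a_N$.

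First I would study a single epoch $\intervalleff{\tau_j}{\tau_{j+1}}$, writing the increment $y_{j+1}-y_j$ as the sum of a \emph{sweep} term and a \emph{weak-selection} term. The crucial point, stressed in the construction of the model, is that the labelling into $X$ and $Y$ leaves the dynamics of the types unchanged; hence Schweinsberg's description of the type process in \cite{S151,S152} applies verbatim. In particular, the fittest type $j$ is founded by an early type-$(j-1)$ mutant whose descendants occupy a macroscopic fraction of the type-$j$ pool at $\tau_{j+1}$, and the underlying point process of these fractions, read along the rescaled time, converges to a Poisson process of intensity $\mathrm{d}t\otimes\frac{\mathrm{d}p}{p^2}$ (this is the content behind the Bolthausen--Sznitman limit of \cite{S152}). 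By exchangeability of the labels within a fixed type, the founder carries $Y$ with probability $y_j+o(1)$, so a sweep of mass $p$ sends $y_j$ to $(1-p)y_j+p$ or to $(1-p)y_j$ according to the founder's label: this reproduces precisely the jump kernel of $\mathcal{L}$. For the drift, I would sum the killing events \eqref{Equation def proba of killing} over the epoch: each $(Y,j-1)\to(Y,j)$ mutation is suppressed with probability $\tfrac{\alpha}{q_j}\tfrac{X_{j-1}}{X_{j-1}+Y_{j-1}}$, which biases the type-$j$ pool towards $X$ by an amount proportional to $y_j(1-y_j)$; using Lemma \ref{Lemma bounds Delta with q} to replace $1/q_j$ by the correct scaling and summing over the $O(k_N)$ epochs per unit of rescaled time yields the drift $-\alpha y(1-y)$.

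Combining these two computations gives, for $f\in C^2$, a discrete generator $\mathcal{L}^N f(y_j):=\mathbb{E}\big[f(y_{j+1})-f(y_j)\mid\mathcal{F}_{\tau_j}\big]/\big((\tau_{j+1}-\tau_j)/a_N\big)$ that converges to $\mathcal{L}f$ uniformly in the state, with errors controlled by Schweinsberg's estimates on the number of types ($O(k_N)$), on the deviations of the mean type, and on the family-size tails. Tightness of $(\mathcal{Y}^N)$ in the Skorokhod space follows from Aldous's criterion: the process is $\intervalleff{0}{1}$-valued, and the predictable quadratic variation of its martingale part together with its total drift are bounded on compacts uniformly in $N$, the small sweeps ($p\to0$) being summable thanks to the $O(p^2)$ control. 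Passing to the limit in the associated martingale identity and invoking the uniqueness recalled above identifies every limit point as the solution of \eqref{Bolthausen-Sznitman SDE}, which gives the claimed weak convergence.

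The main obstacle is the single-epoch analysis, and specifically two coupled difficulties: showing that the label proportion within a fixed type stays well-mixed, so that the founder is genuinely $Y$ with probability $y_j+o(1)$ despite the weak-selection killing, which acts asymmetrically on the two groups; and controlling the accumulation of the infinitely many small sweeps ($p$ near $0$) uniformly in $N$. Both require quantitative refinements of the estimates in \cite{S151,S152}, since those results describe the genealogy in the sense of finite-dimensional distributions, whereas here I need joint control of the family sizes, their founders' labels, and the killing drift with errors that remain summable over all $O(k_N)$ epochs simultaneously.
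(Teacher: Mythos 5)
Your proposal follows essentially the same route as the paper: the embedded chain $\mathsf{Y}^N_j$ observed at the times $\tau_j$, a single-epoch decomposition of the increment into a sweep part (family size with law approximating $\mathrm{d}p/p^2$, founder's label uniform among the $\lceil s/\mu\rceil$ fittest) plus a killing drift rescaled via Lemma \ref{Lemma bounds Delta with q}, convergence of the discrete generator (the paper's Lemma \ref{Lemma bound step}), tightness by Aldous' criterion, and identification of limit points through the martingale problem with uniqueness quoted from the literature. The two obstacles you single out are precisely what the paper's toolbox resolves --- the asymmetric effect of the killings is handled by the splitting $Y_j=\check{Y}_j-\check{X}_j$ together with Lemmas \ref{Lemma no early killing}, \ref{control Y} and \ref{Lemma expectation weak selection effect}, and the accumulation of small sweeps by the truncation on $\{S_j\leq\epsilon\}$ with the second-moment bound of Lemma \ref{Lemma moments step proportion} --- so your plan is correct and matches the paper's proof.
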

The strong uniqueness of solutions of \eqref{Bolthausen-Sznitman SDE} is proven in \cite{DL12} Theorem 4.1. 

\paragraph{Organisation of the paper.}
In Section \ref{Section SDE toolbox}, we provide the technical tools for the proof of Theorem \ref{Theorem convergence SDE}.
It is divided into three subsections.

The first one recalls the notation, as well as useful results from \cite{S151} and \cite{S152}.
Proposition \ref{proposition tools} describes the evolution of the types and related quantitites, Lemma \ref{Lemma bounds Delta with q} controls the time lapse between the random discretization step $\tau_{j+1}-\tau_j$ defined in \eqref{Equation definition tau_j}.
Sometimes, an individual will have a much larger number of descendents born between these times.
Lemma \ref{Lemma convergence proportion family} approximates the law of the size of such a large family.

The second subsection introduces some notation and new populations whose dynamics need to be described.
Informally, the population identical to our model except that the most recent killings are cancelled, and the population of $(X,j)$-individuals descending from a recent killing.
They are of interest because one can retrieve the proportion of $Y$-individuals in the original population from these two.
Lemma \ref{Lemma no early killing} shows that with high probability, there is no early type $(Y,j)$ mutation that gets killed from the weak selection mechanism.

The third subsection adapts the techniques of Schweinsberg based on martingales to investigate the fluctuations of different subpopulations.
It is divided into 5 subsubsections, whose organisation is made precise at the beginning of the subsection.
Lemmas \ref{Lemma double killings negligeable} and \ref{variance Z^Y} are technical results that serve to obtain the approximation of the $(Y,j-1)$-individuals when the $(Y,j)$-individuals start appearing in the population, that is given in Lemma \ref{control Y}.
Lemma \ref{variance slow martingales} contains tools to control the numbers $Y$-individuals as well as the number of individuals descending from killings.
They will be used to derive Lemma \ref{Lemma Y after early mutation} that describes the evolution of the proportion of $Y$-individuals when one individual in the population reproduces much more that the others, due to the strong selection.
Lemma \ref{Lemma moments step proportion} shows that, in expectation, in the absence of weak selection and as long as no type $j$ individual appears too close to time $\tau_j$, the proportion of $Y$-individuals remains constant.
The expectation of the effect of the weak selection on the proportions is obtained in Lemma \ref{Lemma expectation weak selection effect}

Section \ref{Section thm SDE} is devoted to the proof of Theorem \ref{Theorem convergence SDE}.
The strategy is the following.
We first establish the tightness of $\mathcal{Y}^N$ in Lemma \ref{Lemma tightness}.
Next, we show in Lemma \ref{Lemma bound step} that the expectation of the increment of the proportion of $Y$-individuals from $j$ to $j+1$ is very close to the generator of the solution of \eqref{Bolthausen-Sznitman SDE}.
We then introduce a martingale problem in Lemma \ref{Lemma martingale problem} which states that any weak limit of $\mathcal{Y}^N$ solves it.
We conclude the argument with Lemma \ref{Lemma sol mg pb is sol SDE}, who states that this weak limit is therefore a solution of \eqref{Bolthausen-Sznitman SDE}.

\section{Toolbox}
\label{Section SDE toolbox}

\subsection{Schweinsberg's setting and notation}

In this subsection, we introduce the notation used in \cite{S151,S152} and we recall some of the results we will need.
Thus, what follows does not directly concern the dynamics of the two groups $X$ and $Y$, but rather that of the types distribution.
Set $T>2$ a positive real number, arbitrarily large.
Fix $\epsilon,\delta\in\intervalleoo{0}{1}$ such that
\begin{align}\label{delta et epsilon}
	\delta<\min\left\{\frac{1}{100},\frac{1}{19T},\epsilon^3\right\},
\end{align}
as required in \cite{S152} Equation (5.1).
We will study the process up to time $a_NT$, and control its behaviour with a probability greater than $1-\epsilon$, with accuracy $\delta$.
We shall denote by $C_i$, $i\in\mathbb{N}$, constants that can depend on $\delta,\epsilon,T$ whereas $C$ will always refer to a constant independent of those parameters, that may vary from line to line. 

We introduce some tools to study the evolution of a type.
Denote $B_j(t),D_j(t)$ respectively the birth-rate and death-rate of a particular individual of type $j$ at time $t\leq a_NT$, that is:
\begin{align}\label{defitinion of Bj, Dj and Gj}
	B_j(t)&:=(N-W_j(t))F_j(t),\\
	D_j(t)&:=\mu+1-W_j(t)F_j(t),\\
	\shortintertext{and define}
	G_j(t)&:=B_j(t)-D_j(t).
\end{align}
The value of $G_j(t)$ is the growth-rate of a particular individual of type $j$ at time $t$.
Thus, as in \cite{S151} and \cite{S152}, we can define for all $j\geq 0$
\begin{align}\label{definition of martingale Z}
	(Z_j(t))_{t\in\intervalleff{0}{a_NT}}:=\left(e^{-\int_0^tG_j(v)\mathrm{d}v}W_j(t)-\int_0^t\mu W_{j-1}(u)e^{-\int_0^uG_j(v)\mathrm{d}v}\mathrm{d}u-W_j(0)\right)_{t\in\intervalleff{0}{a_NT}}.
\end{align}
This is a square integrable martingale, the variance of which is given for $t\in\intervalleff{0}{a_NT}$ by
\begin{align}\label{variance martingale Z}
	\mathrm{Var}(Z_j(t))=\mathbb{E}\left(\int_0^te^{-2\int_0^uG_j(v)\mathrm{d}v}\Big(\mu W_{j-1}(u)+B_j(u)W_j(u)+D_j(u)W_j(u)\Big)\mathrm{d}u\right),
\end{align}
see \cite{S151} Proposition 5.1.
The role of $Z_j(t)$ is to control the fluctuations of $W_j(t)$ as follows: we rewrite \eqref{definition of martingale Z} as
\begin{align*}
	(W_j(t))_{t\in\intervalleff{0}{a_NT}}:=\left(e^{\int_0^tG_j(v)\mathrm{d}v}(W_j(0)+Z_j(t))+\int_0^t\mu W_{j-1}(u)e^{\int_u^tG_j(v)\mathrm{d}v}\mathrm{d}u\right)_{t\in\intervalleff{0}{a_NT}}.
\end{align*}
Then one sees that if $Z_j(t)$ is much smaller that $e^{\int_0^tG_j(v)\mathrm{d}v}$, then describing $W_j(t)$ reduces to describe $e^{\int_0^tG_j(v)\mathrm{d}v}$ and $W_{j-1}(u)$ up to time $t$.
To show that $Z_j$ is small with high probability, the general strategy is to bound its variance given by \eqref{variance Z^Y}. 
Roughly speaking, $Z_j$ is a martingale because $e^{\int_0^t G_j(v)\mathrm{d}v}$ is the expected number of individuals alive at time $t$ in a pure birth process starting from a single individual.
Hence, one sees that $e^{-\int_0^t G_j(v)\mathrm{d}v}W_j(t)$ would be constant in expectation, in the absence of mutations.
The integral in \ref{definition of martingale Z} is exactly the term needed to compensate these mutations and their offspring.
The variance in \eqref{variance martingale Z} follows from stochastic calculus, as shown in \cite{S151} Section 5.

We will often work with variants of the martingales $Z_j$.
We will always admit the fact that they are martingales, the reasons being the same as the one sketched above, as well as the formulas for their variances.

Let $(\mathcal{F}^N_t)_{t\geq 0}$ denote the natural filtration of $(W_j(t),j\geq 0)_{t\geq 0}$.
Using classical arguments on martingales and the fact that $(W_j(t),j\geq 0)_{t\geq 0}$ is a strong Markov process, it is shown in \cite{S151} Corollary 5.3 that if $\sigma<\kappa$ are two stopping times, then $Z_j^{\sigma,\kappa}$ defined for all $t\in\intervalleff{\sigma}{\kappa}$ by:
\begin{align*}
	Z_j^{\sigma,\kappa}(t):=e^{-\int_\sigma^{t\wedge\kappa}G_j(v)\mathrm{d}v}W_j(t\wedge\kappa)-\int_\sigma^{t\wedge\kappa}\mu W_{j-1}(u)e^{-\int_\sigma^uG_j(v)\mathrm{d}v}\mathrm{d}u-W_j(\sigma)
\end{align*}
is a squared integrable martingale with conditional variance
\begin{align*}
	&\mathrm{Var}\left(\left.Z_j^{\sigma,\kappa}(\sigma+t)\right|\mathcal{F}^N_{\sigma}\right)\\
	&\hspace{2cm}=\mathbb{E}\left(\left.\int_\sigma^{(\sigma+t)\wedge\kappa}e^{-2\int_\sigma^uG_j(v)\mathrm{d}v}\Big(\mu W_{j-1}(u)+B_j(u)W_j(u)+D_j(u)W_j(u)\Big)\mathrm{d}u\right|\mathcal{F}^N_{\sigma}\right).
\end{align*}
If $S$ is a subpopulation of individuals of type $j$ at time $\sigma$, for $t\geq\sigma$ we denote by $W_j^S(t)$ the number of individuals of type $j$ (and not above) at time $t$ such that their ancestor at time $\sigma$ is in $S$.
Define
\begin{align*}
	B_j^S(u)&:=(N-W_j^S(u))F_j(u),\\
	D_j^S(u)&:=\mu+1-W_j^S(u)F_j(u),
\end{align*}
and one can define, in the same manner as above (see \cite{S152} Corollary 4.9), the martingales $Z_j^S$, the variance of which being given by
\begin{align}\label{Equation variance martingale}
	\mathrm{Var}\left(\left.Z_j^S(\sigma+t)\right|\mathcal{F}^N_{\sigma}\right)=\mathbb{E}\left(\left.\int_\sigma^{\sigma+t}e^{-2\int_\sigma^uG_j(v)\mathrm{d}v}\Big(B_j^S(u)W_j^S(u)+D_j^S(u)W_j^S(u)\Big)\mathrm{d}u\right|\mathcal{F}^N_{\sigma}\right).
\end{align}

In \cite{S151,S152}, Schweinsberg often distinguishes whether $j$ is greater or smaller than $k_N^*:=\lceil k_N^+-1\rceil$, where
\begin{align*}
	k_N^+:=k_N+\frac{2k_N\log k_N}{\log(s/\mu)}.
\end{align*}
This constant is, roughly speaking, the first type after which the types distribution looks like a Gaussian distribution.
For $j\geq k_N^*+1$, let
\begin{align*}
	&q_j^*:=
	\begin{cases}
		j-k_N & \text{if }a_N-2a_N/k_N\leq\tau_j\leq a_N+2a_N/k_N,\\
		j-M(\tau_j) & \text{otherwise.}
	\end{cases}\\
	&q_j:=\max\{1,q_j^*\}.
\end{align*}
As we mentioned before, all the individuals have type $0$ at time $0$ and the wave dynamics starts approximately around time $a_N$, which is why the condition in the definition of $q_j^*$ is needed in \cite{S151,S152}.
However, we will be mainly interested in types $j\geq j(2)$ for which $q_j^*=j-M(\tau_j)$.
Recall that $\tau_j$ defined in \eqref{Equation definition tau_j} is approximately the time where one expects to see the first type $j$ mutations, hence $q_j$ is an approximation of the difference between $j$ and the average number of mutations when individuals of type $j$ start appearing.
Set
\begin{align}\label{definition of b}
	b:=\log\frac{24000T}{\delta^2\epsilon}.
\end{align}
For $j\geq k_N^*+1$, define
\begin{align}\label{definition xi}
	\xi_j&:=\max\left\{\tau_j,\tau_j+\frac{1}{sq_j}\log\left(\frac{1}{sq_j}\right)+\frac{b}{sq_j}\right\}.
\end{align}
We shall work on a specific event, realized with high probability, such that it holds that $\tau_j<\xi_j<\tau_{j+1}$ for all $j\geq k_N^*+1$ such that $\tau_{j+1}<a_NT$.
The goal of $\xi_j$ is to distinguish whether a mutation is faster than usual: we call a type $j$ mutation an \textit{early type $j$ mutation} if it occurs in the time interval $\intervalleff{\tau_j}{\xi_j}$.
The fitness being relative to the mean, the earlier a mutant is, the stronger is its advantage to reproduce immediately after the mutation.
The individual acquiring an early type $j$ mutation, as well as its offspring, are called \textit{early type $j$ individuals}.
In general, we will speak of early type $j$ individuals during the time interval $\intervalleff{\tau_j}{\tau_{j+1}}$ such that they still have type $j$ when thus called.
Schweinsberg showed that large families appear with the Bolthausen-Sznitman rates as a result of early mutations.

In \cite{S151,S152}, $\zeta=\zeta_N$ denotes a stopping time up to which the estimates on the evolution of the types distribution in the population hold.

Its definition requires a lot of technical considerations that are not relevant for our purposes, we therefore refer to Section 3.3 in \cite{S151} and equation (4.11) in \cite{S152} for equivalent definitions of $\zeta$.
In particular, for $N$ large enough, it holds that $\mathbb{P}(\zeta>a_NT)>1-\epsilon$.
Throughout the paper we will say that a property holds on some event $E$ if it is true for $\mathbb{P}$-almost every $\omega\in E$.
Similarly, if we say that on the event $E$, $\mathbb{E}(\beta)\leq c$ for some random variable $\beta$ and some constant $c$, we mean that $\mathbb{E}(\beta|E)\leq c$.
We shall work on $\{\zeta>a_NT\}$ so that the properties of the next proposition hold.
The results it gathers are from \cite{S151} and \cite{S152} as follows:
\begin{enumerate}[label = \textbullet]
	\item 1. is taken from both Proposition 3.3 point 1 and Proposition 3.6 point 3 in \cite{S151}.
	\item 2. is taken from Proposition 3.3 point 2 in \cite{S151}.
	\item 3. is taken from Proposition 3.3 point 3 \cite{S151}.
	\item 4. is taken from Proposition 4.4 points 1,2,3 in \cite{S152}.
	\item 5. is taken from Lemma 4.5 in \cite{S151}.
\end{enumerate}
	
\begin{prop}\label{proposition tools}
For $N$ large enough, the following hold\footnote{The listed properties hold almost surely under the conditions given in the statements, e.g. for all $j\geq k_N^*+1$ such that $\tau_{j+1}\leq\zeta\wedge a_NT$ means for all $\omega\in\{\tau_{j+1}\leq\zeta\wedge a_NT\}$.}:
\begin{enumerate}[label=\arabic*.]
	\item For all $j\geq k_N^*+1$ such that $\tau_{j+1}\leq \zeta\wedge a_NT$, no early type $j$ individual acquires a $j+1$-th mutation before time $\tau_{j+1}$.
	Furthermore, it holds that
		\begin{align*}
			\frac{a_N}{3k_N}\leq\tau_{j+1}-\tau_j\leq\frac{2a_N}{k_N},
		\end{align*}
		and on $\{\zeta>a_NT\}$, we have $\tau_{J+1}>a_NT$ for $J:=3Tk_N+k_N^*+1$, so the types greater or equal to $J+1$ have not appeared at time $a_NT$ yet.
	\item For all $j\geq k_N^*+1$ and $t\in\intervalleff{\tau_j+\frac{a_N}{4Tk_N}}{\tau_{j+1}}\cap\intervalleff{0}{\zeta\wedge a_NT}$:
		\begin{align*}
			(1-4\delta)e^{\int_{\tau_j}^tG_j(v)\mathrm{d}v}\leq\widetilde{W}_j(t)\leq (1+4\delta)e^{\int_{\tau_j}^tG_j(v)\mathrm{d}v},
		\end{align*}
		where $\widetilde{W}_j$ denote the number of non-early type $j$.
		Moreover, the upper bound holds for all $t\in\intervalleff{\xi_j}{\tau_{j+1}}\cap\intervalleff{0}{\zeta\wedge a_NT}$.
	\item For all $j\geq k_N^*+1$ and $t\in\intervalleff{\tau_{j+1}}{\tau_{\lceil j+\frac{k_N}{4}\rceil}+a_N}\cap\intervalleff{0}{\zeta\wedge a_NT}$:
		\begin{align*}
			(1-\delta)\frac{s}{\mu}e^{\int_{\tau_{j+1}}^tG_j(v)\mathrm{d}v}\leq W_j(t)\leq (1+\delta)\frac{s}{\mu}e^{\int_{\tau_{j+1}}^tG_j(v)\mathrm{d}v}.
		\end{align*}
	\item For all $j\geq k_N^*+1$ and $t\in\intervalleff{\tau_j}{\tau_{j+1}}\cap\intervalleff{0}{\zeta\wedge a_NT}$:
		\begin{align*}
			s(q_j-C_3)\leq &G_j(t)\leq s(q_j+C_3),\\
			sk_N(1-2\delta)\leq &G_j(t)\leq sk_N(e+2\delta),\\
			k_N(1-2\delta)\leq &\quad q_j\ \leq k_N(e+2\delta).
		\end{align*}
	\item For all $j\geq k_N^*+1$ such that $\tau_{j+1}\leq \zeta\wedge a_NT$, we have
		\begin{align*}
			\frac{s}{C_6\mu}\leq e^{\int_{\tau_j}^{\tau_{j+1}}G_j(v)\mathrm{d}v}\leq\frac{2s}{\mu},
		\end{align*}
	%where $C_6\leq 205\frac{Tb}{\epsilon}$.
	for some explicit constant $C_6$.
\end{enumerate}
\end{prop}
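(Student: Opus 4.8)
The plan is to observe first that every quantity appearing in the statement — the counts $W_j$ and $\widetilde{W}_j$, the rates $B_j,D_j,G_j$, the stopping times $\tau_j,\xi_j$, and the constants $k_N^*,q_j$ — depends only on the distribution of \emph{types} (numbers of mutations) in the population, and not at all on the partition into the groups $X$ and $Y$. By construction of our model, each weak-selection killing of a $(Y,j)$-individual that mutates is instantaneously compensated by the birth of an $(X,j+1)$-individual, so the type-count processes $(W_j(t))_{t\geq 0}$ are identical in law to those of the plain Moran model with selection studied by Schweinsberg under the same parameters $\mu_N,s_N$ and the same assumptions $(A_1)$–$(A_3)$. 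Consequently every estimate Schweinsberg proves for the type dynamics transfers verbatim, and the proof reduces to citing the appropriate statements of \cite{S151,S152} and reconciling notation, working throughout on the event $\{\zeta>a_NT\}$, which has probability $>1-\epsilon$ for $N$ large.

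Then I would go through the five points in the order already indicated by the bulleted correspondence preceding the statement. Point 1 combines Proposition 3.3(1) and Proposition 3.6(3) of \cite{S151}: the former gives that no early type-$j$ individual mutates before $\tau_{j+1}$ together with the two-sided bound $\frac{a_N}{3k_N}\leq\tau_{j+1}-\tau_j\leq\frac{2a_N}{k_N}$, while the latter yields $\tau_{J+1}>a_NT$ for $J=3Tk_N+k_N^*+1$, so that indices above $J$ are irrelevant on $\{\zeta>a_NT\}$. Points 2 and 3 are Proposition 3.3(2) and (3): the exponential-growth envelopes for $\widetilde{W}_j$ (the non-early type-$j$ count) on $\intervalleff{\tau_j+\frac{a_N}{4Tk_N}}{\tau_{j+1}}$, and for $W_j$ on the later interval once the type has reached size of order $s/\mu$. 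Point 4 assembles the three two-sided bounds on $G_j$ and on $q_j$ from Proposition 4.4(1,2,3) of \cite{S152}. Point 5 is Lemma 4.5 of \cite{S151}, giving $\frac{s}{C_6\mu}\leq e^{\int_{\tau_j}^{\tau_{j+1}}G_j(v)\mathrm{d}v}\leq\frac{2s}{\mu}$, the statement that over one discretisation step the fittest type multiplies its size by a factor of order $s/\mu$.

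The genuinely careful part — the main obstacle — is the bookkeeping needed to match conventions, which I would dispatch before invoking any citation. Schweinsberg's index functions and his $q_j$ are defined differently (the paper already warns that $j(t)$ and $q_j$ denote other quantities in \cite{S151,S152}), so I would verify that our $k_N^*:=\lceil k_N^+-1\rceil$, our $q_j=\max\{1,q_j^*\}$ with $q_j^*=j-M(\tau_j)$ away from the initial window, our $b=\log\frac{24000T}{\delta^2\epsilon}$, and our $\xi_j$ from \eqref{definition xi} coincide with the objects for which the source estimates are stated, after the relevant index shift. I would also check that our standing choice of $\delta,\epsilon$ obeying \eqref{delta et epsilon} meets the hypotheses of each cited result (in particular \cite{S152} Eq. (5.1)), and that the parameter relations \eqref{mu et s} implied by $(A_1)$–$(A_3)$ are exactly those Schweinsberg assumes. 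The one model-specific verification is that the compensated killings disturb neither the martingale $Z_j$ of \eqref{definition of martingale Z} nor its variance \eqref{variance martingale Z}: since a killing removes a type-$j$ individual and a simultaneous birth adds a type-$(j+1)$ one, the net effect on $W_j$ is exactly that of an ordinary type-$j$-to-$(j+1)$ mutation, already accounted for in the birth/death/mutation bookkeeping underlying $Z_j$, so the estimates built on $Z_j$ are unaffected. Once these identifications are in place, each of the five points is an immediate restatement of its source.
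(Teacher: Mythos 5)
Your proposal is correct and takes essentially the same route as the paper, which offers no independent proof of Proposition \ref{proposition tools}: after noting (as you do) that the compensated killings leave the type-count processes $(W_j)$ distributionally identical to Schweinsberg's model, the paper simply attributes the five points to exactly the sources you cite --- points 1--3 to Propositions 3.3 and 3.6 of \cite{S151}, point 4 to Proposition 4.4 of \cite{S152}, and point 5 to Lemma 4.5 of \cite{S151}. One cosmetic remark: the paper warns only that $j(t)$ (not $q_j$) denotes a different quantity in \cite{S151,S152}, and the definitions of $q_j^*$, $q_j$, $k_N^*$, $b$ and $\xi_j$ are carried over from Schweinsberg unchanged, so less notational reconciliation is needed than you anticipate.
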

In \cite{S151}, Proposition 3.6 point 1 shows that on $\{\zeta>a_NT\}$, $\tau_{k_N^*+1}\leq 2a_N/k_N$ so that $\tau_{j(2)}>\tau_{k_N^*+1}$.
As explained in the introduction, it will be more convenient for us to study the process starting at time $\tau_{j(2)}$.
We also note that by point 1 of Proposition \ref{proposition tools} above and assumption $(A_3)$, on the event $\{\zeta>a_NT\}$, all the fitnesses of the individuals until time $a_NT$ are positive, i.e. \eqref{Equation F when fitness positive} holds, and therefore
\begin{align}\label{Equation G_j with positive fitnesses}
	G_j(t)=s(j-M(t))-\mu,\qquad\forall t\leq a_NT\text{ and }j\leq J,
\end{align}
where $J:=3Tk_N+k_N^*+1$ is from point 1 of the above proposition.
Moreover, for $N$ large enough, on the event $\{\zeta>a_NT\}$, for all $t\geq 0$ and every $j\leq J$, one has
\begin{align}\label{inequality Bj+Dj}
	B_j(t)+D_j(t)
	&=\frac{(N-2W_j(t))(1+s(j-M(t)))}{N}+1+\mu
	\leq 2 +sJ+\mu\nonumber\\
	&\leq 3,
\end{align}
by assumption $(A_3)$.

In \cite{S152}, the study of the process backwards in time requires to consider only types $j$'s that belong to some set $I\subset\mathbb{N}$, defined just before Lemma 6.2 in \cite{S152}.
Its definition involves a fixed parameter $t_0\in\intervalleoo{T-37}{T-2}$.
Choosing $t_0=T-3$, one gets
\begin{align}\label{Definition I}
	&I=\left\{j_1, \cdots, j_2\right\}&\nonumber\\
	\text{with}\qquad &j_1:=\max\{j:\tau_j^*\leq 2a_N\}-\lfloor 9\delta Tk_N\rfloor,\\
	&j_2:=\max\{j:\tau_j^*\leq a_N(T-1+19/k_N)\}+\lfloor 9\delta Tk_N\rfloor\nonumber,
\end{align}
where the $\tau_j^*$'s are some deterministic times, approximating the random $\tau_j$'s (see Equation (6.1) in \cite{S152}).
The relevant informations for our purposes are given by Lemma 6.2 in \cite{S152}, which shows that on the event $\{\zeta>a_NT\}$, it holds that $\tau_{j_1}<2a_N$, and $j_2\geq L+9$, where $L$ is defined in Lemma 5.1 of \cite{S152} as
\begin{align*}
	L:=\inf\{j:\tau_j\geq a_N(T-1)-3a_N/k_N\}.
\end{align*}
It entails that $\tau_{j_2}\geq \tau_L+9a_N/3k_N$ by Proposition \ref{proposition tools} point 1.
Hence, $\tau_{j_2}>a_N(T-1)$.
We thus have that on the event $\{\zeta>a_NT\}$, $j(2)\in I$ and $j(T-1)\in I$, so that for $j(2)\leq j\leq j(T-1)$, we can use the results of Schweinsberg proven for $j\in I$, since then $2a_N\leq \tau_j\leq a_N(T-1)$.
In particular, on the event $\{\zeta>a_NT\}$, the estimates in Proposition \ref{proposition tools} hold for $j\in I$ and we will thus apply the proposition for $j\in I$ without recalling that this ensures $\tau_j\leq a_NT$.

We deduce a result on the time length between $\tau_j$ and $\tau_{j+1}$ that will be useful later on.
\begin{lem}\label{Lemma bounds Delta with q}
		For all $j\in I$, conditionally given $\mathcal{F}^N_{\tau_j}$ and on the event $\{\zeta>\tau_{j+1}\}$, it holds that
		\begin{align*}
			\frac{1-2\delta}{q_j}\leq\frac{\tau_{j+1}-\tau_j}{a_N}\leq \frac{1+2\delta}{q_j}.
		\end{align*}
	\end{lem}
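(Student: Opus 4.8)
The plan is to sandwich the integrated growth rate $\int_{\tau_j}^{\tau_{j+1}}G_j(v)\,\mathrm{d}v$ between two quantities: one coming from the multiplicative control of point 5 of Proposition \ref{proposition tools}, and one from the (nearly constant) pointwise control of $G_j$ on $\intervalleff{\tau_j}{\tau_{j+1}}$ given by point 4. First I would check that for $j\in I$ the event $\{\zeta>\tau_{j+1}\}$ guarantees $\tau_{j+1}\leq\zeta\wedge a_NT$: indeed $\tau_j\leq a_N(T-1)$ for $j\in I$, and point 1 gives $\tau_{j+1}-\tau_j\leq 2a_N/k_N$, so $\tau_{j+1}\leq a_N(T-1)+2a_N/k_N\leq a_NT$ for $N$ large since $k_N\to\infty$. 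Hence every estimate of Proposition \ref{proposition tools} applies pathwise on $\{\zeta>\tau_{j+1}\}$; since moreover $q_j$ is $\mathcal{F}^N_{\tau_j}$-measurable (recall $q_j^*=j-M(\tau_j)$ for $j\geq j(2)$), the conditioning on $\mathcal{F}^N_{\tau_j}$ is harmless and it suffices to prove the two inequalities surely on $\{\zeta>\tau_{j+1}\}$.

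Taking logarithms in point 5 of Proposition \ref{proposition tools} and using the identity $\log(s/\mu)=sa_N$ coming from \eqref{definitions of k_N and a_N} yields
\begin{align*}
	sa_N-\log C_6\leq\int_{\tau_j}^{\tau_{j+1}}G_j(v)\,\mathrm{d}v\leq sa_N+\log 2.
\end{align*}
On the other hand, integrating the bound $s(q_j-C_3)\leq G_j(v)\leq s(q_j+C_3)$ of point 4 over $\intervalleff{\tau_j}{\tau_{j+1}}$ gives
\begin{align*}
	s(q_j-C_3)(\tau_{j+1}-\tau_j)\leq\int_{\tau_j}^{\tau_{j+1}}G_j(v)\,\mathrm{d}v\leq s(q_j+C_3)(\tau_{j+1}-\tau_j).
\end{align*}
Combining the upper bound of the first display with the lower bound of the second, and conversely, then dividing by $a_N$, I obtain
\begin{align*}
	\frac{1}{q_j+C_3}\left(1-\frac{\log C_6}{sa_N}\right)\leq\frac{\tau_{j+1}-\tau_j}{a_N}\leq\frac{1}{q_j-C_3}\left(1+\frac{\log 2}{sa_N}\right).
\end{align*}

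It then remains to absorb the deterministic error factors into the $\delta$-slack. Writing the upper bound as $\frac{1}{q_j}\cdot\frac{q_j}{q_j-C_3}(1+\frac{\log 2}{sa_N})$ and the lower one as $\frac{1}{q_j}\cdot\frac{q_j}{q_j+C_3}(1-\frac{\log C_6}{sa_N})$, I would invoke point 4, which gives $q_j\geq k_N(1-2\delta)$, together with $k_N\to\infty$ and $sa_N=\log(s/\mu)\to\infty$ (both consequences of the assumptions, cf. \eqref{mu et s}). Both correction factors then converge to $1$, so for $N$ large enough $\frac{q_j}{q_j-C_3}(1+\frac{\log 2}{sa_N})\leq 1+2\delta$ and $\frac{q_j}{q_j+C_3}(1-\frac{\log C_6}{sa_N})\geq 1-2\delta$, which is exactly the claimed two-sided estimate. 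I do not expect a genuine obstacle here: the only point requiring minor care is the uniformity in $j\in I$ of this passage to the limit, but it is ensured by the uniform lower bound $q_j\geq k_N(1-2\delta)$ and by the fact that the constants $C_3,C_6$ depend neither on $j$ nor on $N$.
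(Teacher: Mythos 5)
Your proof is correct and follows the same overall strategy as the paper's: sandwich the integrated growth rate $\int_{\tau_j}^{\tau_{j+1}}G_j(v)\,\mathrm{d}v$ between $s(q_j\mp C_3)(\tau_{j+1}-\tau_j)$ via point 4 of Proposition \ref{proposition tools} and a quantity close to $sa_N$, then absorb the errors using $q_j\geq(1-2\delta)k_N\to\infty$. The one ingredient you replace is the source of the integrated bound: where the paper imports Equation (8.32) of \cite{S152}, which gives $(1-\delta)a_N\leq\int_{\tau_j}^{\tau_{j+1}}G_j(v)/s\,\mathrm{d}v\leq(1+\delta)a_N$, you take logarithms in point 5 of Proposition \ref{proposition tools} and use the identity $sa_N=\log(s/\mu)$ to get $sa_N-\log C_6\leq\int_{\tau_j}^{\tau_{j+1}}G_j(v)\,\mathrm{d}v\leq sa_N+\log 2$. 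This is a legitimate substitution; it even yields an asymptotically sharper multiplicative error, $1+O(1/\log(s/\mu))$ in place of $1\pm\delta$, and it stays inside the toolbox already quoted in Proposition \ref{proposition tools} rather than invoking an additional equation from \cite{S152}. One small inaccuracy in your preliminary check: you assert $\tau_j\leq a_N(T-1)$ for every $j\in I$, but the paper only guarantees $2a_N\leq\tau_j\leq a_N(T-1)$ for $j(2)\leq j\leq j(T-1)$, and indeed $\tau_{j_2}>a_N(T-1)$ by construction. This does not damage the argument, since the paper's stated convention is precisely that the estimates of Proposition \ref{proposition tools} may be applied for all $j\in I$ on the event $\{\zeta>a_NT\}$ without further verification, which is all your proof (like the paper's) actually needs.
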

	
	\begin{proof}
		By Proposition \ref{proposition tools} point 4, we know that $\sup_{t\in\intervalleff{\tau_j}{\tau_{j+1}}}|G_j(t)-sq_j|\leq sC_3$.
		Equation (8.32) in \cite{S152} states that
		\begin{align*}
			(1-\delta)a_N\leq\int_{\tau_j}^{\tau_{j+1}}\frac{G_j(v)}{s}\mathrm{d}v\leq (1+\delta)a_N.
		\end{align*}
		Therefore we have that
		\begin{align*}
			\frac{1-\delta}{q_j+C_3}\leq\frac{\tau_{j+1}-\tau_j}{a_N}\leq\frac{1+\delta}{q_j-C_3},
		\end{align*}
		which implies for $N$ large enough that
		\begin{align*}
			\frac{1-2\delta}{q_j}\leq\frac{\tau_{j+1}-\tau_j}{a_N}\leq\frac{1+2\delta}{q_j},
		\end{align*}
		since by Proposition \ref{proposition tools} point 4, $q_j\geq (1-2\delta)k_N\to\infty$ as $N\to\infty$.
	\end{proof}

	We conclude the first subsection of the toolbox with a reformulation of the result of Schweinsberg in \cite{S152} showing that the law of the number of early type $j$ individuals at time $\tau_{j+1}$ can be well approximated by the rates corresponding to Bolthausen-Sznitman coalescent.
	\begin{lem}\label{Lemma convergence proportion family}
		For $N$ large enough, for all $j\in I,$ $j\geq j(2)$, conditionally given $\mathcal{F}^N_{\tau_j}$ and on the event $\{\zeta>\tau_{j+1}\}$, for any $g\in\mathcal{C}^{\infty}(\intervalleff{0}{1})$, it holds that
		\begin{align*}
			\left|q_j\int_{\intervalleoo{\epsilon}{1}}g(x)p_{S_j}(\mathrm{d}x)-\int_\epsilon^{1}g(x)\frac{\mathrm{d}x}{x^2}\right|
			\leq C(||g||_\infty + ||g'||_\infty)\epsilon,
		\end{align*}
		where $S_j$ is the proportion of early type $j$ individuals at time $\tau_{j+1}$ among the type $j$ individuals and $p_{S_j}$ its probability distribution supported on $\{0,1/\lceil s/\mu\rceil,\cdots, 1\}$.
	\end{lem}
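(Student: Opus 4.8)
The plan is to deduce the estimate from Schweinsberg's analysis of the families founded by early mutations, recast as a tail bound for $q_jS_j$, and then to pass to the tested-against-$g$ form by an integration by parts. Writing $F(x):=\mathbb{P}(S_j>x\mid\mathcal{F}^N_{\tau_j})$ and $\bar{F}(x):=\int_x^1 p^{-2}\mathrm{d}p=\frac1x-1$ (the tail of the jump intensity $\mathrm{d}p/p^2$ of \eqref{Bolthausen-Sznitman SDE}), the first step is to establish that, for every $x\in\intervalleoo{\epsilon}{1}$,
\[
\left|q_jF(x)-\bar{F}(x)\right|\le C\frac{\delta}{x}.
\]
This is the content of Schweinsberg's family-size computation in \cite{S152}; heuristically, a type $j$ mutation occurring at time $\tau_j+u$ founds a family whose size at $\tau_{j+1}$ is $W\,e^{\int_{\tau_j+u}^{\tau_{j+1}}G_j(v)\mathrm{d}v}$, where $W$ is the renormalised limit of the associated (super-critical) birth--death process; by Proposition \ref{proposition tools} points 4 and 5 the exponent is $\approx sq_j(\tau_{j+1}-\tau_j-u)$ and $e^{\int_{\tau_j}^{\tau_{j+1}}G_j}\approx s/\mu\approx W_j(\tau_{j+1})$, so the family's proportion is $\approx W\,e^{-sq_ju}$, while the founding mutations arrive at rate $\mu W_{j-1}(\tau_j+u)\approx s\,e^{sq_ju}$ (point 3). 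Integrating this Poisson intensity against the tail of $W$, up to the early-mutation cutoff $\xi_j$ of \eqref{definition xi}, reproduces $\bar{F}$, the $(1\pm\delta)$ and $C_6$ factors of Proposition \ref{proposition tools} being exactly what degrade the identity into a relative error of order $\delta$.

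The second step turns this relative error into a uniform absolute error on $\intervalleoo{\epsilon}{1}$. Since $x>\epsilon$ and $\delta<\epsilon^3$ by \eqref{delta et epsilon}, the bound above gives, for $N$ large enough,
\[
\sup_{x\in\intervalleoo{\epsilon}{1}}\left|q_jF(x)-\bar{F}(x)\right|\le C\frac{\delta}{\epsilon}\le C\epsilon^2\le C\epsilon.
\]
Along the way one checks that the truncation discarded at $\xi_j$ is negligible: with $e^b=24000T/(\delta^2\epsilon)$ from \eqref{definition of b}, the omitted mass is of order $e^{-xe^b}\le e^{-\epsilon e^b}$, super-exponentially small; that replacing the total early proportion $S_j$ by its largest family costs only $O(1/q_j^2)$, since two disjoint early families each exceeding $\epsilon$ occur with probability $O(1/q_j^2)$; and that the atom $\eta:=\mathbb{P}(S_j=1\mid\mathcal{F}^N_{\tau_j})$ satisfies $q_j\eta\to0$. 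All of these are $\le\epsilon$ for $N$ large by Proposition \ref{proposition tools} point 4, which gives $q_j\ge(1-2\delta)k_N\to\infty$.

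The third step is the integration by parts. For $x\in\intervalleoo{\epsilon}{1}$ write $g(x)=g(\epsilon)+\int_\epsilon^x g'(y)\mathrm{d}y$; inserting this and using Fubini,
\[
\int_{\intervalleoo{\epsilon}{1}}g(x)\,p_{S_j}(\mathrm{d}x)=g(\epsilon)\,p_{S_j}(\intervalleoo{\epsilon}{1})+\int_\epsilon^1 g'(y)\,p_{S_j}(\intervalleoo{y}{1})\,\mathrm{d}y,
\]
and likewise $\int_\epsilon^1 g(x)x^{-2}\mathrm{d}x=g(\epsilon)\bar{F}(\epsilon)+\int_\epsilon^1 g'(y)\bar{F}(y)\mathrm{d}y$. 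Since $p_{S_j}(\intervalleoo{y}{1})=F(y)-\eta$, subtracting the two identities and bounding each piece with the uniform estimate of the second step yields
\[
\left|q_j\int_{\intervalleoo{\epsilon}{1}}g\,p_{S_j}-\int_\epsilon^1 g\,\frac{\mathrm{d}x}{x^2}\right|\le\|g\|_\infty|q_jF(\epsilon)-\bar{F}(\epsilon)|+\|g'\|_\infty\int_\epsilon^1|q_jF(y)-\bar{F}(y)|\mathrm{d}y+q_j\eta\|g\|_\infty,
\]
and each term on the right is at most $C(\|g\|_\infty+\|g'\|_\infty)\epsilon$, which is the claim. Note that this argument uses only $g$ and $g'$, so no bound on higher derivatives enters.

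I expect the main obstacle to be the first step, and specifically obtaining a tail bound for $q_jS_j$ sharp enough near the lower endpoint. Because $\bar{F}(x)\sim1/\epsilon$ as $x\downarrow\epsilon$, an additive error of size $\epsilon$ at $x=\epsilon$ demands relative accuracy $\epsilon^2$ there, which is exactly what $\delta<\epsilon^3$ is meant to supply --- but only provided Schweinsberg's approximation is genuinely multiplicative. In particular, the mean-field feedback of the mean fitness $M(t)$ on the growth of a large family --- the mechanism responsible for the $-1$ that distinguishes the Bolthausen--Sznitman density $x^{-2}$ (tail $1/x-1$) from the naive free-birth--death tail $1/x$ --- must be controlled with relative error $O(\delta)$ uniformly in $x\in\intervalleoo{\epsilon}{1}$, and it is this uniform multiplicative control, extracted from \cite{S152}, that does the real work.
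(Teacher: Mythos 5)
Your proposal is correct and takes essentially the same route as the paper: the paper simply quotes Lemma 7.8 of \cite{S152} for the tail bound $\left|q_jp_{S_j}(\intervalleof{y}{1})-\nu(\intervalleoo{y}{1})\right|\leq 14\delta\nu(\intervalleof{y}{1})\leq 14\delta/\epsilon$ (your first step is precisely the content of that cited lemma, not something to re-derive), then performs your integration by parts against $g$ and concludes via $\delta/\epsilon<\epsilon$ from \eqref{delta et epsilon}. The only cosmetic difference is that the paper's quoted bound holds on $\intervalleof{\epsilon}{1-\delta}$ with the strip near $1$ bounded crudely by $\int_{1-\delta}^1(q_jp_{S_j}+\nu)(\intervalleof{1-\delta}{1})\mathrm{d}y$, whereas you handle the endpoint through the atom $\eta$ with $q_j\eta\leq C\delta$ --- both dispositions are equivalent.
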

	
	\begin{proof}
		Let $\nu(\mathrm{d}x)=\mathrm{d}x/x^2$, $x\in\intervalleof{0}{1}$.
		Lemma 7.8 in \cite{S152} shows that for all $y\in\intervalleof{\epsilon}{1-\delta}$, it holds that
		\begin{align}\label{Equation approx p_S with nu}
			\left|q_j p_{S_j}(\intervalleof{y}{1})-\nu(\intervalleoo{y}{1})\right|\leq 14\delta\nu(\intervalleof{y}{1})\leq 14\frac{\delta}{\epsilon}.
		\end{align}
		We implicitely used that the event in \cite{S152} equation (7.48) has probability going to $1$ as $N\to\infty$, see Lemmas 7.4 and 7.7 of the same paper.
		Roughly speaking, on this event the early mutants are coupled with a branching process introduced in Section 7.2 of the same paper, allowing to approximate the law of $S_j$.
		We write
		\begin{align*}
			&\left|q_j\int_{\intervalleoo{\epsilon}{1}}g(x)p_{S_j}(\mathrm{d}x)-\int_\epsilon^{1}g(x)\nu(\mathrm{d}x)\right|
			=\left|\int_{\epsilon}^{1}\mathrm{d}yg'(y)(q_jp_{S_j}-\nu)(\intervalleof{y}{1})-g(\epsilon)(q_jp_{S_j}-\nu)(\intervalleof{\epsilon}{1})\right|\\
			&\hspace{0.7cm}\leq 14\frac{\delta}{\epsilon}||g'||_\infty+||g'||_\infty\int_{1-\delta}^1\mathrm{d}y(q_jp_{S_j}+\nu)(\intervalleof{1-\delta}{1}))+14\frac{\delta}{\epsilon}||g||_\infty
			\leq C\frac{\delta}{\epsilon}(||g||_\infty+||g'||_\infty).
		\end{align*}
		We conclude using that $\delta/\epsilon<\epsilon$ by \eqref{delta et epsilon}.
	\end{proof}

\subsection{Splitting strategy to study the weak selection}

Recall that, among the type $j(2)-1$ individuals at time $\tau_{j(2)}$, we assigned $y_N\lceil s/\mu\rceil$ of them to group $Y$, and $(1-y_N)\lceil s/\mu\rceil$ to group $X$, with the weak selection mechanism explained above Theorem \ref{Theorem convergence SDE}.
One sees when $\alpha=0$ that
\begin{align*}
	\left\{W_j(t):t\leq a_NT,j\geq 0\right\}=\left\{X_j(t)+Y_j(t):t\leq a_NT,j\geq 0\right\}
\end{align*}
is exactly the model of Schweinsberg.
Moreover, when $\alpha\neq 0$, the types distribution remains unchanged (only the genealogy is altered).
To make the proofs, we will study the fluctuations of each group as if there was no weak selection and then combine it with estimates on the number of killings.
We thus introduce the notation $\check{Y}_j(t)$ for $j\in I$, $j\geq j(2)$ and $t\in\intervalleff{\tau_j}{\tau_{j+1}}$ for the number of $(Y,j)$-individuals if we had cancelled the killings of the weak selection previously described between $\intervalleff{\tau_j}{\tau_{j+1}}$, and only those ones\footnote{Note that $\check{Y}_j$ does not correspond to $Y_j$ in a population with $\alpha=0$, since the killings of type $j'\leq j$ individuals that occured before time $\tau_j$ are kept when counting $\check{Y}_j$.}.
In particular, for $t\in\intervalleff{\tau_j}{\tau_{j+1}}$, denoting $\check{X}_j(t)$ the total number of individuals at time $t$ descending from killings between $\intervalleff{\tau_j}{t}$, one can write $Y_j(t)=\check{Y}_j(t)-\check{X}_j(t)$.
Hence, our strategy is to control $\check{X}_j(t)$ and $\check{Y}_j(t)$ separately before combining them to obtain control on $Y_j(t)$.

To obtain \eqref{Bolthausen-Sznitman SDE}, the weak selection should have asymptotically no effect on the strong selection.
We define
\begin{align}\label{Equation def tau_j prime}
	\tau_j':=\tau_j+\frac{3}{sq_j}\log\frac{1}{sq_j},
\end{align}
and note that $\tau_j<\xi_j<\tau_j'<\tau_{j+1}$ by Proposition \ref{proposition tools} point 4 and Assumption $(A_3)$.
Up to this time, Schweinsberg in \cite{S151} is able to couple the early type $j$ individuals and their progeny with a branching process.
We will only need to use estimates he derived at this time $\tau_j'$.
Let $E_j$ be the event that a $Y$-individual is killed by the weak selection during an early type $j$ mutation in $\intervalleff{\tau_j}{\xi_j}$, and that the resulting $(X,j)$-individual has descendents that are alive at time $\tau_j'$.
We complete the filtration $\mathcal{F}^N$ to take into account the groups $X$ and $Y$ of the individuals.
The following lemma shows that these problematic events have negligeable probabilities to occur.

\begin{lem}\label{Lemma no early killing}
	For all $j\in I$, $j\geq j(2)$, on the event $\{\zeta>\tau_j\}$, it holds that
	\begin{align*}
		\mathbb{P}\left(E_j|\mathcal{F}^N_{\tau_j}\right)\leq\frac{C}{\epsilon q_j^2}.
	\end{align*}
\end{lem}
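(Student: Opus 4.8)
The plan is to control $E_j$ by a first-moment (union-bound) estimate. Let $N_j$ denote the number of early type $j$ mutations carried by $Y$-individuals during $\intervalleff{\tau_j}{\xi_j}$ that are killed by the weak selection and whose compensating $(X,j)$-individual still has descendants alive at $\tau_j'$. Since $E_j=\{N_j\geq 1\}$, Markov's inequality gives $\mathbb{P}(E_j\mid\mathcal{F}^N_{\tau_j})\leq\mathbb{E}(N_j\mid\mathcal{F}^N_{\tau_j})$, so it suffices to bound the latter. Each $(Y,j-1)$-individual mutates at rate $\mu$, is killed upon mutating with probability at most $\alpha/q_j$ by \eqref{Equation def proba of killing} (bounding the fraction $X_{j-1}/(X_{j-1}+Y_{j-1})\leq 1$), and by assumption these killings are independent of the subsequent evolution. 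Bounding also $Y_{j-1}(t)\leq W_{j-1}(t)$ and using the tower property, I would write
\begin{align*}
\mathbb{E}(N_j\mid\mathcal{F}^N_{\tau_j})\leq\frac{\alpha}{q_j}\,\mathbb{E}\left(\left.\int_{\tau_j}^{\xi_j}\mu\, W_{j-1}(t)\,\rho_j(t)\,\mathrm{d}t\,\right|\mathcal{F}^N_{\tau_j}\right),
\end{align*}
where $\rho_j(t)$ is the probability that a type $j$ individual born at time $t$ still has type $j$ descendants alive at $\tau_j'$.

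The crucial input is the survival probability $\rho_j(t)$. The compensating $(X,j)$-individual is born in $\intervalleff{\tau_j}{\xi_j}$, hence is itself an early type $j$ individual; by Proposition \ref{proposition tools} point 1 it does not mutate to type $j+1$ before $\tau_{j+1}>\tau_j'$, so up to $\tau_j'$ its type $j$ progeny evolves as a supercritical birth-death process whose net growth rate satisfies $G_j(v)\in s\intervalleff{q_j-C_3}{q_j+C_3}$ by point 4. This is precisely the branching process with which Schweinsberg couples the early mutants up to $\tau_j'$. Its survival probability to an elapsed time $\Delta$ is of order $sq_j/(1-e^{-sq_j\Delta})$, decreasing in $\Delta$; since $\tau_j'-t\geq\tau_j'-\xi_j$ for $t\leq\xi_j$ and, by \eqref{Equation def tau_j prime} and \eqref{definition xi}, $sq_j(\tau_j'-\xi_j)=2\log(1/(sq_j))-b$ yields $e^{-sq_j(\tau_j'-\xi_j)}=(sq_j)^2e^{b}\to 0$, I obtain $\rho_j(t)\leq Csq_j$ uniformly on the window. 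For the integrand, Proposition \ref{proposition tools} point 3 applied to type $j-1$ gives $W_{j-1}(t)\leq(1+\delta)\tfrac{s}{\mu}e^{\int_{\tau_j}^t G_{j-1}(v)\mathrm{d}v}$, and point 4 gives $G_{j-1}(v)\leq s(q_{j-1}+C_3)$, so $W_{j-1}$ grows at most exponentially at rate $\approx sq_j$ from its value $\approx s/\mu$ at $\tau_j$.

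Substituting these two estimates and using $sq_j(\xi_j-\tau_j)=\log(1/(sq_j))+b$ from \eqref{definition xi}, the $\mu$ cancels against the prefactor $s/\mu$ of $W_{j-1}$ and the integral collapses to
\begin{align*}
\frac{\alpha}{q_j}\int_{\tau_j}^{\xi_j}\mu\, W_{j-1}(t)\,\rho_j(t)\,\mathrm{d}t
\leq C\alpha s^2\int_{\tau_j}^{\xi_j}e^{sq_j(t-\tau_j)}\,\mathrm{d}t
\leq\frac{C\alpha s}{q_j}\,e^{sq_j(\xi_j-\tau_j)}
=\frac{C\alpha}{q_j^2}\,e^{b}.
\end{align*}
Since $e^{b}=24000T/(\delta^2\epsilon)$ by \eqref{definition of b}, the exponential factor carries exactly one power of $1/\epsilon$, and absorbing the fixed factors $\alpha$, $T$, $\delta$ into the constant gives the claimed bound $C/(\epsilon q_j^2)$.

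I expect the main obstacle to be the survival-probability estimate $\rho_j(t)\leq Csq_j$, which must hold \emph{uniformly} over $t\in\intervalleff{\tau_j}{\xi_j}$: this requires invoking Schweinsberg's coupling of early mutants with a supercritical branching process valid up to $\tau_j'$, and crucially exploiting that the window ends at $\xi_j$, which sits far enough below $\tau_j'$ (a gap of order $\tfrac{2}{sq_j}\log(1/(sq_j))$) that the finite-time survival probability has already relaxed to its asymptotic order $sq_j$ rather than staying close to $1$. The remaining care is bookkeeping: verifying that $q_j$ is $\mathcal{F}^N_{\tau_j}$-measurable so that it factors out of the conditional expectation, that the killing decisions are genuinely independent of the post-mutation lineage so that the rate-times-survival factorisation is legitimate, and that all the estimates of Proposition \ref{proposition tools} are applied on $\{\zeta>\tau_{j+1}\}$ where they are guaranteed.
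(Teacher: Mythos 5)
Your proposal is correct, and its top-level structure coincides with the paper's: bound $\mathbb{P}(E_j|\mathcal{F}^N_{\tau_j})$ by the product of the killing probability $\alpha/q_j$ (from \eqref{Equation def proba of killing}, bounding the fraction by $1$) and the probability that an early type $j$ mutant has descendants alive at $\tau_j'$, the factorisation being legitimate by the assumed independence of the killing decisions from all subsequent randomness. Where you differ is in how the second factor is obtained. The paper's proof is two lines: it simply cites Lemma 7.8 of \cite{S152}, which bounds the probability of a surviving early mutation by $Ce^b/q_j$, so that with $e^b=24000T/(\delta^2\epsilon)$ from \eqref{definition of b} one gets $C\frac{1}{\epsilon q_j}\cdot\frac{\alpha}{q_j}\leq\frac{C}{\epsilon q_j^2}$. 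You instead re-derive that input from first principles via a first-moment estimate: mutation rate $\mu W_{j-1}(t)$ controlled by Proposition \ref{proposition tools} point 3, uniform survival bound $\rho_j(t)\leq Csq_j$ on $\intervalleff{\tau_j}{\xi_j}$ because $sq_j(\tau_j'-\xi_j)=2\log(1/(sq_j))-b\to\infty$ by \eqref{Equation def tau_j prime} and \eqref{definition xi}, and the integral computation collapsing to $C\alpha e^b/q_j^2$. Your arithmetic checks out (in particular $sq_j(\xi_j-\tau_j)=\log(1/(sq_j))+b$ and the cancellation of $\mu$ against $s/\mu$), and you correctly flag that the survival estimate is exactly where Schweinsberg's branching-process coupling must be invoked --- that is precisely the content of the proof of Lemma 7.8 in \cite{S152}, so in effect you have reproved the cited lemma rather than used it. What the paper's route buys is brevity; what yours buys is a self-contained account that makes visible \emph{why} the early window must end at $\xi_j$, well below $\tau_j'$: the gap of order $\frac{2}{sq_j}\log\frac{1}{sq_j}$ lets the finite-time survival probability relax to its asymptotic order $sq_j$, a mechanism the citation hides. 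Two small bookkeeping points: on $\intervalleff{\tau_j}{\xi_j}$ you should bound $G_{j-1}$ via $G_{j-1}(v)=G_j(v)-s$ from \eqref{Equation G_j with positive fitnesses} combined with point 4 applied to $G_j$, since point 4 applied directly to type $j-1$ only covers $\intervalleff{\tau_{j-1}}{\tau_j}$; and absorbing the factor $T/\delta^2$ from $e^b$ into $C$ technically conflicts with the paper's convention that $C$ be independent of $\delta,\epsilon,T$ --- but the paper's own proof commits the same abuse, so this is not a defect of your argument relative to the original.
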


\begin{proof}
	By independence, the probability of $E_j$ is the product of the probabilities of a surviving early mutation and a killing, the former being upper bounded by Lemma 7.8 in \cite{S152}.
	By combining this bound and \eqref{Equation def proba of killing}, we obtain
	\begin{align*}
		\mathbb{P}\left(E_j|\mathcal{F}^N_{\tau_j},\{\zeta>\tau_j\}\right)
		&\leq C\frac{1}{\epsilon q_j}\frac{\alpha}{q_j}
		\leq \frac{C}{\epsilon q_j^2},
	\end{align*}
	as claimed.
\end{proof}

From Lemma \ref{Lemma no early killing}, we see that
\begin{align*}
	\mathbb{P}\bigg(\bigcup_{\substack{j\in I \\ j\geq j(2)}}E_j\Big|\mathcal{F}_{\tau_j}^N,\{\zeta>a_NT\}\bigg)
	\leq \sum_{\substack{j\in I \\ j\geq j(2)}}\frac{C}{\epsilon q_j^2}
	\leq \frac{CT}{\epsilon k_N}\to 0,
\end{align*}
as $N\to\infty$, where we used that the number of elements in $I$ is smaller than $J\leq 4k_NT$, see the discussion after \eqref{Definition I}.
Hence, we redefine $\zeta$ to include the first time at which an event $E_j$ occurs and with this new definition, one can still choose $N$ large enough such that $\mathbb{P}(\zeta>a_N(T-1))> 1-\epsilon$, in particular no $E_j$ occurs for any $j\in I$ with high probability.

\subsection{Expected fluctuations of the proportions}

We divide this section into 5 parts:
in the first subsection, we will look at the effect of the weak selection on type $(j-1)$, that is the second fittest type during $\intervalleff{\tau_j}{\tau_{j+1}}$ when the fittest type $j$ starts building up.
Then, in the second subsection, we will study the non-early type $(\check{Y},j)$-individuals during $\intervalleff{\xi_j}{\tau_{j+1}}$
In the third subsection we will describe the impact on the proportion of $\check{Y}$ individuals of an early mutation.
In the fourth subsection, we will introduce the discrete process idexed by $j$ following the proportion of $(Y,j)$ individuals at time $\tau_j$.
The importance of weak selection, that is the expected number of killings of $(Y,j)$-individuals occuring in $\intervalleff{\tau_j}{\tau_{j+1}}$ will be discussed in the fifth subsection.

\subsubsection{The type $j-1$ during $\intervalleff{\tau_j}{\tau_{j+1}}$}

For $t\geq \tau_j$, let $\check{X}_{j-1}^{\tau_j}(t)$ be the number of $(X,j-1)$-individuals at time $t\geq\tau_j$ descending from a killing that occured after time $\tau_j$. 
In order to properly estimate $\check{X}_j(\tau_{j+1})$, one needs to control $\check{X}_{j-1}^{\tau_j}(t)$ for all $t\in\intervalleff{\tau_j}{\tau_{j+1}}$, since type $j$ individuals can come from mutants of these type $j-1$ individuals.
The next lemma enables us to do so.

\begin{lem}\label{Lemma double killings negligeable}
	For $N$ large enough, on the event $\{\zeta>\tau_{j+1}\}$, for all $j\in I$, $j\geq j(2)$, with probability $1-o(\sqrt{\mu})$, it holds that
	\begin{align*}
		\sup_{t\in\intervalleff{\tau_j}{\tau_{j+1}}}\frac{\check{X}_{j-1}^{\tau_j}(t)}{W_{j-1}(t)}\leq\frac{3\alpha}{q_{j-1}}.
	\end{align*}
\end{lem}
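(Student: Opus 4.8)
The plan is to treat $\check{X}_{j-1}^{\tau_j}$ as a type $j-1$ subpopulation whose members are born not from mutations of type $j-2$ but from the killings of the weak selection, and to control it with the same martingale machinery used for $W_j$ in \eqref{definition of martingale Z}. Since the reproduction mechanism is blind to the group label, every individual counted by $\check{X}_{j-1}^{\tau_j}$ dies and reproduces at the type $j-1$ rates, so $\check{X}_{j-1}^{\tau_j}$ grows at per-capita rate $G_{j-1}$; the only inflow is the creation of fresh $(X,j-1)$-individuals, which happens precisely when a $(Y,j-2)$-individual attempting its $(j-1)$-th mutation is killed. By \eqref{Equation def proba of killing} this inflow occurs at rate $\kappa(u):=\mu Y_{j-2}(u)\frac{\alpha}{q_{j-1}}\frac{X_{j-2}(u)}{W_{j-2}(u)}\leq\frac{\alpha\mu}{q_{j-1}}W_{j-2}(u)$. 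Writing the analogue of \eqref{definition of martingale Z} with $\check{X}_{j-1}^{\tau_j}(\tau_j)=0$ gives the representation $\check{X}_{j-1}^{\tau_j}(t)=e^{\int_{\tau_j}^t G_{j-1}(v)\mathrm{d}v}Z(t)+\int_{\tau_j}^t\kappa(u)e^{\int_u^t G_{j-1}(v)\mathrm{d}v}\mathrm{d}u$ for a mean-zero square-integrable martingale $Z$.

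First I would bound the drift term. Dividing by $W_{j-1}(t)\geq(1-\delta)\frac{s}{\mu}e^{\int_{\tau_j}^t G_{j-1}(v)\mathrm{d}v}$ (Proposition \ref{proposition tools} point 3, shifted), inserting $W_{j-2}(u)\leq(1+\delta)\frac{s}{\mu}e^{\int_{\tau_{j-1}}^u G_{j-2}(v)\mathrm{d}v}$, and using $G_{j-2}=G_{j-1}-s$ from \eqref{Equation G_j with positive fitnesses} together with $e^{\int_{\tau_{j-1}}^{\tau_j}G_{j-2}(v)\mathrm{d}v}\leq\frac{2s}{\mu}$ (point 5, shifted), the exponentials telescope: the exponent reduces to $\int_{\tau_{j-1}}^{\tau_j}G_{j-2}(v)\mathrm{d}v-s(u-\tau_j)$, and the remaining integral is $\int_{\tau_j}^t e^{-s(u-\tau_j)}\mathrm{d}u\leq 1/s$. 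This yields the drift bound $\frac{2(1+\delta)}{1-\delta}\frac{\alpha}{q_{j-1}}$, which is strictly below $\frac{3\alpha}{q_{j-1}}$ for $\delta<1/100$, leaving a margin of order $\frac{\alpha}{q_{j-1}}$ for the fluctuations.

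It then remains to show that the martingale contribution $\frac{\mu}{(1-\delta)s}|Z(t)|$ stays below, say, $\frac{\alpha}{2q_{j-1}}$ with probability $1-o(\sqrt{\mu})$. I would estimate $\mathrm{Var}(Z(\tau_{j+1}))$ from the variance formula \eqref{Equation variance martingale}: its integrand combines the birth--death term, bounded by $3\check{X}_{j-1}^{\tau_j}(u)$ via \eqref{inequality Bj+Dj}, and the inflow $\kappa(u)$, each weighted by $e^{-2\int_{\tau_j}^u G_{j-1}(v)\mathrm{d}v}$. Replacing $\check{X}_{j-1}^{\tau_j}(u)$ by its expectation, which equals the drift and is thus $\leq\frac{C\alpha s}{q_{j-1}\mu}e^{\int_{\tau_j}^u G_{j-1}(v)\mathrm{d}v}$, and using $G_{j-1}(v)\geq\frac{1}{2}sq_{j-1}$ (point 4, for $N$ large) so that $\int_{\tau_j}^{\tau_{j+1}}e^{-\int_{\tau_j}^u G_{j-1}(v)\mathrm{d}v}\mathrm{d}u\leq C/(sq_{j-1})$, both contributions are of order $\frac{\alpha}{q_{j-1}^2\mu}$, giving $\mathrm{Var}(Z(\tau_{j+1}))\leq\frac{C\alpha}{q_{j-1}^2\mu}$. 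By Doob's $L^2$ maximal inequality and Chebyshev, the probability that $\sup_t\frac{\mu}{(1-\delta)s}|Z(t)|$ exceeds $\frac{\alpha}{2q_{j-1}}$ is at most $\frac{C\mu^2 q_{j-1}^2}{s^2\alpha^2}\mathrm{Var}(Z)=O(\mu/s^2)$, which is $o(\sqrt{\mu})$ since $\mu\ll s^b$ for every $b$ by \eqref{mu et s} (take $b>4$). On the complement, drift plus fluctuation give $\sup_t\check{X}_{j-1}^{\tau_j}(t)/W_{j-1}(t)\leq\frac{2(1+\delta)}{1-\delta}\frac{\alpha}{q_{j-1}}+\frac{\alpha}{2q_{j-1}}\leq\frac{3\alpha}{q_{j-1}}$.

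The main obstacle I anticipate is the variance estimate, since the quadratic variation in \eqref{Equation variance martingale} features the random process $\check{X}_{j-1}^{\tau_j}$ itself; closing this requires the bootstrap of feeding the deterministic drift bound back into the integrand, and then verifying that the resulting order $\mu/s^2$ is genuinely $o(\sqrt{\mu})$, for which the separation $\mu\ll s^b$ from \eqref{mu et s} is exactly what is needed. A secondary point to check is that all estimates of Proposition \ref{proposition tools} are available for types $j-1$ and $j-2$, which holds because $j-2\geq j_1$ for $j\geq j(2)$, so these indices remain in the admissible range.
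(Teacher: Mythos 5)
Your proposal is correct and takes essentially the same route as the paper's proof: the same martingale decomposition of $\check{X}_{j-1}^{\tau_j}$ (your $Z$ is exactly the paper's $\check{Z}_{j-1}^{\tau_j}$), the same use of Proposition \ref{proposition tools} points 3--5 together with $G_{j-1}=G_{j-2}+s$ to bound the drift by $\tfrac{2(1+\delta)}{1-\delta}\tfrac{\alpha}{q_{j-1}}<\tfrac{3\alpha}{q_{j-1}}$, and the same bootstrap of the drift bound into the variance integrand followed by Doob's $L^2$ maximal inequality, your cruder variance bound $C\alpha/(q_{j-1}^2\mu)$ (you discard the extra factor $e^{-s(\tau_j-\tau_{j-1})}$ that the paper keeps) still giving $O(\mu/s^2)=o(\sqrt{\mu})$. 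The only points to tidy in a full write-up are to run the expectation/variance bootstrap with processes stopped at $\zeta$ (as the paper does via $t\wedge\zeta$, since the estimates of Proposition \ref{proposition tools} are only available pre-$\zeta$ — an issue you explicitly flag), and to note that your Chebyshev constant carries a factor $1/\alpha$, which is harmless since $\alpha$ is fixed and the case $\alpha=0$ is trivial.
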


\begin{proof}
	We essentially use the same techniques as in \cite{S151}.
	We admit the two following statements without proof, referring to Section 5 in \cite{S151} for details on how to prove them: For all $j\in I$, $j\geq j(2)$, we have that
	\begin{enumerate}[label = \textbullet]
		\item the process defined for $t<\tau_j$ by $Z_{j-1}^{\tau_j}(t)=0$ and for $t\geq\tau_j$ by
		\begin{align}\label{Equation def check Z}
			\check{Z}_{j-1}^{\tau_j}(t)
			:=e^{-\int_{\tau_j}^t G_{j-1}(v)\mathrm{d}v}\check{X}_{j-1}^{\tau_j}(t)-\int_{\tau_j}^t e^{-\int_{\tau_j}^u G_{j-1}(v)\mathrm{d}v}\mu\frac{\alpha}{q_{j-1}}\frac{X_{j-2}(u)Y_{j-2}(u)}{W_{j-2}(u)}\mathrm{d}u
		\end{align}
		is a mean zero, square integrable martingale;
		\item its conditional variance is given by
		\begin{align}\label{Equation cond var check Z}
			\mathrm{Var}\left(\check{Z}_{j-1}^{\tau_j}(t\wedge \zeta)|\mathcal{F}^N_{\tau_j}\right)
			&=\mathbb{E}\Bigg(\mathds{1}_{\{\zeta>\tau_j\}}\int_{\tau_j}^{t\wedge\zeta}\mathrm{d}u e^{-2\int_{\tau_j}^u G_{j-1}(v)\mathrm{d}v}\nonumber\\
			&\hspace{0.2cm}\times\bigg(\mu\frac{\alpha}{q_{j-1}}\frac{X_{j-2}(u)Y_{j-2}(u)}{W_{j-2}(u)}
			+(B_{j-1}(u)+D_{j-1}(u))\check{X}_{j-1}^{\tau_j}(u)\bigg)\bigg|\mathcal{F}^N_{\tau_j}\Bigg).
		\end{align}
	\end{enumerate}
	The first step of the proof is to bound \eqref{Equation cond var check Z}.
	Since $X_{j-2}(t)$ and $Y_{j-2}(t)$ are smaller than $W_{j-2}(t)$ by definition, Proposition \ref{proposition tools} point 3 then point 5 entail that
	\begin{align*}
		&\mathbb{E}\left(\mathds{1}_{\{\zeta>\tau_j\}}\int_{\tau_j}^{t\wedge\zeta}\mathrm{d}u e^{-2\int_{\tau_j}^u G_{j-1}(v)\mathrm{d}v}\mu\frac{\alpha}{q_{j-1}}\frac{X_{j-2}(u)Y_{j-2}(u)}{W_{j-2}(u)}\bigg|\mathcal{F}^N_{\tau_j}\right)\\
		&\hspace{3cm}\leq\mathbb{E}\left(\mathds{1}_{\{\zeta>\tau_j\}}\int_{\tau_j}^{t\wedge\zeta}\mathrm{d}u e^{-2\int_{\tau_j}^u G_{j-1}(v)\mathrm{d}v}(1+\delta)s\frac{\alpha}{q_{j-1}}e^{\int_{\tau_{j-1}}^{u}G_{j-2}(v)\mathrm{d}v}\bigg|\mathcal{F}^N_{\tau_j}\right)\\
		&\hspace{3cm}\leq\mathbb{E}\left(\mathds{1}_{\{\zeta>\tau_j\}}\int_{\tau_j}^{t\wedge\zeta}\mathrm{d}u(1+\delta)\frac{2s^2}{\mu}\frac{\alpha}{q_{j-1}}e^{-\int_{\tau_j}^u G_j(v)\mathrm{d}v}e^{-s(\tau_j-\tau_{j-1})}\bigg|\mathcal{F}^N_{\tau_j}\right),
	\end{align*}
	where we used that $G_{j-1}(v)=G_{j-2}(v)+s$ for all $v\in\intervalleff{\tau_j}{\tau_{j+1}\wedge\zeta}$ \eqref{Equation G_j with positive fitnesses}.
	Proposition \ref{proposition tools} point 4 allows us to write $\int_{\tau_j\wedge\zeta}^{t\wedge\zeta}e^{-\int_{\tau_j}^u G_j(v)\mathrm{d}v}\mathrm{d}u\leq \frac{1}{s(q_j-C_3)}(1-e^{-s(q_j-C_3)(t\wedge\zeta-\tau_j\wedge\zeta)})\leq\frac{2}{sq_j}$ since $s\to 0$ as $N\to\infty$.
	Hence, we have that
	\begin{align}\label{Equation bound in comput var check Z}
		\mathbb{E}\left(\mathds{1}_{\{\zeta>\tau_j\}}\int_{\tau_j}^{t\wedge\zeta}\mathrm{d}u e^{-2\int_{\tau_j}^u G_{j-1}(v)\mathrm{d}v}\mu\frac{\alpha}{q_{j-1}}\frac{X_{j-2}(u)Y_{j-2}(u)}{W_{j-2}(u)}\bigg|\mathcal{F}^N_{\tau_j}\right)
		\leq C\frac{s}{\mu}\frac{e^{-s(\tau_j-\tau_{j-1})}}{q_jq_{j-1}}.
	\end{align}
	On the other hand, $B_{j-1}(u)+D_{j-1}(u)\leq 3$ for all $u\geq \tau_{j-1}$ by \eqref{inequality Bj+Dj}.
	We apply Proposition \ref{proposition tools} point 4, then use \eqref{Equation def check Z} and the martingale property of $\check{Z}_{j-1}^{\tau_j}$ to write
	\begin{align*}
		&\mathbb{E}\left(\mathds{1}_{\{\zeta>\tau_j\}}\int_{\tau_j}^{t\wedge\zeta}e^{-2\int_{\tau_j}^u G_{j-1}(v)\mathrm{d}v}(B_{j-1}(u)+D_{j-1}(u))\check{X}_{j-1}^{\tau_j}(u)\mathrm{d}u\big|\mathcal{F}^N_{\tau_{j}}\right)\\
		&\hspace{1cm}\leq 3\mathbb{E}\left(\mathds{1}_{\{\zeta>\tau_j\}}\int_{\tau_j}^{t\wedge\zeta}e^{-s(q_j-C_3-1)(u-\tau_j)}e^{-\int_{\tau_j}^u G_{j-1}(v)\mathrm{d}v}\check{X}_{j-1}^{\tau_j}(u)\mathrm{d}u\big|\mathcal{F}^N_{\tau_{j}}\right)\\
		&\hspace{1cm}\leq 3\int_{\tau_j}^{t}\mathrm{d}ue^{-s(q_j-C_3-1)(u-\tau_j)}\\
		&\hspace{3cm}\times\mathbb{E}\Bigg(\check{Z}_{j-1}^{\tau_j}(u\wedge\zeta)+\mathds{1}_{\{\zeta>t\}}\int_{\tau_j}^u \mathrm{d}re^{-\int_{\tau_j}^rG_{j-1}(v)\mathrm{d}v}\mu\frac{\alpha}{q_{j-1}}\frac{X_{j-2}(r)Y_{j-2}(r)}{W_{j-2}(r)}\big|\mathcal{F}^N_{\tau_{j}}\Bigg)\\
		&\hspace{1cm}\leq 3\int_{\tau_j}^{t}\mathrm{d}ue^{-s(q_j-C_3-1)(u-\tau_j)}\mathbb{E}\left(\mathds{1}_{\{\zeta>t\}}\int_{\tau_j}^u \mathrm{d}re^{-\int_{\tau_j}^rG_{j-1}(v)\mathrm{d}v}\mu\frac{\alpha}{q_{j-1}}\frac{X_{j-2}(r)Y_{j-2}(r)}{W_{j-2}(r)}\big|\mathcal{F}^N_{\tau_{j}}\right)\\
		&\hspace{1cm}\leq \frac{C}{sq_j}\frac{s}{\mu}\frac{e^{-s(\tau_j-\tau_{j-1})}}{q_jq_{j-1}},
	\end{align*}
	where the last inequality follows from \eqref{Equation bound in comput var check Z}.
	Thus, applying Proposition \ref{proposition tools} point 5 and coming back to \eqref{Equation cond var check Z}, we have shown that
	\begin{align*}
		\mathrm{Var}\left(\check{Z}_{j-1}^{\tau_j}(t\wedge \zeta)|\mathcal{F}^N_{\tau_j}\right)
		\leq C\frac{s}{\mu}\frac{e^{-s(\tau_j-\tau_{j-1})}}{sk_N^3}.
	\end{align*}
	Proposition \ref{proposition tools} point 1 gives $s(\tau_{j}-\tau_{j-1})\geq s\frac{a_N}{3k_N}=\log(s/\mu)/3k_N\to\infty$ by assumption $(A_2)$ such that $\log(s/\mu)/3k_N\log(k_N)\to\infty$ as $N\to\infty$.
	In particular, on $\{\zeta>\tau_j\}$, it holds that
	\begin{align}\label{Equation exp(sa_N/k_N)}
		e^{-s(\tau_j-\tau_{j-1})}
		=o\left(\frac{1}{k_N}\right)
	\end{align}
	Therefore, Doob's maximal inequality for square integrable martingales yields that
	\begin{align}\label{Equation bound sup check Z}
		\mathbb{P}\left(\sup_{t\in\intervalleff{\tau_j}{\tau_{j+1}}}\frac{|\check{Z}_{j-1}^{\tau_j}(t\wedge\zeta)|}{\lceil s/\mu\rceil}>\frac{1}{k_N^2}\Big|\mathcal{F}^N_{\tau_j}\right)
		&\leq C\frac{\mu}{s^2}k_Ne^{-s(\tau_j-\tau_{j-1})}
		=o(\mu^{1/2}),
	\end{align}
	since $\mu\ll s^a$ for any $a>0$ by \eqref{mu et s}.
	Then, we use Proposition \ref{proposition tools} point 3 to write
	\begin{align*}
		\frac{\check{X}_{j-1}^{\tau_j}(t\wedge\zeta)}{W_{j-1}(t\wedge\zeta)}
		&\leq e^{-\int_{\tau_j}^{t\wedge\zeta}G_{j-1}(v)\mathrm{d}v}\frac{\check{X}_{j-1}^{\tau_j}(t\wedge\zeta)}{\lceil s/\mu\rceil}\frac{1}{1-\delta}\\
		&=\frac{1}{1-\delta}\left(\frac{\check{Z}_{j-1}^{\tau_j}(t\wedge\zeta)}{\lceil s/\mu\rceil}+\int_{\tau_j}^{t\wedge\zeta} e^{-\int_{\tau_j}^u G_{j-1}(v)\mathrm{d}v}\mu\frac{\alpha}{q_{j-1}}\frac{X_{j-2}(u)Y_{j-2}(u)}{W_{j-2}(u)\lceil s/\mu\rceil}\mathrm{d}u\right)\\
		&\leq\frac{1}{1-\delta}\left(\frac{\check{Z}_{j-1}^{\tau_j}(t\wedge\zeta)}{\lceil s/\mu\rceil}+\mu e^{\int_{\tau_{j-1}}^{\tau_j}G_{j-1}(v)\mathrm{d}v}\frac{\alpha}{q_{j-1}}\int_{\tau_j}^{t\wedge\zeta} e^{-s(u-\tau_j)}\frac{X_{j-2}(u)Y_{j-2}(u)}{W_{j-2}(u)^2(1-\delta)}\mathrm{d}u\right)\\
		&\leq\frac{1}{1-\delta}\left(\frac{\check{Z}_{j-1}^{\tau_j}(t\wedge\zeta)}{\lceil s/\mu\rceil}+2s\frac{\alpha}{q_{j-1}}\int_{\tau_j}^{t\wedge\zeta} \frac{e^{-s(u-\tau_j)}}{1-\delta}\mathrm{d}u\right)\\
		&\leq\frac{1}{1-\delta}\left(\frac{\check{Z}_{j-1}^{\tau_j}(t\wedge\zeta)}{\lceil s/\mu\rceil}+\frac{2}{1-\delta}\frac{\alpha}{q_{j-1}}\right).
	\end{align*}
	This and \eqref{Equation bound sup check Z} together conclude the proof.
\end{proof}

The following lemma will allow us to control the fluctuations of the $(Y,j+1)$-individuals by controlling that of the $(Y,j)$-individuals between $\intervalleff{\tau_j}{\tau_{j+1}}$.

\begin{lem}\label{variance Z^Y}
	For all $j\in I,$ $j\geq j(2)$, the following process is a square integrable martingale:
	\begin{align*}
		&(Z_j^Y(t))_{t\in\intervallefo{\tau_{j+1}}{\tau_{j+2}}}\\
		&\hspace{0.2cm}:=Y_j(t)e^{-\int_{\tau_{j+1}}^tG_j(v)\mathrm{d}v}-Y_j(\tau_{j+1})-\int_{\tau_{j+1}}^t\left(1-\frac{\alpha}{q_{j-1}}\frac{X_{j-1}(u)}{W_{j-1}(u)}\right)\mu Y_{j-1}(u)e^{-\int_{\tau_{j+1}}^uG_j(v)\mathrm{d}v}\mathrm{d}u.
	\end{align*}
	Moreover, for $N$ large enough, for all $t\in\intervallefo{\tau_{j+1}}{\tau_{j+2}\wedge\zeta}$, one has the following upper bound for its conditional variance:
	\begin{align*}
		\mathrm{Var}\left(Z_j^Y(t)|\mathcal{F}^N_{\tau_{j+1}}\right)\leq\frac{21}{\mu k_N}.
	\end{align*}
\end{lem}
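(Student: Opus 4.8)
The plan is to grant, as announced in the paragraph following \eqref{variance martingale Z}, both that $Z_j^Y$ is a square integrable martingale and the shape of its conditional variance; the justification is exactly Schweinsberg's. The integral term $\int_{\tau_{j+1}}^t(1-\frac{\alpha}{q_{j-1}}\frac{X_{j-1}}{W_{j-1}})\mu Y_{j-1}e^{-\int_{\tau_{j+1}}^uG_j}\mathrm{d}u$ is the compensator of the killing-thinned mutation flux of $(Y,j-1)$-individuals into the $(Y,j)$-individuals, while the within-type births and deaths contribute as in \eqref{Equation variance martingale} applied to the subpopulation of $(Y,j)$-individuals. Writing $B_j^Y(u):=(N-Y_j(u))F_j(u)$ and $D_j^Y(u):=\mu+1-Y_j(u)F_j(u)$, this yields
\begin{align*}
	\mathrm{Var}\!\left(Z_j^Y(t)|\mathcal{F}^N_{\tau_{j+1}}\right)=\mathbb{E}\Bigg(\int_{\tau_{j+1}}^{t}e^{-2\int_{\tau_{j+1}}^uG_j(v)\mathrm{d}v}\Big(\big(1-\tfrac{\alpha}{q_{j-1}}\tfrac{X_{j-1}(u)}{W_{j-1}(u)}\big)\mu Y_{j-1}(u)+(B_j^Y(u)+D_j^Y(u))Y_j(u)\Big)\mathrm{d}u\bigg|\mathcal{F}^N_{\tau_{j+1}}\Bigg),
\end{align*}
and it remains to bound the two contributions, which I denote $T_1$ (mutation flux) and $T_2$ (births and deaths). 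Since $t<\zeta$, all the estimates of Proposition \ref{proposition tools} invoked below hold pathwise on $\intervalleff{\tau_{j+1}}{t}$.

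For $T_2$, the identity $B_j^Y(u)+D_j^Y(u)=\mu+1+(N-2Y_j(u))F_j(u)\le 3$ runs exactly as in \eqref{inequality Bj+Dj} (with $Y_j\ge 0$ in place of $W_j$), using $(A_3)$. As $t<\tau_{j+2}$, Proposition \ref{proposition tools} point 3 gives $Y_j(u)\le W_j(u)\le(1+\delta)\frac{s}{\mu}e^{\int_{\tau_{j+1}}^uG_j(v)\mathrm{d}v}$, so one exponential cancels and there remains $\int_{\tau_{j+1}}^te^{-\int_{\tau_{j+1}}^uG_j(v)\mathrm{d}v}\mathrm{d}u\le\frac{2}{sq_j}$, the very bound already used in the proof of Lemma \ref{Lemma double killings negligeable} (Proposition \ref{proposition tools} point 4 together with $s\to0$). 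Hence $T_2\le 6(1+\delta)/(\mu q_j)$.

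The term $T_1$ is where the bookkeeping is genuinely delicate and is the main obstacle: one must convert the growth of $W_{j-1}$, naturally expressed relative to $\tau_j$ with rate $G_{j-1}$, into the exponential based at $\tau_{j+1}$ with rate $G_j$ appearing in the integrand. Bounding the thinning factor by $1$ and $Y_{j-1}$ by $W_{j-1}$, I would apply Proposition \ref{proposition tools} point 3 with index $j-1$ to get $W_{j-1}(u)\le(1+\delta)\frac{s}{\mu}e^{\int_{\tau_j}^uG_{j-1}(v)\mathrm{d}v}$, then split $\int_{\tau_j}^u=\int_{\tau_j}^{\tau_{j+1}}+\int_{\tau_{j+1}}^u$, use $e^{\int_{\tau_j}^{\tau_{j+1}}G_{j-1}}\le\frac{2s}{\mu}$ (Proposition \ref{proposition tools} point 5 with $G_{j-1}=G_j-s$ from \eqref{Equation G_j with positive fitnesses}), and finally $e^{\int_{\tau_{j+1}}^uG_{j-1}}=e^{\int_{\tau_{j+1}}^uG_j}e^{-s(u-\tau_{j+1})}$ to make the $e^{-2\int_{\tau_{j+1}}^uG_j}$ factor collapse to $e^{-\int_{\tau_{j+1}}^uG_j}$; this is the same manoeuvre as in \eqref{Equation bound in comput var check Z}. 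The surviving integral is again $\le\frac{2}{sq_j}$, leaving an extra $s$ in the numerator, so that $T_1\le 4(1+\delta)s/(\mu q_j)=o(1/(\mu k_N))$ since $s\to0$.

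Combining the two bounds and using $q_j\ge(1-2\delta)k_N$ (Proposition \ref{proposition tools} point 4) together with $\delta<1/100$ from \eqref{delta et epsilon}, I obtain $T_1+T_2\le\frac{6(1+\delta)}{(1-2\delta)\mu k_N}+o\!\left(\frac{1}{\mu k_N}\right)$, which falls below $\frac{21}{\mu k_N}$ for $N$ large enough, with considerable room to spare.
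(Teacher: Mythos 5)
Your proof is correct, but it takes a genuinely different (and more self-contained) route than the paper: the paper's entire proof consists of admitting the martingale property, as announced in the discussion after \eqref{variance martingale Z}, observing that the thinning factor satisfies $1-\frac{\alpha}{q_{j-1}}\frac{X_{j-1}(u)}{W_{j-1}(u)}\leq 1$, and then quoting Lemma 9.27 of \cite{S151}, which already contains the bound $21/(\mu k_N)$ for the corresponding unthinned martingale. You instead reconstruct what that citation hides: you write down the variance formula for the thinned martingale (legitimate under the paper's standing convention of admitting these formulas) and bound the mutation-flux term $T_1$ and the birth--death term $T_2$ by hand via Proposition \ref{proposition tools} points 3--5 and \eqref{inequality Bj+Dj}, landing at roughly $6.2/(\mu k_N)$. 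This buys transparency about where the (generous) constant $21$ comes from, at the cost of redoing work that \cite{S151} has already packaged; the paper's citation is shorter but opaque. Two small repairs to your argument: on $\intervalleff{\tau_{j+1}}{\tau_{j+2}}$, Proposition \ref{proposition tools} point 4 as stated controls $G_{j+1}$, not $G_j$, so the bound $\int_{\tau_{j+1}}^{t}e^{-\int_{\tau_{j+1}}^{u}G_j(v)\mathrm{d}v}\mathrm{d}u\leq 2/(sq_j)$ should be derived via $G_j=G_{j+1}-s$ from \eqref{Equation G_j with positive fitnesses}, which yields $2/(sq_{j+1})$ instead --- harmless, since $q_j$ and $q_{j+1}$ are both comparable to $k_N$ by point 4, so your final estimate survives with a trivially adjusted constant; and the pathwise invocation of Proposition \ref{proposition tools} inside the conditional expectation should strictly be implemented by stopping the martingale at $\zeta$ and inserting indicators $\mathds{1}_{\{\zeta>u\}}$, exactly as the paper does in the proof of Lemma \ref{Lemma double killings negligeable} --- your phrase ``since $t<\zeta$'' is the same level of informality the paper itself tolerates, so this is a presentational point rather than a gap.
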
  

\begin{proof}
	Again, we admit that $Z_j^Y$ is a square integrable martingale.
	Since $\frac{\alpha}{q_{j-1}}\frac{X_{j-1}(u)}{W_{j-1}(u)}\leq\frac{\alpha}{q_{j-1}}\to 0$ as $N\to\infty$ by Proposition \ref{proposition tools} point 4, one gets the upper bound for the variance as a direct consequence of Lemma 9.27 in \cite{S151}.
\end{proof}

The next lemma shows that the evolution of $Y_{j-1}(\tau_j+t)$ until $\tau_{j+1}\wedge\zeta$ remains predictable.
\begin{lem}\label{control Y}
	For all $j\in I,$ $j\geq j(2)+1$, on the event $\{\zeta>\tau_j\}$, it holds that
	\begin{align*}
		\mathbb{P}\left(\left|Y_{j-1}(t)-Y_{j-1}(\tau_{j})e^{\int_{\tau_{j}}^tG_{j-1}(v)\mathrm{d}v}\right|\leq\delta \lceil s/\mu\rceil e^{\int_{\tau_{j}}^tG_{j-1}(v)\mathrm{d}v},\ \forall t\in\intervalleff{\tau_j}{\tau_{j+1}\wedge \zeta}\Big|\mathcal{F}^N_{\tau_{j}}\right)&\\
		=1-o\left(\frac{1}{k_N}\right)&.
	\end{align*}
	Furthermore, the same statement holds with $X_{j-1}$ instead of $Y_{j-1}$.
\end{lem}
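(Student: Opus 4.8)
The plan is to adapt the martingale method already used for Lemma~\ref{variance Z^Y}, shifted down by one index. Since the hypothesis is $j\geq j(2)+1$, the index $j-1$ satisfies $j-1\geq j(2)$, so Lemma~\ref{variance Z^Y} applies with $j$ replaced by $j-1$: the process
\begin{align*}
	Z_{j-1}^Y(t):=Y_{j-1}(t)e^{-\int_{\tau_j}^tG_{j-1}(v)\mathrm{d}v}-Y_{j-1}(\tau_j)-R_j(t),\qquad R_j(t):=\int_{\tau_j}^t\left(1-\frac{\alpha}{q_{j-2}}\frac{X_{j-2}(u)}{W_{j-2}(u)}\right)\mu Y_{j-2}(u)e^{-\int_{\tau_j}^uG_{j-1}(v)\mathrm{d}v}\mathrm{d}u
\end{align*}
is a square integrable martingale on $\intervallefo{\tau_j}{\tau_{j+1}}$ with conditional variance at most $21/(\mu k_N)$. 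Since $Y_{j-1}(t)e^{-\int_{\tau_j}^tG_{j-1}(v)\mathrm{d}v}-Y_{j-1}(\tau_j)=Z_{j-1}^Y(t)+R_j(t)$, it suffices to prove that, on $\{\zeta>\tau_j\}$, the compensator $R_j(t\wedge\zeta)$ is at most $\tfrac{\delta}{2}\lceil s/\mu\rceil$ deterministically, while $\sup_{t}|Z_{j-1}^Y(t\wedge\zeta)|\leq\tfrac{\delta}{2}\lceil s/\mu\rceil$ with conditional probability $1-o(1/k_N)$. I would handle the stopping time $\zeta$ exactly as in the proof of Lemma~\ref{Lemma double killings negligeable}, working with $t\wedge\zeta$ and the indicator $\mathds{1}_{\{\zeta>\tau_j\}}$.

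The heart of the argument, and the step I expect to be the main obstacle, is the deterministic bound on the influx $R_j$. Bounding the weak-selection factor by $1$ and using $Y_{j-2}\leq W_{j-2}$, then applying Proposition~\ref{proposition tools} point 3 to $W_{j-2}$, gives $\mu W_{j-2}(u)\leq(1+\delta)s\,e^{\int_{\tau_{j-1}}^uG_{j-2}(v)\mathrm{d}v}$. Using the identity $G_{j-1}(v)=G_{j-2}(v)+s$ from \eqref{Equation G_j with positive fitnesses}, the exponents telescope to
\begin{align*}
	\int_{\tau_{j-1}}^uG_{j-2}(v)\mathrm{d}v-\int_{\tau_j}^uG_{j-1}(v)\mathrm{d}v=\int_{\tau_{j-1}}^{\tau_j}G_{j-2}(v)\mathrm{d}v-s(u-\tau_j),
\end{align*}
so that $R_j(t)\leq(1+\delta)e^{\int_{\tau_{j-1}}^{\tau_j}G_{j-2}(v)\mathrm{d}v}$ after integrating $e^{-s(u-\tau_j)}$ over $\intervalleff{\tau_j}{t}$. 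The naive bound $e^{\int_{\tau_{j-1}}^{\tau_j}G_{j-2}(v)\mathrm{d}v}\leq 2s/\mu$ would only give $R_j=O(s/\mu)$, which is useless; the crucial point is to extract a decay factor by writing $\int_{\tau_{j-1}}^{\tau_j}G_{j-2}(v)\mathrm{d}v=\int_{\tau_{j-1}}^{\tau_j}G_{j-1}(v)\mathrm{d}v-s(\tau_j-\tau_{j-1})$ and applying Proposition~\ref{proposition tools} point 5 to the first term, which yields $e^{\int_{\tau_{j-1}}^{\tau_j}G_{j-2}(v)\mathrm{d}v}\leq\frac{2s}{\mu}e^{-s(\tau_j-\tau_{j-1})}$. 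Since $e^{-s(\tau_j-\tau_{j-1})}=o(1/k_N)$ by \eqref{Equation exp(sa_N/k_N)} (a consequence of Assumption $(A_2)$ and Proposition~\ref{proposition tools} point 1), I obtain $R_j(t)=o(s/(\mu k_N))$, which is $\leq\tfrac{\delta}{2}\lceil s/\mu\rceil$ for $N$ large.

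For the martingale term I would apply Doob's maximal inequality together with the variance bound $21/(\mu k_N)$, which gives
\begin{align*}
	\mathbb{P}\left(\sup_{t\in\intervalleff{\tau_j}{\tau_{j+1}}}|Z_{j-1}^Y(t\wedge\zeta)|>\tfrac{\delta}{2}\lceil s/\mu\rceil\Big|\mathcal{F}^N_{\tau_j}\right)\leq\frac{4\cdot 21/(\mu k_N)}{\delta^2\lceil s/\mu\rceil^2}\leq\frac{84\mu}{\delta^2k_Ns^2}=o\left(\frac{1}{k_N}\right),
\end{align*}
where the last equality uses $\mu\ll s^2$, which follows from \eqref{mu et s}. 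Combining the two bounds yields the claim for $Y_{j-1}$. For $X_{j-1}$ the argument is identical: the corresponding martingale satisfies the same variance bound, and its compensator has the same form except that the influx additionally counts the compensation births, replacing $(1-\frac{\alpha}{q_{j-2}}\frac{X_{j-2}}{W_{j-2}})\mu Y_{j-2}$ by $\mu X_{j-2}+\frac{\alpha}{q_{j-2}}\frac{X_{j-2}}{W_{j-2}}\mu Y_{j-2}\leq 2\mu W_{j-2}$, so the same telescoping bound applies up to the harmless factor $2$.
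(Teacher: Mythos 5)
Your proposal is correct and follows essentially the same route as the paper's proof: the same decomposition of $Y_{j-1}(t)e^{-\int_{\tau_j}^t G_{j-1}(v)\mathrm{d}v}-Y_{j-1}(\tau_j)$ into the martingale $Z_{j-1}^Y$ of Lemma \ref{variance Z^Y} plus its compensator, the same extraction of the decay factor $e^{-s(\tau_j-\tau_{j-1})}=o(1/k_N)$ via Proposition \ref{proposition tools} points 3 and 5 together with $G_{j-1}=G_{j-2}+s$, and the same Doob maximal inequality with the $21/(\mu k_N)$ variance bound and $\mu\ll s^2$. Your brief treatment of the $X_{j-1}$ case (bounding the influx by $2\mu W_{j-2}$) is a harmless elaboration of what the paper dismisses as ``identical,'' and the index $q_{j-2}$ versus the paper's $q_j$ in the compensator is immaterial since that factor is bounded by $1$.
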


\begin{proof}
	The statements 2,3,4 of Proposition \ref{proposition tools} hold up to time $t$ on the event $\{\zeta>t\}$.
	Write
	\begin{align}\label{Equation Y_{j-1}}
		Y_{j-1}(t)=e^{\int_{\tau_{j}}^tG_{j-1}(v)\mathrm{d}v}\left(Y_{j-1}(\tau_{j}) +\int_{\tau_{j}}^t\left(1-\frac{\alpha}{q_j}\frac{X_{j-2}(u)}{W_{j-2}(u)}\right)\mu Y_{j-2}(u)e^{-\int_{\tau_{j}}^uG_{j-1}(v)\mathrm{d}v}\mathrm{d}u+Z_{j-1}^Y(t)\right).
	\end{align}
	We shall bound the two last terms in the above parentheses.
	By point 3 of Proposition \ref{proposition tools}, one has
	\begin{align*}
		\left(1-\frac{\alpha}{q_{j-1}}\frac{X_{j-2}(u)}{W_{j-2}(u)}\right)\mu Y_{j-2}(u)e^{-\int_{\tau_{j}}^uG_{j-1}(v)\mathrm{d}v}
		&\leq (1+\delta)se^{\int_{\tau_{j-1}}^uG_{j-2}(v)\mathrm{d}v}e^{-\int_{\tau_{j}}^uG_{j-1}(v)\mathrm{d}v},\\
		\shortintertext{now remark that $G_{j-1}(v)=G_{j-2}(v)+s$ for all $v\in\intervalleff{\tau_j}{t}$ \eqref{Equation G_j with positive fitnesses} and obtain}
		&=(1+\delta)se^{\int_{\tau_{j-1}}^{\tau_{j}}G_{j-1}(v)\mathrm{d}v}e^{-s(u-\tau_{j-1})},\\
		\shortintertext{then using the point 5 of Proposition \ref{proposition tools},}
		&\leq (1+\delta)\frac{2s^2}{\mu}e^{-s(u-\tau_{j-1})}.
	\end{align*}
	Hence,
	\begin{align*}
		\int_{\tau_{j}}^t\left(1-\frac{\alpha}{q_{j-1}}\frac{X_{j-2}(u)}{W_{j-2}(u)}\right)\mu Y_{j-2}(u)e^{-\int_{\tau_{j}}^uG_{j-1}(v)\mathrm{d}v}\mathrm{d}u
		\leq (1+\delta)\frac{2s}{\mu}(e^{-s(\tau_{j}-\tau_{j-1})}-e^{-s(t-\tau_{j-1})}).
	\end{align*}
	By \eqref{Equation exp(sa_N/k_N)}, we thus have shown that for $N$ large enough, on the event $\{\zeta>t\}$,
	\begin{align}\label{Equation int 1-alpha}
		\int_{\tau_{j}}^t\left(1-\frac{\alpha}{q_{j-1}}\frac{X_{j-2}(u)}{W_{j-2}(u)}\right)\mu Y_{j-2}(u)e^{-\int_{\tau_{j}}^uG_{j-1}(v)\mathrm{d}v}\mathrm{d}u
		\leq 3\frac{s}{\mu}\times o(1/k_N).
	\end{align}
	Furthermore, using Lemma \ref{variance Z^Y} and the Doob's maximal inequality for squared integrable martingales, one has
	\begin{align*}
		\mathbb{P}\left(\sup_{t\in\intervallefo{\tau_j}{\tau_{j+1}}}\left|Z_{j-1}^Y(t\wedge \zeta)\right|>\frac{\delta}{2}\lceil s/\mu\rceil\Big|\mathcal{F}^N_{\tau_{j}}\right)
		&\leq\frac{4}{\delta^2}\mathrm{Var}\left(Z_{j-1}^Y(\tau_{j+1}\wedge\zeta)|\mathcal{F}^N_{\tau_{j}}\right)\\
		&\leq\frac{4\mu^2}{\delta^2s^2}\frac{21}{\mu k_N}
		=o\left(\frac{1}{k_N}\right),
	\end{align*}
	by \eqref{mu et s}.
	Combined with \eqref{Equation Y_{j-1}} and \eqref{Equation int 1-alpha}, this shows that conditionally given $\zeta>\tau_j$, with probability $1-o(1/k_N)$, it holds that
	\begin{align*}
		\left|Y_{j-1}(t)-Y_{j-1}(\tau_{j})e^{\int_{\tau_{j}}^tG_{j-1}(v)\mathrm{d}v}\right|\leq \delta\lceil s/\mu\rceil e^{\int_{\tau_j}^{t\wedge\zeta}G_{j-1}(v)\mathrm{d}v},\quad\forall t\in\intervalleff{\tau_j}{\tau_{j+1}\wedge \zeta},
	\end{align*}
	which completes the proof of the statement for $Y_{j-1}$.

	The proof of the statement for $X_{j-1}$ is identical.
\end{proof}

\subsubsection{The non-early individuals}

We will need the following lemma to control the non-early individuals.
	
	\begin{lem}\label{variance slow martingales}
		For $j\in I,$ $j\geq j(2)$, let $\widetilde{W}_j$ be the process which counts the number of non-early individuals, i.e. that obtain a $j$th mutation during $\intervallefo{\xi_j}{\tau_{j+1}}$ and their descendants of type $j$, and $\widetilde{Z}_j$ the associated martingale, that is for all $t\in\intervallefo{\xi_j}{\tau_{j+1}}$:
		\begin{align*}
			\widetilde{Z}_j(t):=e^{-\int_{\xi_j}^tG_j(v)\mathrm{d}v}\widetilde{W}_j(t)-\int_{\xi_j}^t\mu W_{j-1}(u)e^{-\int_{\xi_j}^uG_j(v)\mathrm{d}v}\mathrm{d}u.
		\end{align*}
		Then, its conditional variance at time $\tau_{j+1}$ satisfies
		\begin{align*}
			\mathrm{Var}\left(\mathds{1}_{\{\zeta>\tau_{j+1}\}}\widetilde{Z}_j(\tau_{j+1})\Big|\mathcal{F}^N_{\xi_j}\right)
			\leq\frac{Ce^{\int_{\tau_j}^{\xi_j}G_j(v)\mathrm{d}v}}{sk_N^2}.
		\end{align*}
		Moreover, denoting $(Y'_j(t))_{t\in\intervalleff{\xi_j}{\tau_{j+1}}}$ the process following the number of non-early $(\check{Y},j)$-individuals, the same upper bound holds for the martingale defined by
		\begin{align*}
			Z_j'(t)
			:=e^{-\int_{\xi_j}^tG_j(v)\mathrm{d}v}Y_j'(t)-\int_{\xi_j}^t\mu Y_{j-1}(u)e^{-\int_{\xi_j}^uG_j(v)\mathrm{d}v}\mathrm{d}u.
		\end{align*}
		Finally, recall that $\tau_j'$ is the stopping time defined in \eqref{Equation def tau_j prime}, for $t\geq\tau_j'$, the following process is a mean zero square integrable martingale
		\begin{align*}
			\check{Z}_j^X(t)
			:=e^{-\int_{\tau_j'}^tG_j(v)\mathrm{d}v}\check{X}_j(t)-\int_{\tau_j'}^t\mu\frac{\alpha}{q_j}\frac{X_{j-1}(u)Y_{j-1}(u)}{W_{j-1}(u)} e^{-\int_{\tau_j'}^uG_j(v)\mathrm{d}v}\mathrm{d}u-\check{X}_j(\tau_j'),
		\end{align*}
		with conditional variance
		\begin{align*}
			\mathrm{Var}\left(\mathds{1}_{\{\zeta>\tau_{j+1}\}}\check{Z}_j^X(\tau_{j+1})\Big|\mathcal{F}^N_{\tau_j'}\right)
			\leq\frac{Ce^{\int_{\tau_j}^{\tau_j'}G_j(v)\mathrm{d}v}}{sk_N^3}
		\end{align*}
	\end{lem}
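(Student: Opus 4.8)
The plan is to follow Schweinsberg's martingale method from Section~5 of \cite{S151}. Since the paper admits throughout both the martingale property and the Itô-type variance formula, I take for granted that each of the three conditional variances has the shape of \eqref{variance martingale Z} and \eqref{Equation variance martingale}, namely
\[
\mathbb{E}\left(\mathds{1}_{\{\zeta>\tau_{j+1}\}}\int_\sigma^{\tau_{j+1}}e^{-2\int_\sigma^u G_j(v)\mathrm{d}v}\Big(R(u)+(B_j(u)+D_j(u))P(u)\Big)\mathrm{d}u\,\Big|\,\mathcal{F}^N_\sigma\right),
\]
where $(\sigma,P,R)$ equals $(\xi_j,\widetilde W_j,\mu W_{j-1})$, $(\xi_j,Y_j',\mu Y_{j-1})$ and $(\tau_j',\check X_j,\mu\frac{\alpha}{q_j}\frac{X_{j-1}Y_{j-1}}{W_{j-1}})$ respectively. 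The whole task is then to bound this integral on the event $\{\zeta>\tau_{j+1}\}$, where I may freely use Proposition~\ref{proposition tools}, the bound $B_j+D_j\le 3$ from \eqref{inequality Bj+Dj}, and the identity $G_{j-1}=G_j-s$ from \eqref{Equation G_j with positive fitnesses}.

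For the first martingale I would bound the immigration rate using point~3 of Proposition~\ref{proposition tools}, getting $\mu W_{j-1}(u)\le (1+\delta)s\,e^{\int_{\tau_j}^u G_j}e^{-s(u-\tau_j)}$; against $e^{-2\int_{\xi_j}^u G_j}$ this leaves $e^{\int_{\tau_j}^{\xi_j}G_j}e^{-\int_{\xi_j}^u G_j}$, and integrating with $G_j\ge s(q_j-C_3)$ and $q_j\ge(1-2\delta)k_N$ (point~4) yields at most $C\,e^{\int_{\tau_j}^{\xi_j}G_j}/k_N$, which is $\le C\,e^{\int_{\tau_j}^{\xi_j}G_j}/(sk_N^2)$ because $sk_N\to0$ by $(A_3)$. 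The decisive term is $(B_j+D_j)\widetilde W_j$. Using the uniform bound $\widetilde W_j(u)\le(1+4\delta)e^{\int_{\tau_j}^u G_j}$ from point~2 would only give $C\,e^{\int_{\tau_j}^{\xi_j}G_j}/(sk_N)$, losing a factor $k_N$; instead I would exploit that $\widetilde W_j$ is freshly created after $\xi_j$ (it vanishes at $\xi_j$), so integrating its own source gives the sharper $\widetilde W_j(u)\lesssim s\,e^{\int_{\tau_j}^{\xi_j}G_j}e^{\int_{\xi_j}^u G_j}(u-\xi_j)$. Against the weight, and with $B_j+D_j\le 3$, this produces $\int_{\xi_j}^{\tau_{j+1}}(u-\xi_j)e^{-s(q_j-C_3)(u-\xi_j)}\mathrm{d}u\le(s(q_j-C_3))^{-2}$, i.e. the extra power of $k_N$ needed for the claimed $C\,e^{\int_{\tau_j}^{\xi_j}G_j}/(sk_N^2)$.

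The second martingale is handled verbatim, since $Y_{j-1}\le W_{j-1}$ and $Y_j'\le\widetilde W_j$ make both integrands dominated by those of the first, so monotonicity gives the same bound. For the third one I would run the same scheme on $[\tau_j',\tau_{j+1}]$, bounding the killing source by $\frac{X_{j-1}Y_{j-1}}{W_{j-1}}\le W_{j-1}$ so that it matches the first case up to the prefactor $\frac{\alpha}{q_j}\le C/k_N$; this single extra $1/k_N$ is what turns $k_N^2$ into $k_N^3$ for the immigration term, and likewise for the birth--death term once the freshly-seeded refinement is applied to the descendants of killings occurring after $\tau_j'$. The one genuinely new difficulty is that $\check X_j$ is not zero at $\tau_j'$: its initial value $\check X_j(\tau_j')$, made of descendants of killings in $[\xi_j,\tau_j']$ (the earlier ones not surviving to $\tau_j'$ after the redefinition of $\zeta$ following Lemma~\ref{Lemma no early killing}), contributes to the birth--death term through its own progeny. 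Controlling this contribution, again using the $\frac{\alpha}{q_j}$ smallness of the killing rate together with $s(\tau_j'-\tau_j)\to0$, is the step I expect to be most delicate.

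Accordingly, the main obstacle is not writing down the variance integrals but keeping the sharp power of $k_N$ in the birth--death contributions. The naive uniform bounds from Proposition~\ref{proposition tools} are off by a factor $k_N$, and the resolution each time is to use that the tracked subpopulation is seeded at the left endpoint and grows essentially linearly there, so that the strong damping $e^{-\int_\cdot^u G_j}$ of rate $\sim sk_N$ concentrates the integral near that endpoint and supplies the missing power. This is precisely the phenomenon behind Lemma~9.27 of \cite{S151} (already invoked in Lemma~\ref{variance Z^Y}), which one may alternatively cite once each variance has been cast in the standard form above; for the third martingale this refinement must additionally be combined with a bound on the pre-$\tau_j'$ killings.
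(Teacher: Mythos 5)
Your proposal is correct and follows essentially the same route as the paper: the paper likewise admits the variance formulas, disposes of $\widetilde{Z}_j$ and $Z_j'$ by citing Lemma 9.12 of \cite{S151} (whose mechanism is exactly your freshly-seeded refinement), and proves the $\check{Z}_j^X$ bound by substituting the martingale decomposition of $\check{X}_j(u)$ into the birth--death term so that the mean-zero martingale drops out and the $\alpha/q_j$-small compensator, via the double integral $\int_{\tau_j'}^{\infty}(e^{-s(q_j-C_3)(u-\tau_j')}-e^{-s(q_j-C_3+1)(u-\tau_j')})\mathrm{d}u$, supplies the extra powers of $q_j$. The one step you flag as most delicate is dispatched in a single line there: the paper asserts $\check{X}_j(\tau_j')=0$ on $\{\zeta>\tau_j'\}$, appealing to the redefinition of $\zeta$ following Lemma \ref{Lemma no early killing}, so no separate control of killings in $\intervalleof{\xi_j}{\tau_j'}$ is carried out (your caution is in fact not unreasonable, since the event $E_j$ literally concerns only killings during early mutations in $\intervalleff{\tau_j}{\xi_j}$, but the paper's proof simply takes the vanishing of the initial value as given).
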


	The reason for defining the martingale $\check{Z}_j^X$ only from $\tau_j'>\xi_j$ on is because $\intervalleff{\tau_j'}{\tau_{j+1}}$ contains enough of $\intervalleff{\tau_j}{\tau_{j+1}}$ while being far enough from $\xi_j$ (and a fortiori $\tau_j$) to reduce the conditional variance of $\check{Z}_j^X$ given $\mathcal{f}^N_{\xi_j}$.

	\begin{proof}
		We only show the bound of the variance of $\check{Z}_j^X$ since the other statements follow from Lemma 9.12 in \cite{S151} (the ideas of the proof therein are similar to those presented here).

		On the event $\{\zeta>\tau_j'\}$, the formula for the variance is
		\begin{align*}
			&\mathrm{Var}\left(\check{Z}_j^X(\tau_{j+1}\wedge\zeta)\Big|\mathcal{F}^N_{\tau_j'}\right)\\
			&\hspace{1cm}=\mathbb{E}\left(\int_{\tau_j'}^{\tau_{j+1}\wedge\zeta}\mathrm{d}ue^{-2\int_{\tau_j'}^uG_j(v)\mathrm{d}v}\left(\mu\frac{\alpha}{q_j}\frac{X_{j-1}(u)Y_{j-1}(u)}{W_{j-1}(u)} + (B_j(u)+D_j(u))\check{X}_j(u)\right)\Big|\mathcal{F}^N_{\tau_j'}\right)
		\end{align*}
		We focus on the first term in the parentheses.
		Since both $X_{j-1}(u)$ and $Y_{j-1}(u)$ are smaller than $W_{j-1}(u)$ by definition, Proposition \ref{proposition tools} point 3 shows that
		\begin{align*}
			&\int_{\tau_j'}^{\tau_{j+1}\wedge\zeta}\mathrm{d}ue^{-2\int_{\tau_j'}^uG_j(v)\mathrm{d}v}\mu\frac{\alpha}{q_j}\frac{X_{j-1}(u)Y_{j-1}(u)}{W_{j-1}(u)}\\
			&\hspace{4cm}\leq (1+\delta)\mu\frac{\alpha}{q_j}\lceil s/\mu\rceil e^{\int_{\tau_j}^{\tau_j'}G_{j-1}(v)\mathrm{d}v}\int_{\tau_j'}^{\tau_{j+1}\wedge\zeta}\mathrm{d}ue^{-\int_{\tau_j'}^uG_j(v)\mathrm{d}v}e^{-s(u-\tau_j')}\\
			&\hspace{4cm}\leq Cs\frac{\alpha}{q_j} e^{\int_{\tau_j}^{\tau_j'}G_{j-1}(v)\mathrm{d}v}\int_{\tau_j'}^{\tau_{j+1}\wedge\zeta}\mathrm{d}ue^{-\int_{\tau_j'}^uG_j(v)\mathrm{d}v}e^{-s(u-\tau_j')},\\
			\shortintertext{thanks to Proposition \ref{proposition tools} point 4, we then get}
			&\hspace{4cm}\leq Cs\frac{\alpha}{q_j} e^{\int_{\tau_j}^{\tau_j'}G_{j-1}(v)\mathrm{d}v}\int_{\tau_j'}^{\tau_{j+1}\wedge\zeta}\mathrm{d}ue^{-s(q_j-C_3+1)(u-\tau_j')}\\
			&\hspace{4cm}\leq C\frac{\alpha}{q_j} e^{\int_{\tau_j}^{\tau_j'}G_{j-1}(v)\mathrm{d}v}\frac{1}{q_j-C_3+1}\leq C\frac{e^{\int_{\tau_j}^{\tau_j'}G_{j-1}(v)\mathrm{d}v}}{q_j^2},
		\end{align*}
		where we used that $q_j\geq (1-2\delta) k_N\to\infty$ as $N\to\infty$ by Proposition \ref{proposition tools} point 1.

		For the other terms of the variance, by \eqref{inequality Bj+Dj}, we have
		\begin{align*}
			&\mathbb{E}\left(\int_{\tau_j'}^{\tau_{j+1}\wedge\zeta}\mathrm{d}ue^{-2\int_{\tau_j'}^uG_j(v)\mathrm{d}v}(B_j(u)+D_j(u))\check{X}_j(u)\Big|\mathcal{F}^N_{\tau_j'}\right)\\
			&\hspace{0.5cm}\leq 3\mathbb{E}\left(\int_{\tau_j'}^{\tau_{j+1}\wedge\zeta}\mathrm{d}ue^{-2\int_{\tau_j'}^uG_j(v)\mathrm{d}v}\check{X}_j(u)\Big|\mathcal{F}^N_{\tau_j'}\right)\\
			&\hspace{0.5cm}=3\mathbb{E}\left(\int_{\tau_j'}^{\tau_{j+1}\wedge\zeta}\mathrm{d}ue^{-\int_{\tau_j'}^uG_j(v)\mathrm{d}v}\left(\check{Z}_j^X(u)+\mu\frac{\alpha}{q_j}\int_{\tau_j'}^ue^{-\int_{\tau_j'}^rG_j(v)\mathrm{d}v}\frac{X_{j-1}(r)Y_{j-1}(r)}{W_{j-1}(r)}\mathrm{d}r\right)+\check{X}_j(\tau_j')\Big|\mathcal{F}^N_{\tau_j'}\right).\\
			\shortintertext{We have that $\check{X}_j(\tau_j')=0$ since $\zeta>\tau_j'$, as explained in the discussion following Lemma \ref{Lemma no early killing}. We bound the term with $X_{j-1}Y_{j-1}/W_{j-1}\leq W_{j-1}$ with Proposition \ref{proposition tools} point 3, we upper bound $-G_j(v)\leq s(q_j-C_3)$ with the Proposition \ref{proposition tools} point 4 and then use the martingale property of $\check{Z}_j^X$ to obtain}
			&\hspace{0.5cm}\leq C\mu\frac{\alpha}{q_j}\frac{X_{j-1}(\tau_j)Y_{j-1}(\tau_j)}{\lceil s/\mu\rceil}e^{\int_{\tau_j}^{\tau_j'}G_{j-1}(v)\mathrm{d}v}\mathbb{E}\left(\int_{\tau_j'}^{\tau_{j+1}\wedge\zeta}\mathrm{d}ue^{-s(q_j-C_3)(u-\tau_j')}\int_{\tau_j'}^{u}\mathrm{d}re^{-s(r-\tau_j')}\Big|\mathcal{F}^N_{\tau_j'}\right)\\
			&\hspace{0.5cm}\leq C\frac{\alpha}{q_j}e^{\int_{\tau_j}^{\tau_j'}G_{j-1}(v)\mathrm{d}v}\int_{\tau_j'}^{\infty}\mathrm{d}u\left(e^{-s(q_j-C_3)(u-\tau_j')}-e^{-s(q_j-C_3+1)(u-\tau_j')}\right)\\
			&\hspace{0.5cm}\leq \frac{Ce^{\int_{\tau_j}^{\tau_j'}G_{j-1}(v)\mathrm{d}v}}{sq_j^3},
		\end{align*}
		and the claim follows from Proposition \ref{proposition tools} point 4.
	\end{proof}

	\subsubsection{After an early mutation}
	
	We now investigate the impact of an early mutation on the proportions.
	The next lemma essentially states that the difference between the proportion of $(\check{Y},j-1)$-individuals among the type $j-1$ individuals at time $\tau_j$ to the proportion of $(Y,j+1)$ among the type $j$ individuals at time $\tau_{j+1}$ is mostly determined by the number of early type $j$ individuals.
	\begin{lem}\label{Lemma Y after early mutation}
		Let $j\in I,$ $j\geq j(2)$ and let $S$ be the proportion of early type $j$ individuals at time $\tau_{j+1}$.
		Conditionally given $\mathcal{F}^N_{\tau_j}$, on the event $\{\zeta>\tau_{j+1}\}$, the following holds with probability at least $1-C\epsilon/k_N$: If $S>0$, then the early individuals have the same ancestor at time $\tau_{j}$, and if it belongs to the group $X$, then
		\begin{align*}
			Y_{j-1}(\tau_j)(1-S)-8\delta\lceil s/\mu\rceil\leq \check{Y}_j(\tau_{j+1})\leq Y_{j-1}(\tau_j)(1-S)+8\delta\lceil s/\mu\rceil.
		\end{align*}
		Similarly, if the ancestor belongs to the group $Y$, then 
		\begin{align*}
			S\lceil s/\mu\rceil+Y_{j-1}(\tau_j)(1-S)-8\delta\lceil s/\mu\rceil\leq \check{Y}_j(\tau_{j+1})\leq S\lceil s/\mu\rceil+Y_{j-1}(\tau_{j})(1-S)+8\delta\lceil s/\mu\rceil.
		\end{align*}
	\end{lem}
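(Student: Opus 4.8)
The plan is to split the $(\check Y,j)$-individuals present at $\tau_{j+1}$ into the \emph{early} ones and the \emph{non-early} ones, writing $\check Y_j(\tau_{j+1})=\check Y^{\mathrm e}_j(\tau_{j+1})+Y'_j(\tau_{j+1})$, where $Y'_j$ is the process of Lemma~\ref{variance slow martingales} counting the non-early $(\check Y,j)$-individuals and $\check Y^{\mathrm e}_j$ counts the early ones. Since every killing occurring in $\intervalleff{\tau_j}{\tau_{j+1}}$ is cancelled when counting $\check Y_j$, the group of each early individual alive at $\tau_{j+1}$ coincides with the group of its ancestor at time $\tau_j$. I would first invoke Schweinsberg's coupling of the early families with a branching process (\cite{S152}, Section~7, the coupling underlying Lemma~\ref{Lemma convergence proportion family} and Lemma~\ref{Lemma no early killing}): on an event of conditional probability $1-o(\epsilon/k_N)$, at most one early type $j$ mutation has descendants alive at $\tau_{j+1}$, so whenever $S>0$ all early individuals share a single ancestor at $\tau_j$. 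On this event $\check Y^{\mathrm e}_j(\tau_{j+1})$ equals $S\lceil s/\mu\rceil$ if that ancestor lies in $Y$ and $0$ if it lies in $X$; moreover, since the whole type $j$ population at $\tau_{j+1}$ has size $\lceil s/\mu\rceil$, the number of non-early type $j$ individuals is exactly $\widetilde W_j(\tau_{j+1})=(1-S)\lceil s/\mu\rceil$.

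The core of the proof is to show that $Y'_j(\tau_{j+1})=Y_{j-1}(\tau_j)(1-S)+O(\delta)\lceil s/\mu\rceil$ off an event of probability $C\epsilon/k_N$. Starting from the martingale decomposition of Lemma~\ref{variance slow martingales},
\[
Y'_j(\tau_{j+1})=e^{\int_{\xi_j}^{\tau_{j+1}}G_j(v)\mathrm dv}\left(Z'_j(\tau_{j+1})+\int_{\xi_j}^{\tau_{j+1}}\mu Y_{j-1}(u)e^{-\int_{\xi_j}^uG_j(v)\mathrm dv}\mathrm du\right),
\]
I would use Proposition~\ref{proposition tools} point~3 (for $W_{j-1}$) together with Lemma~\ref{control Y} (for $Y_{j-1}$) to replace $Y_{j-1}(u)$ by $\tfrac{Y_{j-1}(\tau_j)}{\lceil s/\mu\rceil}W_{j-1}(u)$ in the integrand. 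Writing $G_{j-1}=G_j-s$ from \eqref{Equation G_j with positive fitnesses}, the exponent collapses to $\int_{\tau_j}^{\xi_j}G_j(v)\mathrm dv-s(u-\tau_j)$, so after multiplying by $e^{\int_{\xi_j}^{\tau_{j+1}}G_j}$ the resulting error is at most $C\delta\lceil s/\mu\rceil\,\mu e^{\int_{\tau_j}^{\tau_{j+1}}G_j}\int_{\xi_j}^{\tau_{j+1}}e^{-s(u-\tau_j)}\mathrm du$, which is $O(\delta)\lceil s/\mu\rceil$ by Proposition~\ref{proposition tools} points~4 and~5. The remaining main term equals $e^{\int_{\xi_j}^{\tau_{j+1}}G_j}$ times $\tfrac{Y_{j-1}(\tau_j)}{\lceil s/\mu\rceil}$ times the drift of $\widetilde W_j$; by the martingale identity defining $\widetilde Z_j$ this is $\tfrac{Y_{j-1}(\tau_j)}{\lceil s/\mu\rceil}\big(\widetilde W_j(\tau_{j+1})-e^{\int_{\xi_j}^{\tau_{j+1}}G_j}\widetilde Z_j(\tau_{j+1})\big)$, whose first piece is precisely $Y_{j-1}(\tau_j)(1-S)$.

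It then remains to bound the two martingale remainders $e^{\int_{\xi_j}^{\tau_{j+1}}G_j}Z'_j(\tau_{j+1})$ and $e^{\int_{\xi_j}^{\tau_{j+1}}G_j}\widetilde Z_j(\tau_{j+1})$, and this is where I expect the real difficulty to lie. By Chebyshev's inequality and the variance bounds of Lemma~\ref{variance slow martingales}, the probability that either exceeds $\delta\lceil s/\mu\rceil$ is of order
\[
\frac{1}{\delta^2\lceil s/\mu\rceil^2}\,e^{2\int_{\xi_j}^{\tau_{j+1}}G_j(v)\mathrm dv}\,\frac{e^{\int_{\tau_j}^{\xi_j}G_j(v)\mathrm dv}}{sk_N^2}.
\]
Using $e^{\int_{\tau_j}^{\tau_{j+1}}G_j}$ of order $s/\mu$ (Proposition~\ref{proposition tools} point~5) and, from the definition \eqref{definition xi} of $\xi_j$ and Proposition~\ref{proposition tools} point~4, $e^{\int_{\tau_j}^{\xi_j}G_j}$ of order $e^{b}/(sq_j)$, this collapses to order $q_j/(\delta^2 e^{b}k_N^2)$, i.e. of order $1/(\delta^2 e^{b}k_N)$ since $q_j$ is of order $k_N$. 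The constant $b$ fixed in \eqref{definition of b} satisfies $e^{-b}=\delta^2\epsilon/(24000T)$, which turns this bound into $C\epsilon/k_N$; this is exactly the point of pushing $\xi_j$ a further additive $b/(sq_j)$ past $\tau_j$, shrinking the conditional variance of Lemma~\ref{variance slow martingales} while keeping $\intervalleff{\xi_j}{\tau_{j+1}}$ long enough for the estimates of Proposition~\ref{proposition tools}. Collecting the $O(\delta)$ errors into the stated $8\delta\lceil s/\mu\rceil$ and the exceptional probabilities into $C\epsilon/k_N$, and adding $\check Y^{\mathrm e}_j(\tau_{j+1})$ according to the group of the common ancestor, yields both displayed inequalities; the base case $j=j(2)$ is treated identically, with $Y_{j-1}(\tau_j)=y_N\lceil s/\mu\rceil$ fixed by the initial marking.
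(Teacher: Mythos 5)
Your proposal is correct and follows essentially the same route as the paper's proof: the same early/non-early decomposition via $Y'_j$ and $\widetilde{W}_j$, the same appeal to Schweinsberg's coupling (Lemma 7.5 of \cite{S152}) to ensure a single surviving early family, the same combination of the variance bounds of Lemma \ref{variance slow martingales} with Lemma \ref{control Y} and Proposition \ref{proposition tools}, and the same use of the constant $b$ from \eqref{definition of b} to turn the maximal-inequality bound of order $e^{-b}/(\delta^2 k_N)$ into $C\epsilon/k_N$. The only cosmetic difference is that you recover the main term $Y_{j-1}(\tau_j)(1-S)$ through the martingale identity for $\widetilde{Z}_j$ and the exact count $\widetilde{W}_j(\tau_{j+1})=(1-S)\lceil s/\mu\rceil$, where the paper instead divides by $\widetilde{W}_j(\tau_{j+1})$ and uses the ready-made two-sided bounds of Proposition \ref{proposition tools} point 2; this changes nothing of substance.
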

	
	\begin{proof}
		Recall the notation $\widetilde{W}_j$ and $Y'_j$ for the processes following the non-early type $j$ individuals, respectively the non-early type $(\check{Y},j)$-individuals.
		We write
		\begin{align*}
			\frac{Y'_j(\tau_{j+1})}{\widetilde{W}_j(\tau_{j+1})}
			&=\frac{e^{\int_{\xi_j}^{\tau_{j+1}}G_j(v)\mathrm{d}v}\left(\int_{\xi_j}^{\tau_{j+1}}\mu Y_{j-1}(u)e^{-\int_{\xi_j}^uG_j(v)\mathrm{d}v}\mathrm{d}u+Z_j'(\tau_{j+1})\right)}{\widetilde{W}_j(\tau_{j+1})},
		\end{align*}
		where $Z_j'$ is a martingale defined in Lemma \ref{variance slow martingales}.
		Using Proposition \ref{proposition tools} point 2 and Lemma \ref{control Y}, one has with probability at least $1-o(1/k_N)$ that
		\begin{align}\label{equation Y/X}
			\frac{Y_j'(\tau_{j+1})}{\widetilde{W}_j(\tau_{j+1})}
			&\leq\frac{e^{-\int_{\tau_j}^{\xi_j}G_j(v)\mathrm{d}v}}{1-4\delta}\Bigg(\left(\frac{Y_{j-1}(\tau_j)}{\lceil s/\mu\rceil}+\delta\right)\int_{\xi_j}^{\tau_{j+1}}se^{-s(u-\xi_j)}\mathrm{d}ue^{\int_{\tau_j}^{\xi_j}G_{j-1}(v)\mathrm{d}v}+Z_j'(\tau_{j+1})\Bigg).
		\end{align}
		We compute $\int_{\xi_j}^{\tau_{j+1}}se^{-s(u-\xi_j)}\mathrm{d}u=1-e^{-s(\tau_{j+1}-\xi_j)}$, and claim that this converges to 1 as $N\to\infty$.
		Indeed, we first note that, on the event $\{\zeta>\tau_{j+1}\}$, point 4 of Proposition \ref{proposition tools} entails that
		\begin{align}\label{Equation exp(tau-xi)}
			e^{-s(\xi_j-\tau_j)}
			=(sq_j)^{1/q_j}e^{-b/q_j}
			\underset{N\to\infty}{\longrightarrow}1.
		\end{align}
		We then write
		\begin{align*}
			e^{-s(\tau_{j+1}-\xi_j)}
			&=e^{-s(\tau_{j+1}-\tau_j)}e^{s(\xi_j-\tau_j)}
			=o(1/k_N),
		\end{align*}
		where we have used \eqref{Equation exp(sa_N/k_N)}.
		Coming back to \eqref{equation Y/X}, this gives
		\begin{align}\label{eqfrac}
			\frac{Y_j'(\tau_{j+1})}{\widetilde{W}_j(\tau_{j+1})}
			&\leq\frac{1}{1-4\delta}\left(\left(\frac{Y_{j-1}(\tau_j)}{\lceil s/\mu\rceil}+\delta\right)+e^{-\int_{\tau_j}^{\xi_j}G_{j}(v)\mathrm{d}v}Z_j'(\tau_{j+1})\right).
		\end{align}
		Applying Lemma \ref{variance slow martingales} and Doob's maximal inequality for squared integrable martingales, one has
		\begin{align*}
			&\mathbb{P}\left(\mathds{1}_{\{\zeta>\tau_{j+1}\}}\left|Z_j'(\tau_{j+1})\right|>\delta e^{\int_{\tau_j}^{\xi_j}G_{j}(v)\mathrm{d}v}\bigg|\mathcal{F}^N_{\xi_j}\right)\leq C\frac{e^{-\int_{\tau_j}^{\xi_j}G_{j}(v)\mathrm{d}v}}{\delta^2sk_N^2}.
		\end{align*}
		On the event $\{\zeta>\tau_{j+1}\}$ a double application of point 4 of Proposition \ref{proposition tools} gives
		\begin{align*}
			e^{-\int_{\tau_j}^{\xi_j}G_j(v)\mathrm{d}v}
			&\leq e^{-s(q_j-C_3)(\xi_j-\tau_j)}
			\leq sq_je^{-b}(1+\delta)
			\leq C sk_Ne^{-b}.
		\end{align*}
		(Since $sk_N\to 0$ as $N\to\infty$, the constant $C_3$ has been absorbed in $C$, which does not depend on the parameters $\epsilon,\delta,T$.)
		Hence,
		\begin{align*}
			&\mathbb{P}\left(\mathds{1}_{\{\zeta>\tau_{j+1}\}}\left|Z_j'(\tau_{j+1})\right|>\delta e^{\int_{\tau_j}^{\xi_j}G_j(v)\mathrm{d}v}\bigg|\mathcal{F}^N_{\xi_j}\right)
			\leq\frac{Ce^{-b}}{\delta^2k_N}
			\leq\frac{C\epsilon}{k_N},
		\end{align*}
		where we used \eqref{delta et epsilon} and \eqref{definition of b} for the last inequality.
		This result combined with \eqref{eqfrac} entails that, on the event $\{\zeta>\tau_{j+1}\}$, with a probability greater than $1-C\epsilon/k_N$:
		\begin{align*}
			\frac{Y_j'(\tau_{j+1})}{\widetilde{W}_j(\tau_{j+1})}
			\leq\frac{1}{1-4\delta}\left(\frac{Y_{j-1}(\tau_j)}{\lceil s/\mu\rceil}+2\delta\right)
			\leq\frac{Y_{j-1}(\tau_j)}{\lceil s/\mu\rceil}+7\delta.
		\end{align*}

		For the lower bound, the same reasoning as for the upper bound gives
		\begin{align*}
			\frac{Y'_j(\tau_{j+1})}{\widetilde{W}_j(\tau_{j+1})}
			\geq\frac{1}{1+4\delta}\left(\frac{Y_{j-1}(\tau_j)}{\lceil s/\mu\rceil}-3\delta\right)
			\geq\frac{Y_{j-1}(\tau_j)}{\lceil s/\mu\rceil}-8\delta.
		\end{align*}
		Since $\widetilde{W}(\tau_{j+1})=(1-S)\lceil s/\mu\rceil$, homogenizing the bounds, we get
		\begin{equation*}
		\frac{Y_{j-1}(\tau_j)}{\lceil s/\mu\rceil}-8\delta
		\leq\frac{Y_j'(\tau_{j+1})}{(1-S)\lceil s/\mu\rceil}
		\leq\frac{Y_{j-1}(\tau_j)}{\lceil s/\mu\rceil}+8\delta
		\end{equation*}
		To conclude, conditionally given $\mathcal{F}^N_{\tau_j}$, Lemma 7.5 in \cite{S152} bounds from above the probability that two early mutations survive by $2e^{2b}/q_j^2\leq 3e^{2b}/k_N^2$.
		Then, excluding this event, if there is an early mutation in the group $X$, easy calculations lead to
		\begin{align*}
			Y_{j-1}(\tau_j)(1-S)-8\delta\lceil s/\mu\rceil\leq &\check{Y}_j(\tau_{j+1})\leq Y_{j-1}(\tau_j)(1-S)+8\delta\lceil s/\mu\rceil,\\
			S\lceil s/\mu\rceil+X_{j-1}(\tau_j)(1-S)-8\delta\lceil s/\mu\rceil\leq &X_j(\tau_{j+1})-\check{X}_j(\tau_{j+1})\\
			&\hspace{1cm}\leq S\lceil s/\mu\rceil+X_{j-1}(\tau_j)(1-S)+8\delta\lceil s/\mu\rceil.
		\end{align*}
		The cases where the early mutant is a $Y$-individual is identical, which concludes the proof.
	\end{proof}
	
\subsubsection{The discrete proportions process}

To ensure that our description of the evolution of the proportions is accurate enough, we introduce a stopped discrete time process as follows.
For all $j\in I,$ $j\geq j(2)-1$, we define
\begin{align*}
	\mathsf{Y}^N_j:=\frac{Y_{j\wedge (j(\zeta)-1)}(\tau_{(j+1)\wedge j(\zeta)})}{\lceil s/\mu\rceil},
\end{align*}
which follows the proportion of $Y$-individuals among the fittest at each time $\tau_j$ stopped at the last type $j$ before $j(\zeta)$ (recall that $j(\zeta)$ is the largest $j$ such that $\tau_j\leq a_N\zeta$).
The reason to stop the process at $j(\zeta)$ is to ensure that the results above and in particular Proposition \ref{proposition tools} apply to $\mathsf{Y}_j$.

Similarly, we denote by $\check{\mathsf{Y}}^N_j$ the process defined with $\check{Y}_{j\wedge (j(\zeta)-1)}(\tau_{(j+1)\wedge j(\zeta)})$ in place of $Y_{j\wedge (j(\zeta)-1)}(\tau_{(j+1)\wedge j(\zeta)})$, i.e. the process following the proportion of $(Y,j)$-individuals where the killings of the type $j$ mutations have been cancelled (and only those ones).
We stress that the event $\{\zeta>\tau_{j+1}\}$ is included in the above definition, in the sense that the process stops evolving at the last $\tau_j\leq\zeta$, and this fact will be kept implicit when working with $\mathsf{Y}$ or $\check{\mathsf{Y}}$.
	
We now give a lemma controlling the two first moments of the proportions' increments in the absence of weak selection, when there is no early mutation, or one that does not generate a too large family.
	
\begin{lem}\label{Lemma moments step proportion}
	Let $S_j$ be the proportion of early type $j$ individuals at time $\tau_{j+1}$ among the type $j$ individuals (potentially, $S_j$ can be $0$). 
	For all $j\in I,$ $j\geq j(2)$, it holds that
	\begin{align*}
		\mathbb{E}\left(\check{\mathsf{Y}}^N_j-\mathsf{Y}^N_{j-1}\Big|\mathcal{F}^N_{\tau_j}\right)
		&=o\left(1/k_N\right),\\
		\mathbb{E}\left(\mathds{1}_{\{S_j\leq \epsilon\}}\left(\check{\mathsf{Y}}^N_j-\mathsf{Y}^N_{j-1}\right)\Big|\mathcal{F}^N_{\tau_j}\right)
		&=o\left(1/k_N\right),\\
		\mathbb{E}\left(\mathds{1}_{\{S_j\leq\epsilon\}}\left(\check{\mathsf{Y}}^N_j-\mathsf{Y}^N_{j-1}\right)^2\Big|\mathcal{F}^N_{\tau_j}\right)
		&\leq \frac{C\epsilon}{k_N}.
	\end{align*}
\end{lem}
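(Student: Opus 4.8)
The plan is to write the one-step increment as $\check{\mathsf{Y}}^N_j-\mathsf{Y}^N_{j-1}=\frac{1}{\lceil s/\mu\rceil}\big(\check{Y}_j(\tau_{j+1})-Y_{j-1}(\tau_j)\big)$ and to exploit the symmetry between the groups $X$ and $Y$. Since $\check{\mathsf{Y}}$ cancels exactly the killings of the type $j$ mutations, on $\intervalleff{\tau_j}{\tau_{j+1}}$ the two groups undergo identical color-blind dynamics; hence, conditionally on the genealogy of the type $j$ individuals (which type $(j-1)$ individual each of them descends from, and in particular the family-size profile and $S_j$), the group of each founding type $(j-1)$ individual is $Y$ with probability equal to the proportion $Y_{j-1}(t)/W_{j-1}(t)$ of $(Y,j-1)$ individuals at its mutation time $t$. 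By Lemma \ref{control Y} this proportion stays within $\delta$ of $\mathsf{Y}^N_{j-1}$ on $\intervalleff{\tau_j}{\tau_{j+1}\wedge\zeta}$ with probability $1-o(1/k_N)$, and \eqref{Equation int 1-alpha} shows the $(j-2)$-influx biases its mean only by $o(1/k_N)$. Thus, to leading order, conditioning on the genealogy colors each founding family by an independent $\mathsf{Y}^N_{j-1}$-Bernoulli variable.

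For the first two estimates I would condition on this genealogy. Writing $y:=\mathsf{Y}^N_{j-1}$ and letting $w_1,w_2,\dots$ be the proportions of the distinct founding families (with $\sum_k w_k=1$), the Bernoulli coloring gives $\mathbb{E}(\check{\mathsf{Y}}^N_j\mid\text{genealogy})=y\sum_k w_k+o(1/k_N)=y+o(1/k_N)$, whence $\mathbb{E}(\check{\mathsf{Y}}^N_j-\mathsf{Y}^N_{j-1}\mid\mathcal{F}^N_{\tau_j})=o(1/k_N)$; integrating the identical conditional bound against the law of $S_j$ yields both the unconditional estimate and its restriction to $\{S_j\le\epsilon\}$. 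In the language of Lemma \ref{Lemma Y after early mutation} this is the cancellation that, conditionally on a surviving early family of proportion $S_j$, its ancestor lies in $X$ with probability $\approx 1-y$ (increment $\approx-yS_j$) and in $Y$ with probability $\approx y$ (increment $\approx(1-y)S_j$), so the conditional mean increment is $y(1-y)S_j-(1-y)yS_j=0$ up to $o(1/k_N)$. It is essential to read this mean off the mean-zero martingale $Z_j'$ of Lemma \ref{variance slow martingales} rather than from the pathwise $8\delta$ bound of Lemma \ref{Lemma Y after early mutation}, which would only give an $O(\delta)$ bias.

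For the second moment I would bound, on $\{S_j\le\epsilon\}$, the conditional variance of the same Bernoulli coloring, namely $\mathrm{Var}(\check{\mathsf{Y}}^N_j\mid\text{genealogy})\approx y(1-y)\sum_k w_k^2$. The non-early families contribute through the variance of $Z_j'$: by Lemma \ref{variance slow martingales}, after the scaling by $e^{-\int_{\tau_j}^{\xi_j}G_j(v)\mathrm{d}v}$ appearing in \eqref{eqfrac} and the bound $e^{-\int_{\tau_j}^{\xi_j}G_j(v)\mathrm{d}v}\le Csk_Ne^{-b}$ together with \eqref{definition of b}, this is at most $Ce^{-b}/k_N\le C\epsilon/k_N$. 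For the early part, on the good event at most one family survives, so $\sum_k w_k^2=S_j^2$; by the Bolthausen-Sznitman approximation $q_jp_{S_j}(\mathrm{d}x)\approx\mathrm{d}x/x^2$ of Lemma \ref{Lemma convergence proportion family} (extended to small $x$, where $x^2\cdot x^{-2}=1$ is integrable) one gets $\mathbb{E}(\mathds{1}_{\{0<S_j\le\epsilon\}}S_j^2)\approx q_j^{-1}\int_0^\epsilon\mathrm{d}x=\epsilon/q_j=O(\epsilon/k_N)$. Adding the contribution of the event where Lemma \ref{Lemma Y after early mutation} fails, of probability $\le C\epsilon/k_N$ and on which the increment is trivially at most $1$, gives the claimed $C\epsilon/k_N$.

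The hard part is the first moment. The pathwise control of Lemma \ref{Lemma Y after early mutation} carries an irreducible $O(\delta)$ error, while the statement demands $o(1/k_N)$; closing this gap requires the exact cancellation above, i.e. showing that conditionally on the color-blind genealogy the expected proportion of $Y$ among the type $j$ individuals equals $\mathsf{Y}^N_{j-1}$ up to $o(1/k_N)$. This rests on using the mean-zero property of the martingales instead of their fluctuation bounds, and on checking that both the approximation $Y_{j-1}(t)/W_{j-1}(t)\approx\mathsf{Y}^N_{j-1}$ of Lemma \ref{control Y} and the neglected type $(j-2)$ influx perturb the mean at order $o(1/k_N)$ uniformly in the realized family-size profile.
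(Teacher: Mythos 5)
Your instinct that the proof must hinge on an \emph{exact} color-symmetry cancellation, not on the pathwise $O(\delta)$ bounds of Lemma \ref{Lemma Y after early mutation}, is precisely the conceptual core of the paper's argument. But your implementation of that cancellation does not close, and you say so yourself (``this rests on\ldots checking that\ldots''). First, the mechanism you propose for the first moment --- reading the mean off the mean-zero martingale $Z_j'$ combined with Lemma \ref{control Y} --- cannot deliver $o(1/k_N)$: to pass from $Z_j'$ to $\check{Y}_j(\tau_{j+1})/\lceil s/\mu\rceil$ you must multiply by the random factor $e^{\int_{\xi_j}^{\tau_{j+1}}G_j(v)\mathrm{d}v}$ evaluated at the stopping time $\tau_{j+1}$ and replace $Y_{j-1}(u)$ by $Y_{j-1}(\tau_j)e^{\int_{\tau_j}^{u}G_{j-1}(v)\mathrm{d}v}$, and both steps are accurate only to a multiplicative $O(\delta)$ (Proposition \ref{proposition tools} points 2--5), so the $O(\delta)$ bias you are trying to kill re-enters. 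Moreover your ``independent $\mathsf{Y}^N_{j-1}$-Bernoulli coloring of founders at their mutation times'' is not the correct conditional law: the colors are $\mathcal{F}^N_{\tau_j}$-measurable and propagate deterministically along the genealogy; the randomness lies in \emph{which} type $j-1$ individual at $\tau_j$ each lineage traces back to. The paper exploits exactly this: since $W_j(\tau_{j+1})=\lceil s/\mu\rceil$ is deterministic, it samples a uniform type $j$ individual at $\tau_{j+1}$ and observes that, by exchangeability of the type $j-1$ individuals at $\tau_j$ under the color-blind dynamics, its ancestor --- conditionally on being of type $j-1$ --- is uniform among those $\lceil s/\mu\rceil$ individuals and independent of $S_j$, so the conditional probability of landing in a good $\check{Y}$ individual is \emph{exactly} $Y_{j-1}(\tau_j)/\lceil s/\mu\rceil$. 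The whole error is then the non-good population $K_j$ (individuals whose $\tau_j$-ancestor is not of type $j-1$), bounded in expectation via Lemma 6.3 of \cite{S152} by $(s/\mu)^{1-1/3k_N}$, whence $(\mu/s)^{1/3k_N}=o(1/k_N)$ by $(A_2)$ --- an input your proposal never identifies (your citation of \eqref{Equation int 1-alpha} concerns the evolution of $Y_{j-1}$, not the type $j$ individuals descending from non-$(j-1)$ ancestors).

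The second moment has two further genuine gaps. (a) Founder colors are \emph{not} independent given the family-size profile: two distinct founding lineages (early or not) can coalesce within the type $j-1$ class back to a common ancestor at $\tau_j$, so your variance $y(1-y)\sum_k w_k^2$ omits the cross-term $\sum_{k\neq k'}w_kw_{k'}\,\mathbb{P}(\text{common }\tau_j\text{-ancestor})$. The paper controls exactly this by a two-sample argument, importing from \cite{S152} (Lemmas 6.5, 6.6 and Equation (8.16) in Lemma 8.8) that two independently sampled type $j$ individuals at $\tau_{j+1}$ share a $\tau_j$-ancestor, on $\{S_j\leq\epsilon\}$, with probability at most $C\epsilon/k_N$; your proposal has no substitute for this input. (b) Your bound $\mathbb{E}(S_j^2\mathds{1}_{\{0<S_j\leq\epsilon\}})\approx\epsilon/q_j$ requires extending $q_jp_{S_j}(\mathrm{d}x)\approx\mathrm{d}x/x^2$ down to $x\in\intervalleoo{0}{\epsilon}$, but Lemma \ref{Lemma convergence proportion family} (via Lemma 7.8 of \cite{S152}) only controls the tail on $\intervalleof{\epsilon}{1-\delta}$; below $\epsilon$ the only available estimate is $\mathbb{P}(S_j>0)\leq Ce^b/q_j$, and since $e^b=24000T/(\delta^2\epsilon)$ with $\delta<\epsilon^3$, the crude bound $\epsilon^2e^b/q_j$ is much larger than $\epsilon/k_N$. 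Tellingly, the paper's proof of this lemma never touches the law of $S_j$ at all: the event $\{S_j\leq\epsilon\}$ enters only through the coalescence bound in (a), which is the cleaner route.
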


	\begin{proof}
	Throughout the proof, we say \textit{population} for the population of individuals for which the weak selection between the last time interval $\intervalleff{\tau_j}{\tau_{j+1}}$ has been cancelled (but for which the killings that occured before $\tau_j$ are kept).
	We will thus speak of type $(\check{Y},j)$-individuals.
	
	Fix $j\in I,$ $j\geq j(2)$.
	We first note that
	\begin{align*}
		\check{\mathsf{Y}}^N_j-\mathsf{Y}^N_{j-1}=\mathds{1}_{\{\zeta>\tau_{j+1}\}}\frac{\check{Y}_j(\tau_{j+1})-Y_{j-1}(\tau_{j})}{\lceil s/\mu\rceil}.
	\end{align*}
	We call a $Y$-individual of type $j$ at time $\tau_{j+1}$ \textit{good} if his ancestor at time $\tau_{j}$ is of type $j-1$.
	We denote by $\widehat{Y}_j(\tau_{j+1})$ the number of good $\check{Y}$ individuals at time $\tau_{j+1}$, and $K_{j}$, respectively $K_{\check{Y},j}$ the number of type $j$ individuals in the population, respectively of type $(\check{Y},j)$, at time $\tau_{j+1}$ that are not good.
	We have
	\begin{align*}
		\check{Y}_j(\tau_{j+1})
		&=K_{\check{Y},j} + \widehat{Y}_j(\tau_{j+1}).
	\end{align*}
	We pick an individual uniformly at random among the $\lceil s/\mu\rceil$ individuals of type $j$ at time $\tau_{j+1}$.
	Note that he belongs to the group of good $\check{Y}$-individuals if and only if his ancestor at time $\tau_{j}$ is in the group $Y$ with type $j-1$, and we call this event $B$.
	Let $j_{\mathrm{anc}}\in\mathbb{N}$ be the type of his ancestor at time $\tau_{j}$.
	We have in particular that $\mathbb{P}(B|\mathcal{F}^N_{\tau_j},j_{anc}=j-1, \zeta>\tau_{j+1}, S\leq\epsilon)=Y_{j-1}(\tau_j)/\lceil s/\mu\rceil$, since given this conditioning, his ancestor is independent from $S_j$ and chosen uniformly at random among the $\lceil s/\mu\rceil$ individuals of type $j-1$ at time $\tau_{j}$.
	Using that $\{\zeta>\tau_{j+1},S_j\leq\epsilon\}$ is $\mathcal{F}^N_{\tau_{j+1}}$-measurable, we then write
	\begin{align*}
		&\mathbb{E}\left(\mathds{1}_{\{\zeta>\tau_{j+1},S_j\leq\epsilon\}}\frac{\widehat{Y}_j(\tau_{j+1})}{\lceil s/\mu\rceil}\Big|\mathcal{F}^N_{\tau_j}\right)\\
		&\hspace{1cm}=\mathbb{E}\left(\mathds{1}_{\{\zeta>\tau_{j+1},S_j\leq\epsilon\}}\mathbb{P}\left(B\big|\mathcal{F}^N_{\tau_{j+1}}\right)\Big|\mathcal{F}^N_{\tau_j}\right)
		=\mathbb{P}\left(B\cap\{\zeta>\tau_{j+1},S_j\leq\epsilon\}\Big|\mathcal{F}^N_{\tau_j}\right)\\
		&\hspace{1cm}=\mathbb{P}\left(B\Big|\mathcal{F}^N_{\tau_j},j_{anc}=j-1,\zeta>\tau_{j+1},S_j\leq\epsilon\right)\mathbb{P}\left(j_{anc}=j-1,\zeta>\tau_{j+1},S_j\leq\epsilon\Big|\mathcal{F}^N_{\tau_j}\right)\\
		&\hspace{1cm}=\frac{Y_{j-1}(\tau_j)}{\lceil s/\mu\rceil}\mathbb{P}\left(j_{anc}=j-1,\zeta>\tau_{j+1},S_j\leq\epsilon\Big|\mathcal{F}^N_{\tau_j}\right).
	\end{align*}
	\begingroup\allowdisplaybreaks
	Basic properties of probability measures entail that
	\begin{align*}
		&\frac{Y_{j-1}(\tau_j)}{\lceil s/\mu\rceil}\left(\mathbb{P}(\zeta>\tau_{j+1},S_j\leq\epsilon|\mathcal{F}^N_{\tau_j})-\mathbb{P}(\zeta>\tau_{j+1},j_{\mathrm{anc}}\neq j-1|\mathcal{F}^N_{\tau_j})\right)\\
		&\hspace{4cm}\leq\mathbb{E}\left(\mathds{1}_{\{S_j\leq\epsilon\}}\frac{\widehat{Y}_j(\tau_{j+1})}{\lceil s/\mu\rceil}\Big|\mathcal{F}^N_{\tau_j}\right)
		\leq\frac{Y_{j-1}(\tau_j)}{\lceil s/\mu\rceil}\mathbb{P}(\zeta>\tau_{j+1},S_j\leq\epsilon|\mathcal{F}^N_{\tau_j}).
	\end{align*}
	Since $\mathbb{P}(\zeta>\tau_{j+1},j_{\mathrm{anc}}\neq j-1|\mathcal{F}^N_{\tau_j})=\mathbb{E}(\mathds{1}_{\{\zeta>\tau_{j+1}\}}K_j/\lceil s/\mu\rceil|\mathcal{F}^N_{\tau_j})$, we have shown that
	\begin{align}\label{Equation bound on step}
		\left|\mathbb{E}\left(\mathds{1}_{\{S_j\leq \epsilon\}}\left(\check{\mathsf{Y}}^N_j-\mathsf{Y}^N_{j-1}\right)\Big|\mathcal{F}^N_{\tau_j}\right)\right|
		&\leq \mathbb{E}\left(\mathds{1}_{\{\zeta>\tau_{j+1}\}}\frac{K_{\check{Y},j}}{\lceil s/\mu\rceil}\Big|\mathcal{F}^N_{\tau_j}\right)+\mathbb{P}\left(\zeta>\tau_{j+1},j_{\mathrm{anc}}\neq j-1|\mathcal{F}^N_{\tau_j}\right)\nonumber\\
		&\leq 2\mathbb{E}\left(\mathds{1}_{\{\zeta>\tau_{j+1}\}}\frac{K_{j}}{\lceil s/\mu\rceil}\Big|\mathcal{F}^N_{\tau_j}\right)\leq \left(\frac{\mu}{s}\right)^{1/3k_N},
	\end{align}
	where the last inequality is from \cite{S151} Lemma 6.3 and, taking the logarithm and using assumption $A_2$, one can show that the bound is $o(k_N)$.\endgroup
	\ The proof for the same bound without $\mathds{1}_{\{S_j\leq\epsilon\}}$ is identical, as this event played no particular role in the above computations.\\
	We turn our attention to the moment of order two.
	Suppose now that we independently sample two individuals, possibly the same, uniformly at random among the $\lceil s/\mu\rceil$ individuals of type $j$ at time $\tau_{j+1}$.
	Denote $j_{\mathrm{anc}}$ and $j_{\mathrm{anc}}'$ the types of their respective ancestors at time $\tau_{j}$ and let $B'$ be the event that they both belong to the good $\check{Y}$ group.
	Let $D$ be the event that the two ancestors are different with $j_{\mathrm{anc}}=j_{\mathrm{anc}}'=j-1$.
	In particular, given $D$, the ancestor of the first individual is chosen uniformly at random among the $\lceil s/\mu\rceil$ individuals of type $j-1$ at time $\tau_{j}$, and then the ancestor of the second one is chosen uniformly at random among the $\lceil s/\mu\rceil-1$ that remain, the two ancestors being independent from $S$.
	We get that
	\begin{align*}
		&\mathbb{E}\left(\mathds{1}_{\{\zeta>\tau_{j+1},S_j\leq\epsilon\}}\left(\frac{\widehat{Y}_j(\tau_{j+1})}{\lceil s/\mu\rceil}\right)^2\Big|\mathcal{F}^N_{\tau_j}\right)
		=\mathbb{P}\left(B'\cap\{\zeta>\tau_{j+1},S_j\leq\epsilon\}\Big|\mathcal{F}^N_{\tau_j}\right)\\
		&\hspace{2cm}=\mathbb{P}\left(B'\Big|\mathcal{F}^N_{\tau_j},D\cap\{\zeta>\tau_{j+1},S_j\leq\epsilon\}\right)\mathbb{P}\left(D\cap\{\zeta>\tau_{j+1},S_j\leq\epsilon\}\Big|\mathcal{F}^N_{\tau_j}\right)\\
		&\hspace{7cm}+\mathbb{P}\left(B'\cap D^c\cap\{\zeta>\tau_{j+1},S_j\leq\epsilon\}\Big|\mathcal{F}^N_{\tau_j}\right)\\
		&\hspace{2cm}\leq\left(\frac{Y_{j-1}(\tau_j)}{\lceil s/\mu\rceil}\right)^2\mathbb{P}(\zeta>\tau_{j+1},S_j\leq\epsilon|\mathcal{F}^N_{\tau_j})+\mathbb{P}\left(B'\cap D^c\cap\{\zeta>\tau_{j+1},S_j\leq\epsilon\}\Big|\mathcal{F}^N_{\tau_j}\right).
	\end{align*}
	Note that $B'\cap D^c$ is included in the event that the two sampled individuals have the same ancestor of type $j-1$ at time $\tau_{j}$.
	The probability to pick twice the same individual is $1/\lceil s/\mu\rceil$.
	The probability that two different individuals have the same ancestor on the event $\{S_j\leq\epsilon\}$ is bounded in \cite{S152}.
	More precisely, Lemmas 6.5, 6.6 and Equation (8.16) in Lemma 8.8 together show that $\mathbb{P}(B'\cap D^c\cap\{\zeta>\tau_{j+1},S_j\leq\epsilon\}|\mathcal{F}^N_{\tau_j})\leq C\epsilon/k_N$.
	%% Lemmas 7.4 and 7.7 to bound proba of A_j^c and \kappa_j in the proof.
	We obtain that
	\begin{align*}
		&\mathbb{E}\left(\mathds{1}_{\{S_j\leq\epsilon\}}\left(\check{\mathsf{Y}}^N_j-\mathsf{Y}^N_{j-1}\right)^2\Big|\mathcal{F}^N_{\tau_j}\right)\\
		&\hspace{1cm}\leq 2\mathbb{E}\left(\mathds{1}_{\{\zeta>\tau_{j+1}\}}\left(\frac{K_j}{\lceil s/\mu\rceil}\right)^2\Big|\mathcal{F}^N_{\tau_j}\right)\\
		&\hspace{1.5cm}+2\mathbb{E}\left(\mathds{1}_{\{\zeta>\tau_{j+1},S_j\leq\epsilon\}}\frac{1}{\lceil s/\mu\rceil^2}\left(\widehat{Y}_j(\tau_{j+1})^2-2\widehat{Y}_j(\tau_{j+1})Y_{j-1}(\tau_j)+Y_{j-1}(\tau_j)^2\right)\Big|\mathcal{F}^N_{\tau_j}\right)\\
		&\hspace{1cm}\leq \frac{C\epsilon}{k_N},
	\end{align*}
	as claimed.
\end{proof}

\subsubsection{Weak selection}

For Lemma \ref{Lemma moments step proportion} to be useful, it needs to be combined with a description of the number of type $j$ individuals at time $\tau_{j+1}$ descending from killings between $\intervalleff{\tau_j}{\tau_{j+1}}$, that is $\check{X}_j(\tau_{j+1})$.
Thanks to Lemma \ref{Lemma double killings negligeable}, the expected effect of the weak selection on the proportions from $\tau_j$ to $\tau_{j+1}$ can be estimated:

\begin{lem}\label{Lemma expectation weak selection effect}
	For all $j\in I$, $j\geq j(2)$, on the event $\{\zeta>\tau_j\}$, it holds that
	\begin{align*}
		&\mathbb{E}\left(\mathds{1}_{\{\zeta>\tau_{j+1}\}}e^{-\int_{\tau_j}^{\tau_{j+1}}G_j(v)\mathrm{d}v}\check{X}_j(\tau_{j+1})\Big|\mathcal{F}^N_{\tau_j}\right)=\frac{\alpha}{q_j}\frac{X_{j-1}(\tau_j)Y_{j-1}(\tau_j)}{\lceil s/\mu\rceil^2}+O\left(\frac{\epsilon}{k_N}\right),\\
		&\mathbb{E}\left(\mathds{1}_{\{\zeta>\tau_{j+1}\}}\frac{\check{X}_j(\tau_{j+1})}{\lceil s/\mu\rceil}\Big|\mathcal{F}^N_{\tau_j}\right)=\frac{\alpha}{q_j}\frac{X_{j-1}(\tau_j)Y_{j-1}(\tau_j)}{\lceil s/\mu\rceil^2}+O\left(\frac{\epsilon}{k_N}\right),\\
		&\mathbb{E}\left(\mathds{1}_{\{\zeta>\tau_{j+1}\}}\frac{\check{X}_j(\tau_{j+1})^2}{\lceil s/\mu\rceil^2}\Big|\mathcal{F}^N_{\tau_j}\right)=o\left(\frac{1}{k_N}\right).
	\end{align*}
\end{lem}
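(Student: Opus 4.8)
The plan is to read all three estimates off the martingale $\check{Z}^X_j$ from Lemma \ref{variance slow martingales}, whose drift already encodes the rate at which killings feed $\check{X}_j$. The observation driving everything is that on $\{\zeta>\tau_j'\}$ one has $\check{X}_j(\tau_j')=0$ (the discussion after Lemma \ref{Lemma no early killing}), and that the instantaneous killing rate $\mu\frac{\alpha}{q_j}\frac{X_{j-1}(u)Y_{j-1}(u)}{W_{j-1}(u)}$ is a constant multiple of the total mutation rate $\mu W_{j-1}(u)$ feeding type $j$. Indeed, by Lemma \ref{control Y} together with Proposition \ref{proposition tools} point 3, each of $X_{j-1}(u),Y_{j-1}(u),W_{j-1}(u)$ equals, up to relative error $O(\delta)$, its value at $\tau_j$ times the common factor $e^{\int_{\tau_j}^u G_{j-1}(v)\mathrm{d}v}$; hence $\frac{X_{j-1}(u)Y_{j-1}(u)}{W_{j-1}(u)^2}$ is frozen at $\frac{X_{j-1}(\tau_j)Y_{j-1}(\tau_j)}{\lceil s/\mu\rceil^2}$ and the killing rate equals $c_j\,\mu W_{j-1}(u)$, where $c_j:=\frac{\alpha}{q_j}\frac{X_{j-1}(\tau_j)Y_{j-1}(\tau_j)}{\lceil s/\mu\rceil^2}$ is exactly the announced main term.

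For the first identity I would take conditional expectations in $\check{Z}^X_j$ at $\tau_{j+1}\wedge\zeta$; as $\check{Z}^X_j$ is centred and vanishes at $\tau_j'$, this equates $\mathbb{E}(\mathds{1}_{\{\zeta>\tau_{j+1}\}}e^{-\int_{\tau_j'}^{\tau_{j+1}}G_j}\check{X}_j(\tau_{j+1})|\mathcal{F}^N_{\tau_j'})$ with the expected drift $\mathbb{E}(A|\mathcal{F}^N_{\tau_j'})$, $A=\int_{\tau_j'}^{\tau_{j+1}}\mu\frac{\alpha}{q_j}\frac{X_{j-1}Y_{j-1}}{W_{j-1}}e^{-\int_{\tau_j'}^uG_j}\mathrm{d}u$. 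Using $G_j(v)=G_{j-1}(v)+s$ from \eqref{Equation G_j with positive fitnesses} and the frozen ratio, the integrand collapses to $c_j\,s\,e^{\int_{\tau_j}^{\tau_j'}G_{j-1}}e^{-s(u-\tau_j')}$, so $A\approx c_j\,e^{\int_{\tau_j}^{\tau_j'}G_{j-1}}$, the boundary term $e^{-s(\tau_{j+1}-\tau_j')}$ being negligible by \eqref{Equation exp(sa_N/k_N)}. Multiplying by the $\mathcal{F}^N_{\tau_j'}$-measurable factor $e^{-\int_{\tau_j}^{\tau_j'}G_j}$ to restore the exponent of the statement, and using $e^{-s(\tau_j'-\tau_j)}\to1$ (since $s(\tau_j'-\tau_j)=\tfrac{3}{q_j}\log\tfrac{1}{sq_j}\to0$), produces $c_j$; the tower step over $\mathcal{F}^N_{\tau_j}$ is harmless because $c_j$ is $\mathcal{F}^N_{\tau_j}$-measurable. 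The $O(\delta)$ errors of Lemma \ref{control Y} and Proposition \ref{proposition tools} point 3 become an additive $O(\delta/k_N)\subseteq O(\epsilon/k_N)$ since $c_j=O(1/k_N)$ and $\delta<\epsilon$ by \eqref{delta et epsilon}, the failure probabilities contributing only $o(1/k_N)$.

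For the second identity I would use that $W_j(\tau_{j+1})=\lceil s/\mu\rceil$ by definition of $\tau_{j+1}$, so $\frac{\check{X}_j(\tau_{j+1})}{\lceil s/\mu\rceil}=\frac{\check{X}_j(\tau_{j+1})}{W_j(\tau_{j+1})}$ is a within-type-$j$ proportion; as killed and non-killed type $j$ individuals share the rates $B_j,D_j$, the common growth factor $e^{\int_{\tau_j'}^{\tau_{j+1}}G_j}$ cancels between numerator and denominator. Decomposing $W_j(\tau_{j+1})$ through its own martingale from $\tau_j'$ yields denominator $W_j(\tau_j')+\int_{\tau_j'}^{\tau_{j+1}}\mu W_{j-1}e^{-\int_{\tau_j'}^uG_j}\mathrm{d}u+(\text{small})$, while the numerator is $A+\check{Z}^X_j$ with $A=c_j\int_{\tau_j'}^{\tau_{j+1}}\mu W_{j-1}e^{-\int_{\tau_j'}^uG_j}\mathrm{d}u$ by the proportionality of the influxes. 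The crucial point is that the seed $W_j(\tau_j')$ is negligible against this mutation integral: running the same computation over the short window $\intervalleff{\tau_j}{\tau_j'}$ gives $W_j(\tau_j')\approx e^{\int_{\tau_j}^{\tau_j'}G_j}\,s(\tau_j'-\tau_j)$, smaller by the vanishing factor $s(\tau_j'-\tau_j)$ than the integral $\approx e^{\int_{\tau_j}^{\tau_j'}G_{j-1}}$. Hence the ratio reduces to $c_j+O(\epsilon/k_N)$ once $\check{Z}^X_j$ (small with high probability by its variance and Doob) and the seed are discarded.

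Finally, for the second moment I would write $\frac{\check{X}_j(\tau_{j+1})}{\lceil s/\mu\rceil}\approx c_j+\frac{\check{Z}^X_j(\tau_{j+1})}{\int_{\tau_j'}^{\tau_{j+1}}\mu W_{j-1}e^{-\int_{\tau_j'}^uG_j}\mathrm{d}u}$, square it, and take expectations: the cross term vanishes as $\check{Z}^X_j$ is centred, $c_j^2=O(1/k_N^2)=o(1/k_N)$, and the remaining term is bounded by $\mathrm{Var}(\check{Z}^X_j)$ divided by $e^{2\int_{\tau_j}^{\tau_j'}G_j}$, which by Lemma \ref{variance slow martingales} is $\frac{C}{sk_N^3\,e^{\int_{\tau_j}^{\tau_j'}G_j}}=O(s^2)=o(1/k_N)$, using $e^{\int_{\tau_j}^{\tau_j'}G_j}\asymp(sq_j)^{-3}$ from Proposition \ref{proposition tools} point 4 and $s^2k_N\to0$. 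The main obstacle is exactly the reconciliation of the first two identities, which must both deliver the bare constant $c_j$ despite their different normalisations (by $e^{-\int_{\tau_j}^{\tau_{j+1}}G_j}$ versus by $\lceil s/\mu\rceil$): this forces one to show that the growth accumulated by type $j$ over $\intervalleff{\tau_j}{\tau_j'}$ — both the seed $W_j(\tau_j')$ and the vanishing of $\check{X}_j$ at $\tau_j'$ — is negligible, so that the killing fraction among type $j$ is set entirely by the influx over $\intervalleff{\tau_j'}{\tau_{j+1}}$, with the approximation and fluctuation errors kept inside the $O(\epsilon/k_N)$ and $o(1/k_N)$ budgets.
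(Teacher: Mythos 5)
Your treatment of the first and third identities follows essentially the paper's own route: you represent $e^{-\int_{\tau_j}^{\tau_{j+1}}G_j(v)\mathrm{d}v}\check{X}_j(\tau_{j+1})$ through the martingale $\check{Z}_j^X$ of Lemma \ref{variance slow martingales} with $\check{X}_j(\tau_j')=0$, freeze the ratio $X_{j-1}Y_{j-1}/W_{j-1}$ via Lemma \ref{control Y} and Proposition \ref{proposition tools} point 3, dispose of the boundary factors via $e^{-s(\tau_j'-\tau_j)}\to 1$ and \eqref{Equation exp(sa_N/k_N)}, and get $O(s^2)=o(1/k_N)$ for the quadratic term from the variance bound, exactly as in \eqref{Equation check X} and the surrounding computations. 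One small gloss there: $\mathds{1}_{\{\zeta>\tau_{j+1}\}}\check{Z}_j^X(\tau_{j+1})$ is not exactly centred because of the indicator (it differs from the stopped martingale by the term $\mathds{1}_{\{\zeta\leq\tau_{j+1}\}}\check{Z}_j^X(\zeta)$); the paper controls this with Cauchy--Schwarz against the variance bound, and your sketch should do the same — easily repaired.

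The genuine gap is in your second identity. You convert between the two normalisations by decomposing the denominator $W_j(\tau_{j+1})=\lceil s/\mu\rceil$ as seed plus mutation influx plus a ``small'' martingale, and you claim the seed $W_j(\tau_j')\approx e^{\int_{\tau_j}^{\tau_j'}G_j(v)\mathrm{d}v}\,s(\tau_j'-\tau_j)$ is negligible against the influx. Both claims fail on the event $\{S_j>0\}$ that an early type $j$ mutation survives: there the seed at $\tau_j'$ contains the nascent early family, which by time $\tau_{j+1}$ makes up a macroscopic proportion $S_j$ of the type, so the seed is comparable to or dominates the influx and the ratio is approximately $c_j(1-S_j)$ rather than $c_j$. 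This event has conditional probability of order $e^b/q_j\asymp 1/k_N$ (Lemma 7.8 in \cite{S152}) — the \emph{same} order as the main term — so it cannot be swept into a high-probability discard: with only the trivial bound $\check{X}_j(\tau_{j+1})/\lceil s/\mu\rceil\leq 1$, its contribution is $O(e^b/k_N)$, which, since $e^b=24000T/(\delta^2\epsilon)$ by \eqref{definition of b}, swamps the target $O(\epsilon/k_N)$ with a universal constant. The paper sidesteps your decomposition entirely: it multiplies \eqref{Equation check X} by $\mathds{1}_{\{S_j=0\}}$, on which Proposition \ref{proposition tools} point 2 gives $\lceil s/\mu\rceil=\widetilde{W}_j(\tau_{j+1})\in(1\pm 4\delta)e^{\int_{\tau_j}^{\tau_{j+1}}G_j(v)\mathrm{d}v}$ directly (no seed/influx analysis needed), and on $\{S_j>0\}$ it uses the cruder point 5 bound $\check{X}_j(\tau_{j+1})/\lceil s/\mu\rceil\leq C_6\,e^{-\int_{\tau_j}^{\tau_{j+1}}G_j(v)\mathrm{d}v}\check{X}_j(\tau_{j+1})$ together with $\mathbb{P}(S_j>0|\mathcal{F}^N_{\tau_j})\leq Ce^b/q_j$ to confine that event's contribution to $O(e^b/k_N^2)=o(1/k_N)$. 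Your argument needs this split, or an equivalent fine bound on $\{S_j>0\}$, to close; as written, the step ``the ratio reduces to $c_j+O(\epsilon/k_N)$ once the seed is discarded'' is false precisely on the events that drive the Bolthausen--Sznitman dynamics.
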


\begin{proof}

	We address the first claim.
	Using the martingale $\check{Z}_j^X$ from Lemma \ref{variance slow martingales}, we can then combine Proposition \ref{proposition tools} point 3 and Lemma \ref{control Y} and see that
	\begin{align*}
		&\mathds{1}_{\{\zeta>\tau_{j+1}\}}e^{-\int_{\tau_j}^{\tau_{j+1}}G_j(v)\mathrm{d}v}\check{X}_j(\tau_{j+1})\\
		&\hspace{1cm}= \mathds{1}_{\{\zeta>\tau_{j+1}\}}\left(e^{-\int_{\tau_j}^{\tau_j'}G_j(v)\mathrm{d}v}\check{Z}_j^X(\tau_{j+1})+\int_{\tau_j'}^{\tau_{j+1}} e^{-\int_{\tau_j}^{u} G_j(v)\mathrm{d}v}\mu\frac{\alpha}{q_j}\frac{X_{j-1}(u)Y_{j-1}(u)}{W_{j-1}(u)}\mathrm{d}u\right)\\
		&\hspace{1cm}=\mathds{1}_{\{\zeta>\tau_{j+1}\}}\Bigg(e^{-\int_{\tau_j}^{\tau_j'}G_j(v)\mathrm{d}v}\check{Z}_j^X(\tau_{j+1})+\mu\frac{\alpha}{q_j}\left(\frac{X_{j-1}(\tau_j)Y_{j-1}(\tau_j)}{\lceil s/\mu\rceil}+O\left(\delta^2\frac{s}{\mu}\right)\right)\int_{\tau_j'}^{\tau_{j+1}} e^{-s(u-\tau_j)}\mathrm{d}u\Bigg)\\
		&\hspace{1cm}=\mathds{1}_{\{\zeta>\tau_{j+1}\}}\Bigg(e^{-\int_{\tau_j}^{\tau_j'}G_j(v)\mathrm{d}v}\check{Z}_j^X(\tau_{j+1})+\frac{\alpha}{q_j}\frac{X_{j-1}(\tau_j)Y_{j-1}(\tau_j)}{\lceil s/\mu\rceil^2}\left(e^{-s(\tau_j'-\tau_j)}-e^{-s(\tau_{j+1}-\tau_j)}\right)+O\left(\frac{\delta^2}{q_j}\right)\Bigg).
	\end{align*}
	By definition, $\tau_j'=\tau_j+\frac{3}{sq_j}\log\frac{1}{sq_j}$ hence we have that $e^{-s(\tau_j'-\tau_j)}=(sq_j)^{3/q_j}\to 1$ as $N\to\infty$ since $q_j=O(k_N)$ and Assumption $(A_1)$ gives $\log((sk_N)^{1/k_N})=(\log s + \log k_N)/k_N\to 0$.
	Thus, by \eqref{Equation exp(sa_N/k_N)}, the above becomes
	\begin{align}\label{Equation check X}
		&\mathds{1}_{\{\zeta>\tau_{j+1}\}}e^{-\int_{\tau_j}^{\tau_{j+1}}G_j(v)\mathrm{d}v}\check{X}_j(\tau_{j+1})\nonumber\\
		&\hspace{2cm}=\mathds{1}_{\{\zeta>\tau_{j+1}\}}\Bigg(e^{-\int_{\tau_j}^{\tau_j'}G_j(v)\mathrm{d}v}\check{Z}_j^X(\tau_{j+1})+\frac{\alpha}{q_j}\frac{X_{j-1}(\tau_j)Y_{j-1}(\tau_j)}{\lceil s/\mu\rceil^2}+O\left(\frac{\delta^2}{q_j}\right)\Bigg).
	\end{align}
	We use Cauchy-Schwarz inequality and Lemma \ref{variance slow martingales} to bound the expectation of the first term:
	\begin{align*}
		&\left|\mathbb{E}\left(\mathds{1}_{\{\zeta>\tau_{j+1}\}}e^{-\int_{\tau_j}^{\tau_j'}G_j(v)\mathrm{d}v}\check{Z}_j^X(\tau_{j+1})\Big|\mathcal{F}^N_{\tau_j}\right)\right|\\
		&\hspace{3cm}\leq\mathbb{E}\left(\mathds{1}_{\{\zeta>\tau_j'\}}e^{-2\int_{\tau_j}^{\tau_j'}G_j(v)\mathrm{d}v}\Big|\mathcal{F}^N_{\tau_j}\right)^{1/2}\mathbb{E}\Bigg(\mathds{1}_{\{\zeta>\tau_j'\}}\frac{e^{\int_{\tau_j}^{\tau_j'}G_j(v)\mathrm{d}v}}{sq_j^3}\Big|\mathcal{F}^N_{\tau_j}\Bigg)^{1/2}.
	\end{align*}
	By Proposition \ref{proposition tools} point 4, we have
	\begin{align*}
		\mathbb{E}\left(\mathds{1}_{\{\zeta>\tau_j'\}}e^{-2\int_{\tau_j}^{\tau_j'}G_j(v)\mathrm{d}v}\Big|\mathcal{F}^N_{\tau_j}\right)
		\leq e^{-2s(q_j-C_3)(\tau_j'-\tau_j)},
	\end{align*}
	The right-hand side above reads as $(sk_n)^6(sk_N)^{-2C_3/q_j}\leq C(sk_N)^6$ since $q_j=O(k_N)$ and by Assumption $(A_1)$, $k_N/\log(1/s)\to \infty$ as $N\to\infty$ so that $1\leq (sk_N)^{-2C_3/k_N}\leq s^{-2C_3/k_N}\to 1$ ($sk_N\to 0$ by Assumption $(A_3)$).
	Similarly, one has $e^{\int_{\tau_j}^{\tau_j'}G_j(v)\mathrm{d}v}\leq C(sq_j)^{-3}$ and we see that
	\begin{align*}
		&\left|\mathbb{E}\left(\mathds{1}_{\{\zeta>\tau_{j+1}\}}e^{-\int_{\tau_j}^{\tau_j'}G_j(v)\mathrm{d}v}\check{Z}_j^X(\tau_{j+1})\Big|\mathcal{F}^N_{\tau_j}\right)\right|
		\leq Cs
		= o\left(\frac{1}{k_N}\right),
	\end{align*}
	by Assumption $(A_3)$. Therefore taking the expectation in \eqref{Equation check X} yields
	\begin{align*}
		&\mathbb{E}\left(\mathds{1}_{\{\zeta>\tau_{j+1}\}}e^{-\int_{\tau_j}^{\tau_{j+1}}G_j(v)\mathrm{d}v}\check{X}_j(\tau_{j+1})\Big|\mathcal{F}^N_{\tau_j},\{\zeta>\tau_j\}\right)\\
		&\hspace{5cm}=\frac{\alpha}{q_j}\frac{X_{j-1}(\tau_j)Y_{j-1}(\tau_j)}{\lceil s/\mu\rceil^2}\mathbb{P}\left(\zeta>\tau_{j+1}\Big|\mathcal{F}^N_{\tau_j},\{\zeta>\tau_j\}\right)+O\left(\frac{\delta^2}{q_j}\right).
	\end{align*}
	Since $\mathbb{P}(\tau_{j+1}>\zeta)\geq\mathbb{P}(\zeta>a_NT)\geq 1-\epsilon$, the proof of the first statement of the lemma is complete.

	The second claim can be proven in a similar way by multiplying \eqref{Equation check X} by $\mathds{1}_{\{S_j=0\}}$ where $S_j$ denotes the number of early type $j$ individuals alive at time $\tau_{j+1}$. 
	Then, by Proposition \ref{proposition tools} point 2, we have on the event $\{\zeta>\tau_{j+1},S_j=0\}$ that
	\begin{align*}
		&\frac{1}{1+4\delta}e^{-\int_{\tau_j}^{\tau_{j+1}}G_j(v)\mathrm{d}v}\check{X}_j(\tau_{j+1})
		\leq\frac{\check{X}_j(\tau_{j+1})}{\lceil s/\mu\rceil}
		\leq \frac{1}{1-4\delta}e^{-\int_{\tau_j}^{\tau_{j+1}}G_j(v)\mathrm{d}v}\check{X}_j(\tau_{j+1}).
	\end{align*}
	By Lemma 7.8 in \cite{S152}, we know that $\mathbb{P}(S_j>0|\mathcal{F}^N_{\tau_j})\leq Ce^{b}/q_j$. Combined with Proposition \ref{proposition tools} point 5, we see that
	\begin{align*}
		\mathbb{E}\left(\mathds{1}_{\{\zeta>\tau_{j+1},S_j>0\}}\frac{\check{X}_j(\tau_{j+1})}{\lceil s/\mu\rceil}\Big|\mathcal{F}^N_{\tau_j}\right)
		&\leq\mathbb{E}\left(\mathds{1}_{\{\zeta>\tau_{j+1},S_j>0\}}C_6e^{\int_{\tau_j}^{\tau_{j+1}}G_j(v)\mathrm{d}v}\check{X}_j(\tau_{j+1})\Big|\mathcal{F}^N_{\tau_j}\right)\\
		&=CC_6e^b O\left(\frac{1}{k_N^2}\right),
	\end{align*}
	where the last estimate follows from multiplying \eqref{Equation check X} by $\mathds{1}_{\{S_j>0\}}$ and taking the expectation.
	
	We now show the last identity of the lemma.
	We simply note that elevating to the square then taking the expectation in \eqref{Equation check X} and using the same approximations as throughout the proof implies that
	\begin{align*}
		&\mathbb{E}\left(\mathds{1}_{\{\zeta>\tau_{j+1}\}}e^{-2\int_{\tau_j}^{\tau_{j+1}}G_j(v)\mathrm{d}v}\check{X}_j(\tau_{j+1})^2|\mathcal{F}^N_{\tau_j}\right)\\
		&\hspace{1.5cm}\leq\left(C(sq_j)^6\mathrm{Var}(\mathds{1}_{\{\zeta>\tau_{j+1}\}}\check{Z}_j(\tau_{j+1}|\mathcal{F}^N_{\tau_j}))+O\left(\frac{\alpha^2}{q_j^2}\right)\right)
		=O\left(s^2+\frac{1}{q_j^2}\right)
		=O\left(\frac{1}{k_N^2}\right),
	\end{align*}
	where we recall that $q_j\geq (1-2\delta k_N)$ by Proposition \ref{proposition tools} point 4.
	We then use Proposition \ref{proposition tools} point 5 to conclude the proof.
\end{proof}
	
	\section{Convergence towards the SDE.}
	\label{Section thm SDE}
	In this section, our strategy to show the convergence is very common when showing convergence of a Markov process to the solution of a SDE: we first establish tightness, then look at the infinitesimal generator, and show that any weak limit solves a martingale problem associated to the SDE.
	The next lemma addresses the tightness of $(\mathcal{Y}^N_t)_{t\in\intervalleff{2}{T-1}}$.
		
	\begin{lem}\label{Lemma tightness}
	Recall that $\mathcal{Y}^N_2=y_N$ with $y_N\to y\in\intervalleoo{0}{1}$.
	The sequence $\{(\mathcal{Y}^N(t))_{t\in\intervalleff{2}{T-1}};N\in\mathbb{N}\}$ is tight.
\end{lem}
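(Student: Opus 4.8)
The plan is to verify Aldous' tightness criterion. Since $\mathcal{Y}^N$ takes values in the compact set $\intervalleff{0}{1}$, compact containment is automatic, and it suffices to prove that for every $\eta>0$,
\begin{align*}
	\lim_{\theta\to 0}\limsup_{N\to\infty}\sup_{\sigma\leq\sigma'\leq\sigma+\theta}\mathbb{P}\left(\left|\mathcal{Y}^N_{\sigma'}-\mathcal{Y}^N_\sigma\right|>\eta\right)=0,
\end{align*}
where the supremum is over pairs of stopping times bounded by $T-1$. I would work on the event $\{\zeta>a_N(T-1)\}$, which has probability at least $1-\epsilon$, and absorb its complement as an additive error $\epsilon$. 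The point exploited at the very end is that the law of $\mathcal{Y}^N$ does not depend on the parameter $\epsilon$: the estimates below hold for every fixed $\epsilon$, so after the double limit I may let $\epsilon\to 0$.

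First I would pass from the time-indexed increment to a step-indexed one. On $\{\zeta>a_N(T-1)\}$ one has $\mathcal{Y}^N_t=\mathsf{Y}^N_{j(t)-1}$ for $t\in\intervalleff{2}{T-1}$ (the stopping at $j(\zeta)$ being inactive there), so that $\mathcal{Y}^N_{\sigma'}-\mathcal{Y}^N_\sigma$ is a telescoping sum of the increments $\mathsf{Y}^N_j-\mathsf{Y}^N_{j-1}$ over the steps $j$ with $j(\sigma)\leq j<j(\sigma')$. Lemma \ref{Lemma bounds Delta with q} and Proposition \ref{proposition tools} point 4 show that each such step occupies a rescaled time $(\tau_{j+1}-\tau_j)/a_N$ bounded below by $(1-2\delta)/q_j\geq (1-2\delta)/((e+2\delta)k_N)$; hence a window of rescaled length at most $\theta$ contains at most $O(k_N\theta)$ steps. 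This is the bookkeeping that turns the per-$j$ estimates of the toolbox into per-time estimates.

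Next I would decompose a single step. Using $Y_j=\check{Y}_j-\check{X}_j$, I write
\begin{align*}
	\mathsf{Y}^N_j-\mathsf{Y}^N_{j-1}=\left(\check{\mathsf{Y}}^N_j-\mathsf{Y}^N_{j-1}\right)-\frac{\check{X}_j(\tau_{j+1})}{\lceil s/\mu\rceil},
\end{align*}
and split according to whether $S_j\leq\epsilon$ or $S_j>\epsilon$. Restricted to $\{S_j\leq\epsilon\}$, the compensated increments form a martingale: by Lemma \ref{Lemma moments step proportion} their conditional mean is $o(1/k_N)$ and their conditional second moment is $O(\epsilon/k_N)$, while by Lemma \ref{Lemma expectation weak selection effect} the weak-selection term $\check{X}_j(\tau_{j+1})/\lceil s/\mu\rceil$ contributes a conditional mean of order $\alpha/q_j=O(1/k_N)$ and a conditional second moment $o(1/k_N)$. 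Summing the conditional variances over the $O(k_N\theta)$ steps of the window gives total variance $O(\epsilon\theta)$, so Doob's maximal inequality bounds the probability that the martingale part of the increment exceeds $\eta/2$ by $O(\epsilon\theta/\eta^2)$; the accumulated conditional means contribute in absolute value at most $O(k_N\theta)\cdot O(1/k_N)=O(\theta)$, which is $<\eta/2$ once $\theta$ is small.

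Finally I would handle the rare large steps and conclude. By Lemma \ref{Lemma convergence proportion family}, more precisely \eqref{Equation approx p_S with nu}, one has $\mathbb{P}(S_j>\epsilon\mid\mathcal{F}^N_{\tau_j})\leq C/(\epsilon q_j)=O(1/(\epsilon k_N))$, so the expected number of steps with $S_j>\epsilon$ in the window is $O(\theta/\epsilon)$ and the probability that at least one occurs is $O(\theta/\epsilon)$; by Lemma \ref{Lemma Y after early mutation} a step with $S_j\leq\epsilon$ moves $\mathsf{Y}^N$ by at most $S_j+O(\delta)$, so the only way an increment can exceed $\eta$, beyond the diffusive contribution already controlled, is through such a large step. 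Collecting the martingale part, the drift, the large-step probability, and the error $\epsilon$ coming from $\{\zeta\leq a_N(T-1)\}$, the probability in the criterion is at most $O(\epsilon\theta/\eta^2)+O(\theta)+O(\theta/\epsilon)+\epsilon$; taking $\limsup_N$ and then $\theta\to 0$ leaves $O(\epsilon)$, and letting $\epsilon\to 0$ finishes the argument. The main obstacle I anticipate is precisely this coexistence of a diffusive accumulation of $O(k_N\theta)$ infinitesimal steps with order-one jumps from early mutations: one must split at the threshold $S_j=\epsilon$, keep every constant uniform in $N$ as $k_N\to\infty$, and transport the $j$-indexed estimates through the random time change $t\mapsto j(t)$ together with the stopping at $j(\zeta)$; the tension between the jump term $O(\theta/\epsilon)$ and the variance term $O(\epsilon\theta/\eta^2)$ is resolved by fixing $\epsilon$, sending $\theta\to 0$ first, and only then letting $\epsilon\to 0$.
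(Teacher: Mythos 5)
Your proposal is correct and follows essentially the same route as the paper: Aldous' criterion, the split on $\{\zeta>a_NT\}$ costing $\epsilon$, the decomposition $\mathsf{Y}^N_j-\mathsf{Y}^N_{j-1}=(\check{\mathsf{Y}}^N_j-\mathsf{Y}^N_{j-1})-\check{X}_j(\tau_{j+1})/\lceil s/\mu\rceil$ over the $O(k_N\theta)$ steps of the window, the moment bounds of Lemmas \ref{Lemma moments step proportion} and \ref{Lemma expectation weak selection effect} together with $\mathbb{P}(S_j>\epsilon\mid\mathcal{F}^N_{\tau_j})=O(1/(\epsilon k_N))$ yielding the same error budget $O(\epsilon\theta/\eta^2)+O(\theta)+O(\theta/\epsilon)+\epsilon$, and the concluding order of limits ($\theta\to 0$, then $\epsilon\to 0$, legitimate since the law of $\mathcal{Y}^N$ does not depend on $\epsilon$). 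The only cosmetic difference is that you compensate the step increments into a martingale and invoke Doob's maximal inequality, whereas the paper applies Chebyshev directly to the second moment of the telescoped increment and kills the cross terms with the $o(1/k_N)$ conditional-mean bound --- the same two estimates doing the same work.
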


\begin{proof}
	The proof uses Aldous' criterion for tightness, stated e.g. in \cite{JS03} Chapter VI Theorem 4.5.
	Let $\lambda,\theta>0$ and let $\sigma,\sigma'$ denote any two stopping times with respect to the filtration $\mathcal{F}^N$, that are bounded by $T-1$, and such that $\sigma\leq\sigma'\leq\sigma+\theta$.
	Splitting the following probability on the events $\{\zeta>a_NT\}$ and its complement entails that
	\begin{align}\label{Equation bound for tightness}
		\mathbb{P}\left(|\mathcal{Y}^N_{\sigma'}-\mathcal{Y}^N_{\sigma}|>\lambda\right)
		&\leq\mathbb{P}\left(\mathds{1}_{\{\zeta>a_NT\}}|\mathcal{Y}^N_{\sigma'}-\mathcal{Y}^N_{\sigma}|>\lambda\right)+\epsilon\nonumber \\
		&\leq\lambda^{-2}\mathbb{E}\left(\mathbb{E}\left(\mathds{1}_{\{\zeta>a_NT\}}\left(\mathcal{Y}^N_{\sigma'}-\mathcal{Y}^N_{\sigma}\right)^2\Big|\mathcal{F}^N_{\tau_{j(\sigma)}}\right)\right)+\epsilon.
	\end{align}
	We rewrite the conditional expectation as
	\begin{align}\label{Equation split Aldous}
		&\mathbb{E}\Bigg(\mathds{1}_{\{\zeta>a_NT\}}\Bigg(\sum_{j=j(\sigma)}^{j(\sigma')-1}\mathsf{Y}^N_{j}-\mathsf{Y}^N_{j-1}\Bigg)^2\Big|\mathcal{F}^N_{\tau_{j(\sigma)}}\Bigg)\nonumber\\
		&\hspace{1cm}\leq 2\mathbb{E}\Bigg(\mathds{1}_{\{\zeta>a_NT\}}\Bigg(\Bigg(\sum_{j=j(\sigma)}^{j(\sigma')-1}\check{\mathsf{Y}}^N_j-\mathsf{Y}^N_{j-1}\Bigg)^2+\Bigg(\sum_{j=j(\sigma)}^{j(\sigma')-1}\frac{\check{X}_j(\tau_{j+1})}{\lceil s/\mu\rceil}\Bigg)^2\Bigg)\Big|\mathcal{F}^N_{\tau_{j(\sigma)}}\Bigg),
	\end{align}
	where we used that $(x+y)^2\leq 2x^2+2y^2$.
	Recall that on the event $\{\zeta>a_NT\}$, the number of $j\geq 1$ such that $\sigma\leq\tau_{j}\leq\sigma'$ is at most $\lceil 3k_N\theta\rceil$ by Proposition \ref{proposition tools} point 1.
	We bound the first sum above by
	\begin{align*}
		&\mathbb{E}\Bigg(\Bigg(\sum_{j=j(\sigma)}^{j(\sigma')-1}\check{\mathsf{Y}}^N_j-\mathsf{Y}^N_{j-1}\Bigg)^2\Big|\mathcal{F}^N_{\tau_{j(\sigma)}}\Bigg)\\
		&\hspace{1.5cm}\leq 2\mathbb{E}\Bigg(\sum_{j=j(\sigma)}^{j(\sigma)+\lceil 3k_N\theta\rceil}\left(\check{\mathsf{Y}}^N_{j}-\mathsf{Y}^N_{j-1}\right)^2
		+2\sum_{\substack{j,\ell=j(\sigma)\\j<\ell}}^{j(\sigma')-1}\left(\check{\mathsf{Y}}^N_{j}-\mathsf{Y}^N_{j-1}\right)\left(\check{\mathsf{Y}}^N_{\ell}-\mathsf{Y}^N_{\ell-1}\right)\Big|\mathcal{F}^N_{\tau_{j(\sigma)}}\Bigg).
	\end{align*}	 
	We now bound the first sum of the right-hand side.
	Let $S_j$ be the number of early type $j$ individuals at time $\tau_{j+1}$.
	Using our Lemma \ref{Lemma moments step proportion} and Lemma 7.8 in \cite{S152}, we have the following bound
	\begin{align*}
		&\sum_{j=j(\sigma)}^{j(\sigma)+\lceil 3k_N\theta\rceil}\mathbb{E}\left(\mathbb{E}\left((\check{\mathsf{Y}}_j^N-\mathsf{Y}_{j-1}^N)^2\Big|\mathcal{F}^N_{\tau_j}\right)\Big|\mathcal{F}^N_{\tau_{j(\sigma)}}\right)\\
		&\hspace{2cm}\leq \sum_{j=j(\sigma)}^{j(\sigma)+\lceil 3k_N\theta\rceil}\left(\frac{C\epsilon}{k_N}+\mathbb{E}\left(\mathbb{P}\left(S_j>\epsilon,\zeta>\tau_{j+1}|\mathcal{F}^N_{\tau_j}\right)\Big|\mathcal{F}^N_{\tau_{j(\sigma)}}\right)\right)
		\leq C\epsilon\theta+\frac{\theta}{\epsilon}
		\leq C\frac{\theta}{\epsilon}.
	\end{align*}
	We next turn our attention to the double sum, we write
	\begin{align*}
		\sum_{\substack{j,\ell=j(\sigma)\\j<\ell}}^{j(\sigma')-1}\left(\check{\mathsf{Y}}^N_{j}-\mathsf{Y}^N_{j-1}\right)\left(\check{\mathsf{Y}}^N_{\ell}-\mathsf{Y}^N_{\ell-1}\right)
		\leq\sup_{n\leq \lceil 3k_N\theta\rceil}\sum_{\substack{j,\ell=j(\sigma)\\j<\ell}}^{j(\sigma)+n}\left(\check{\mathsf{Y}}^N_{j}-\mathsf{Y}^N_{j-1}\right)\left(\check{\mathsf{Y}}^N_{\ell}-\mathsf{Y}^N_{\ell-1}\right),
	\end{align*}
	therefore we have that
	\begingroup
	\allowdisplaybreaks
	\begin{align*}
		&\Bigg|\mathbb{E}\Bigg(\sum_{\substack{j,\ell=j(\sigma)\\j<\ell}}^{j(\sigma')-1}\left(\check{\mathsf{Y}}^N_{j}-\mathsf{Y}^N_{j-1}\right)\left(\check{\mathsf{Y}}^N_{\ell}-\mathsf{Y}^N_{\ell-1}\right)\Big|\mathcal{F}^N_{\tau_{j(\sigma)}}\Bigg)\Bigg|\\
		&\hspace{2cm}\leq\sup_{n\leq\lceil 3k_N\theta\rceil}\Bigg|\sum_{\substack{j,\ell=j(\sigma)\\j<\ell}}^{j(\sigma)+n}\mathbb{E}\Bigg(\left(\check{\mathsf{Y}}^N_{j}-\mathsf{Y}^N_{j-1}\right)\mathbb{E}\left(\check{\mathsf{Y}}^N_{\ell}-\mathsf{Y}^N_{\ell-1}\Big|\mathcal{F}^N_{\tau_\ell}\right)\Big|\mathcal{F}^N_{\tau_{j(\sigma)}}\Bigg)\Bigg|\\
		&\hspace{2cm}=\sum_{\substack{j,\ell=j(\sigma)\\j<\ell}}^{j(\sigma)+\lceil 3k_N\theta\rceil}\mathbb{E}\left(\left|\check{\mathsf{Y}}^N_{j}-\mathsf{Y}^N_{j-1}\right|\Big|\mathcal{F}^N_{\tau_{j(\sigma)}}\right)\times o\left(\frac{1}{k_N}\right)
		=o(1),
	\end{align*}
	\endgroup
	by Lemma \ref{Lemma moments step proportion}.

	We now bound the second sum in \eqref{Equation split Aldous}.
	Proceeding similarly as before, we obtain
	\begin{align*}
		&\mathbb{E}\Bigg(\mathds{1}_{\{\zeta>a_NT\}}\Bigg(\sum_{j=j(\sigma)}^{j(\sigma')-1}\frac{\check{X}_j(\tau_{j+1})}{\lceil s/\mu\rceil}\Bigg)^2\Big|\mathcal{F}^N_{\tau_{j(\sigma)}}\Bigg)\\
		&\hspace{0.3cm}\leq\mathbb{E}\Bigg(\mathds{1}_{\{\zeta>a_NT\}}\sum_{j=j(\sigma)}^{j(\sigma)+\lceil 3k_N\theta\rceil}\Bigg(\frac{\check{X}_j(\tau_{j+1})}{\lceil s/\mu\rceil}\Bigg)^2 + \sum_{\substack{j,\ell=j(\sigma) \\ j<\ell}}^{j(\sigma)+\lceil 3k_N\theta\rceil}\frac{\check{X}_j(\tau_{j+1})}{\lceil s/\mu\rceil}\mathbb{E}\left(\mathds{1}_{\{\zeta>a_NT\}}\frac{\check{X}_\ell(\tau_{\ell+1})}{\lceil s/\mu\rceil}\Big|\mathcal{F}^N_{\tau_\ell}\right)\Big|\mathcal{F}^N_{\tau_{j(\sigma)}}\Bigg)\\&\hspace{0.3cm}\leq \lceil 3k_N\theta\rceil o\left(\frac{1}{k_N}\right) + \lceil 3\theta k_N\rceil^2\times\frac{C}{k_N^2}
		\leq \theta C,
	\end{align*}
	by Lemma \ref{Lemma expectation weak selection effect}.
	Hence, coming back to \eqref{Equation bound for tightness}, we have shown that for all $\lambda>0$, it holds that
	\begin{align*}
		&\lim_{\theta\to 0}\limsup_{N\to\infty}\sup_{\sigma,\sigma'}\mathbb{P}\left(|\mathcal{Y}^N_{\sigma'}-\mathcal{Y}^N_\sigma|>\lambda\right)\\
		&\hspace{1.5cm}\leq \lambda^{-2}\lim_{\theta\to 0}\left(C\theta+C\frac{\theta}{\epsilon}+\epsilon\right)=\lambda^{-2}\epsilon.
	\end{align*}
	Since the left-hand side does not depend on $\epsilon$, its value is simply $0$, which shows that the sequence is tight by Aldous' criterion for tightness.
\end{proof}
	
	Define
	\begin{align*}
		\Delta_j:=\frac{\tau_{j+1}-\tau_j}{a_N}.
	\end{align*}
	Even though $\{\mathsf{Y}^N_j;j\in I,j\geq j(2)-1\}$ is not Markovian, we shall mimic a classical method for showing convergence of a Markov process through its infinitesimal generator.

	\begin{lem}\label{Lemma bound step}
		On the event $\{\zeta>\tau_{j+1}\}$, for all $f\in\mathcal{C}^\infty(\intervalleff{0}{1})$, all $j\in I$, $j\geq j(2)$ and $N$ large enough, it holds that
		\begin{align*}
			&\bigg|\frac{1}{\Delta_j}\mathbb{E}\left(f(\mathsf{Y}^N_j)-f(\mathsf{Y}^N_{j-1})\Big|\mathcal{F}^N_{\tau_j}\right)+\alpha\mathsf{Y}^N_{j-1}(1-\mathsf{Y}^N_{j-1})f'(\mathsf{Y}^N_{j-1})\\
			&\hspace{2cm}-\int_0^1 \frac{\mathrm{d}x}{x^2}\int_0^1\mathrm{d}u\Big(f\left(\mathsf{Y}^N_{j-1}(1-x)+x\mathds{1}_{\left\{u\leq\mathsf{Y}^N_{j-1}\right\}}\right)-f\left(\mathsf{Y}^N_{j-1}\right)\Big)\bigg|\\
			&\hspace{8cm}\leq C\epsilon(||f||_\infty +||f'||_\infty +||f''||_\infty).
		\end{align*}
	\end{lem}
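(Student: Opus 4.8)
The plan is to compare the one-step increment of $f(\mathsf{Y}^N_\cdot)$ with the generator of \eqref{Bolthausen-Sznitman SDE} by exploiting the decomposition $\mathsf{Y}^N_j = \check{\mathsf{Y}}^N_j - \check{X}_j(\tau_{j+1})/\lceil s/\mu\rceil$, which isolates the coalescent part $\check{\mathsf{Y}}^N_j$ from the weak-selection part $\check{X}_j$. I abbreviate $y:=\mathsf{Y}^N_{j-1}$ and use that, since $W_{j-1}(\tau_j)=\lceil s/\mu\rceil$, one has $X_{j-1}(\tau_j)/\lceil s/\mu\rceil = 1-y$, so $X_{j-1}(\tau_j)Y_{j-1}(\tau_j)/\lceil s/\mu\rceil^2 = y(1-y)$. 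The whole computation rests on the scaling $1/\Delta_j = q_j(1+O(\delta))$ from Lemma \ref{Lemma bounds Delta with q} together with $q_j=\Theta(k_N)$ from Proposition \ref{proposition tools} point 4; hence any conditional expectation that is $O(\epsilon/k_N)$ contributes $O(\epsilon)$ after multiplication by $1/\Delta_j$, and since $\delta<\epsilon^3$ by \eqref{delta et epsilon} every multiplicative $O(\delta)$ error is also absorbed into the target bound.

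Writing $f(\mathsf{Y}^N_j) - f(\mathsf{Y}^N_{j-1}) = [f(\check{\mathsf{Y}}^N_j)-f(\mathsf{Y}^N_{j-1})] + [f(\mathsf{Y}^N_j)-f(\check{\mathsf{Y}}^N_j)]$ and Taylor-expanding the second bracket to first order around $\check{\mathsf{Y}}^N_j$, the drift contribution reads $-f'(\check{\mathsf{Y}}^N_j)\check{X}_j(\tau_{j+1})/\lceil s/\mu\rceil$ plus a quadratic remainder. I would first replace $f'(\check{\mathsf{Y}}^N_j)$ by $f'(y)$, the difference being bounded by $\|f''\|_\infty$ times a Cauchy--Schwarz product of the (small) second moments of $\check{\mathsf{Y}}^N_j-y$ and of $\check{X}_j/\lceil s/\mu\rceil$ supplied by Lemmas \ref{Lemma moments step proportion} and \ref{Lemma expectation weak selection effect}. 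The second and third identities of Lemma \ref{Lemma expectation weak selection effect} then give $-f'(y)\frac{\alpha}{q_j}y(1-y)+O(\epsilon/k_N)\|f'\|_\infty$ for the linear term and $o(1/k_N)\|f''\|_\infty$ for the remainder; multiplying by $1/\Delta_j$ yields exactly $-\alpha y(1-y)f'(y)$ up to $O(\epsilon)(\|f'\|_\infty+\|f''\|_\infty)$, which matches the term $+\alpha\mathsf{Y}^N_{j-1}(1-\mathsf{Y}^N_{j-1})f'(\mathsf{Y}^N_{j-1})$ in the statement.

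It remains to handle the coalescent part $\mathbb{E}(f(\check{\mathsf{Y}}^N_j)-f(y)\mid\mathcal{F}^N_{\tau_j})$, which I split according to whether $S_j\leq\epsilon$ or $S_j>\epsilon$. On $\{S_j\leq\epsilon\}$ a second-order Taylor expansion together with the first-moment bound $o(1/k_N)$ and second-moment bound $\leq C\epsilon/k_N$ of Lemma \ref{Lemma moments step proportion} gives, after rescaling, a contribution of order $o(1)\|f'\|_\infty+O(\epsilon)\|f''\|_\infty$. On $\{S_j>\epsilon\}$ I invoke Lemma \ref{Lemma Y after early mutation}: with probability $1-C\epsilon/k_N$ the early family has a single founder at time $\tau_j$, uniform among the type $j-1$ individuals — hence in group $Y$ with probability $y$, independently of $S_j$, because the cancelled killings render the group labels passive on $[\tau_j,\tau_{j+1}]$ — and then $\check{\mathsf{Y}}^N_j = y(1-S_j) + S_j\mathds{1}_{\{\text{founder}\in Y\}}+O(\delta)$. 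Conditionally on $S_j=p$ this yields $\mathbb{E}(f(\check{\mathsf{Y}}^N_j)-f(y)) = g(p)+O(\delta)\|f'\|_\infty$ with $g(p):=\int_0^1(f(y(1-p)+p\mathds{1}_{\{u\leq y\}})-f(y))\mathrm{d}u$, a smooth function satisfying $\|g\|_\infty\leq 2\|f\|_\infty$, $\|g'\|_\infty\leq\tfrac12\|f'\|_\infty$ and, because the first-order-in-$u$ terms cancel, $|g(p)|\leq Cp^2\|f''\|_\infty$. Applying Lemma \ref{Lemma convergence proportion family} to $g$ gives $q_j\mathbb{E}(\mathds{1}_{\{S_j>\epsilon\}}g(S_j)\mid\mathcal{F}^N_{\tau_j}) = \int_\epsilon^1 g(x)\tfrac{\mathrm{d}x}{x^2}+O(\epsilon)(\|f\|_\infty+\|f'\|_\infty)$, while the missing piece $\int_0^\epsilon g(x)\tfrac{\mathrm{d}x}{x^2}$ is $O(\epsilon)\|f''\|_\infty$ by the quadratic bound; rescaling by $1/\Delta_j=q_j(1+O(\delta))$ reconstructs the full jump term $\int_0^1\tfrac{\mathrm{d}x}{x^2}\int_0^1(f(y(1-x)+x\mathds{1}_{\{u\leq y\}})-f(y))\mathrm{d}u$. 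Collecting the drift and coalescent contributions gives the claim. The main obstacle is this last step: verifying that a surviving early family of relative size $S_j=p$ produces precisely the jump law $y\mapsto y(1-p)+p\,\mathrm{Bernoulli}(y)$ matching the SDE, justifying the independence of the founder's group from $p$, and book-keeping the three heterogeneous error sources — the $8\delta$ of Lemma \ref{Lemma Y after early mutation}, the $C\epsilon/k_N$ failure probability, and the $O(\epsilon)$ of Lemma \ref{Lemma convergence proportion family} — so that after multiplication by $1/\Delta_j\approx q_j\approx k_N$ they all collapse into the single $O(\epsilon)$ bound, which crucially uses $\delta<\epsilon^3$.
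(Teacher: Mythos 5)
Your proposal is correct and follows essentially the same route as the paper's proof: the same decomposition $\mathsf{Y}^N_j=\check{\mathsf{Y}}^N_j-\check{X}_j(\tau_{j+1})/\lceil s/\mu\rceil$ with a first-order Taylor expansion handled by Lemma \ref{Lemma expectation weak selection effect} (using $X_{j-1}(\tau_j)Y_{j-1}(\tau_j)/\lceil s/\mu\rceil^2=\mathsf{Y}^N_{j-1}(1-\mathsf{Y}^N_{j-1})$), the same split on $\{S_j\leq\epsilon\}$ versus $\{S_j>\epsilon\}$ via Lemmas \ref{Lemma moments step proportion}, \ref{Lemma Y after early mutation} and \ref{Lemma convergence proportion family}, and the same final substitution $1/\Delta_j\approx q_j$ from Lemma \ref{Lemma bounds Delta with q}. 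Your cancellation argument giving $|g(p)|\leq Cp^2\|f''\|_\infty$ for the missing piece $\int_0^\epsilon g(x)\,\mathrm{d}x/x^2$ correctly supplies the bound the paper leaves to the reader.
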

	
	\begin{proof}
		Let $\check{\mathsf{X}}_j^N:=\check{X}_j(\tau_{j+1})/\lceil s/\mu\rceil$.
		We write
		\begin{align*}
			&\mathbb{E}\left(f(\mathsf{Y}^N_j)-f(\mathsf{Y}^N_{j-1})|\mathcal{F}^N_{\tau_j}\right)
			=\mathbb{E}\left(\left(f\left(\check{\mathsf{Y}}^N_j-\check{X}^N_j\right)-f(\mathsf{Y}^N_{j-1})\right)\Big|\mathcal{F}^N_{\tau_j}\right)\\
			&\hspace{0.2cm}=\mathbb{E}\bigg(\bigg(f\left(\check{\mathsf{Y}}^N_j-\check{\mathsf{X}}^N_j\right)-f\left(\check{\mathsf{Y}}^N_j\right)+f\left(\check{\mathsf{Y}}^N_j\right)-f(\mathsf{Y}^N_{j-1})\bigg)\Big|\mathcal{F}^N_{\tau_j}\bigg)\\
			&\hspace{0.2cm}=\mathbb{E}\bigg(\bigg(-\check{\mathsf{X}}^N_jf'\left(\check{\mathsf{Y}}^N_j\right)+O\left((\check{\mathsf{X}}^N_j)^2\right)||f''||_\infty+f\left(\check{\mathsf{Y}}^N_j\right)-f(\mathsf{Y}^N_{j-1})\bigg)\Big|\mathcal{F}^N_{\tau_j}\bigg),
		\end{align*}
		where we used the Taylor-Lagrange formula.
		The second term is 
		\begin{align*}
			\mathbb{E}\left(O((\check{X}_j)^2)|\mathcal{F}^N_{\tau_j}\right)
			=o\left(\frac{1}{k_N}\right),
		\end{align*}
		by using Lemma \ref{Lemma expectation weak selection effect}.
		We then focus on the first term.
		We have
		\begin{align*}
			\mathbb{E}\left(\check{\mathsf{X}}^N_jf'\left(\check{\mathsf{Y}}^N_j\right)\Big|\mathcal{F}^N_{\tau_j}\right)
			&=\mathbb{E}\left(\check{\mathsf{X}}^N_jf'\left(\mathsf{Y}^N_{j-1}\right)+O(\check{\mathsf{Y}}^N_j-\mathsf{Y}^N_{j-1})\Big|\mathcal{F}^N_{\tau_j}\right)\\
			&=\frac{\alpha}{q_j}\mathsf{Y}^N_{j-1}(1-\mathsf{Y}^N_{j-1})f'(\mathsf{Y}^N_{j-1})+O\left(\frac{\epsilon}{q_j}\right)+o\left(\frac{1}{k_N}\right),
		\end{align*}
		by Lemmas \ref{Lemma expectation weak selection effect} and \ref{Lemma moments step proportion}.
		We thus have shown that
		\begin{align}\label{Equation expect discrete step 2}
			\mathbb{E}\left(f(\mathsf{Y}^N_j)-f(\mathsf{Y}^N_{j-1})|\mathcal{F}^N_{\tau_j}\right)
			&=-\frac{\alpha}{q_j}\mathsf{Y}_{j-1}(1-\mathsf{Y}_{j-1})f'(\mathsf{Y}^N_{j-1})+O\left(\frac{\epsilon}{q_j}\right)\nonumber\\
			&\hspace{4cm}+\mathbb{E}\bigg(f\left(\check{\mathsf{Y}}^N_j\right)-f(\mathsf{Y}^N_{j-1})\Big|\mathcal{F}^N_{\tau_j}\bigg).
		\end{align}
		We address the last term as follows: let $S_j$ be the number of early type $j$ individuals at time $\tau_{j+1}$, we use Taylor-Lagrange Formula to get the existence of $\xi$ strictly between $\check{\mathsf{Y}}^N_j$ and $\mathsf{Y}^N_{j-1}$ such that
		\begin{align*}
			&\mathds{1}_{\{S_j<\epsilon\}}\left(f\left(\check{\mathsf{Y}}^N_j\right)-f(\mathsf{Y}^N_{j-1})\right)
			=\mathds{1}_{\{S_j<\epsilon\}}\left(\left(\check{\mathsf{Y}}^N_j-\mathsf{Y}^N_{j-1}\right)f'(\mathsf{Y}^N_{j-1})+\left(\check{\mathsf{Y}}^N_j-\mathsf{Y}^N_{j-1}\right)^2f''(\xi)\right).
		\end{align*}
		Therefore, applying Lemma \ref{Lemma moments step proportion}, we see that
		\begin{align*}
			\left|\mathbb{E}\left(\mathds{1}_{\{S_j<\epsilon\}}\left(f\left(\check{\mathsf{Y}}^N_j\right)-f(\mathsf{Y}^N_{j-1})\right)\Big|\mathcal{F}^N_{\tau_j}\right)\right|
			&\leq o\left(\frac{1}{k_N}\right)||f'||_{\infty} + O\left(\frac{\epsilon}{k_N}\right)||f''||_{\infty}
		\end{align*}
		We now turn our attention on the difference when an early mutation generates a large family.
		In the proof of Lemma \ref{Lemma moments step proportion}, we introduced the notion of "good" type $j$ individuals at time $\tau_{j+1}$, that are those whose ancestor at time $\tau_j$ is of type $j-1$.
		We denoted their number by $K_j$.
		Recall that by Proposition \ref{proposition tools} point 1, we know that the ancestors at time $\tau_j$ of these $K_j$ individuals are not of type greater or equal to $j$.
		On the other hand, Lemma 6.3 in \cite{S152} shows that $\mathbb{E}(K_j\mathds{1}_{\{\zeta>\tau_{j+1}\}}|\mathcal{F}^N_{\tau_j})\leq (s/\mu)^{1-1/3k_N}$.
		Markov's Inequality thus yields that
		\begin{align*}
			\mathbb{P}\left(\mathds{1}_{\{\zeta>\tau_{j+1}\}}\frac{K_j}{s/\mu}>\epsilon\rceil|\mathcal{F}^N_{\tau_j}\right)\leq\frac{1}{\epsilon}\left(\frac{\mu}{s}\right)^{1/3k_N}=o\left(\frac{1}{\epsilon k_N}\right),
		\end{align*}
		where the last estimate can be derived by taking the logarithm and using Assumption $(A_2)$.
		
		Let $p_{S_j}$ denote the conditional distribution of $S_j$ given $\mathcal{F}^N_{\tau_j}$, supported on $\{0,1/\lceil s/\mu\rceil,\cdots,1\}$.
		Note that if an early mutation occurs as described in Lemma \ref{Lemma Y after early mutation}, on the event $\{\tau_{j+1}<\zeta\}$ and given $\mathcal{F}^N_{\tau_j}$, the individual who generates the large family, conditionally given that it is a good type $j$ individual, is chosen uniformly at random among the $\lceil s/\mu\rceil$ type $j-1$ individuals at time $\tau_j$.
		Hence, the probability that the early individual is in group $Y$, respectively $X$, (given that there was an early type $j$ mutation) is $\mathsf{Y}^N_{j-1}$, respectively $1-\mathsf{Y}^N_{j-1}$, up to a term of order $o(1/\epsilon k_N)$, as discussed above.
		Thanks to Lemma \ref{Lemma Y after early mutation}, we can write
		\begin{align*}
			&\mathbb{E}\left(\mathds{1}_{\{\zeta>\tau_{j+1},S_j>\epsilon\}}\left(f\left(\check{\mathsf{Y}}^N_j\right)-f(\mathsf{Y}^N_{j-1})\right)\Big|\mathcal{F}^N_{\tau_j}\right)\\
			&\hspace{0.3cm}=\int_{\intervalleof{\epsilon}{1}}p_{S_j}(\mathrm{d}x)\Big(\mathsf{Y}^N_{j-1}f\left(\mathsf{Y}^N_{j-1}(1-x)+x\right)+(1-\mathsf{Y}^N_{j-1})f\left(\mathsf{Y}^N_{j-1}(1-x)\right)-f\left(\mathsf{Y}^N_{j-1}\right)\Big)\\
			&\hspace{12cm+}E+o\left(\frac{1}{\epsilon k_N}\right),
		\end{align*}
		where $E$ is the error coming from the approximation in Lemma \ref{Lemma Y after early mutation}, and the probability that this approximation does not hold.
		In particular, we have that
		\begin{align*}
			|E|&\leq p_{S_j}(\intervalleof{\epsilon}{1})\bigg(\sup_{-8\delta<z<8\delta}\left|f\left(\mathsf{Y}^N_{j-1}(1-x)+x+z\right)-f\left(\mathsf{Y}^N_{j-1}(1-x)+x\right)\right|\\
			&\hspace{3cm}+\sup_{-8\delta<z<8\delta}\left|f\left(\mathsf{Y}^N_{j-1}(1-x)+z\right)-f\left(\mathsf{Y}^N_{j-1}(1-x)\right)\right|\bigg)+\frac{C\epsilon}{k_N}||f||_{\infty}\\
			&\leq \frac{1+13\delta}{\epsilon q_j}16\delta||f'||_{\infty}+\frac{C\epsilon}{k_N}||f||_{\infty}
			\leq C\frac{\epsilon}{q_j}(||f||_\infty+||f'||_\infty),
		\end{align*}
		where we used Lemma 7.8 in \cite{S152}, Proposition \ref{proposition tools} point 4, and that $\epsilon>\delta^3$ by \eqref{delta et epsilon}.
		Lemma \ref{Lemma convergence proportion family} allows us to write
		\begin{align*}
			&\bigg|\mathbb{E}\left(\mathds{1}_{\{\zeta>\tau_{j+1},S_j>\epsilon\}}\left(f\left(\check{\mathsf{Y}}^N_k\right)-f(\mathsf{Y}^N_{j-1})\right)\Big|\mathcal{F}^N_{\tau_j}\right)\\
			&\hspace{1cm}-\frac{1}{q_j}\int_\epsilon^1 \frac{\mathrm{d}x}{x^2}\int_0^1\mathrm{d}u\Big(f\left(\mathsf{Y}^N_{j-1}(1-x)+x\mathds{1}_{\left\{u\leq\mathsf{Y}^N_{j-1}\right\}}\right)-f\left(\mathsf{Y}^N_{j-1}\right)\Big)\bigg|\\
			&\hspace{8cm}\leq C\frac{\epsilon}{q_j}\left(||f'||_\infty+||f||_\infty\right).
		\end{align*}
		We leave to the reader the proof of the following bound:
		\begin{align*}
			\left|\int_0^\epsilon \frac{\mathrm{d}x}{x^2}\int_0^1\mathrm{d}u\Big(f\left(\mathsf{Y}^N_{j-1}(1-x)+x\mathds{1}_{\left\{u\leq\mathsf{Y}^N_{j-1}\right\}}\right)-f\left(\mathsf{Y}^N_{j-1}\right)\Big)\right|
			\leq\epsilon ||f''||_{\infty}
		\end{align*}
		We thus have shown that
		\begin{align*}
			&\bigg|q_j\mathbb{E}\left(f(\mathsf{Y}^N_j)-f(\mathsf{Y}^N_{j-1})\Big|\mathcal{F}^N_{\tau_j}\right)+\alpha\mathsf{Y}^N_{j-1}(1-\mathsf{Y}^N_{j-1})f'(\mathsf{Y}^N_{j-1})\\
			&\hspace{2cm}-\int_0^1 \frac{\mathrm{d}x}{x^2}\int_0^1\mathrm{d}u\Big(f\left(\mathsf{Y}^N_{j-1}(1-x)+x\mathds{1}_{\left\{u\leq\mathsf{Y}^N_{j-1}\right\}}\right)-f\left(\mathsf{Y}^N_{j-1}\right)\Big)\bigg|\\
			&\hspace{9cm}\leq C\epsilon(||f||_\infty+||f'||_\infty+||f''||_\infty).
		\end{align*}
		On the event $\{\zeta>\tau_{j+1}\}$, thanks to Lemma \ref{Lemma bounds Delta with q}, we can replace $q_j$ by $1/\Delta_j$, modifying slightly the constant $C$, which nonetheless would not depend on $\epsilon,\delta$ and $T$.
	\end{proof}
	
	\begin{lem}\label{Lemma martingale problem}
		Any weak limit $(\mathcal{Y}_t)_{t\in\intervalleff{2}{T-1}}$ of $(\mathcal{Y}^N_t)_{t\in\intervalleff{2}{T-1}}$ solves the following martingale problem
		\begin{align*}
			M_t&=f(\mathcal{Y}_t)-f(y)-\int_2^t-\alpha\mathcal{Y}_v(1-\mathcal{Y}_v)f'(\mathcal{Y}_v)\mathrm{d}v\\
			&\hspace{0.7cm}-\int_2^t\mathrm{d}v\int_{\intervalleff{0}{1}^2}\mathrm{d}u\frac{\mathrm{d}p}{p^2}\Big(f\left(\mathcal{Y}_{v}+p\left(\mathds{1}_{\{u\leq\mathcal{Y}_{v}\}}-\mathcal{Y}_{v}\right)\right)-f(\mathcal{Y}_v)-p(\mathds{1}_{\{u\leq\mathcal{Y}_v\}}-\mathcal{Y}_v)f'(\mathcal{Y}_v)\Big)
		\end{align*}
	\end{lem}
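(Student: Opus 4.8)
The plan is to run the classical \emph{generator implies martingale problem} argument, adapted to the present non-Markovian, discretized setting. First I would record that the operator appearing in Lemma \ref{Lemma bound step} is exactly the generator of the martingale problem. Writing the jump of the SDE as $y\mapsto y+p(\mathds{1}_{\{u\le y\}}-y)=y(1-p)+p\mathds{1}_{\{u\le y\}}$, the two integrands differ only by the linear term $p(\mathds{1}_{\{u\le y\}}-y)f'(y)$; integrating it in $u$ over $\intervalleff{0}{1}$ gives $f'(y)\int_0^1(\mathds{1}_{\{u\le y\}}-y)\mathrm{d}u=0$, and it is precisely this subtraction that renders the inner integral in $p$ absolutely convergent near $0$, the integrand being $O(p^2)$ after a Taylor expansion. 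Thus, denoting by $\mathcal{A}f$ the generator and noting $\|\mathcal{A}f\|_\infty\le C(\|f'\|_\infty+\|f''\|_\infty)$ with $y\mapsto\mathcal{A}f(y)$ continuous, it suffices to prove that for every $f\in\mathcal{C}^\infty(\intervalleff{0}{1})$, every $2\le s\le t\le T-1$, and every bounded continuous functional $h$ of the path on $\intervalleff{2}{s}$, one has $\mathbb{E}[(f(\mathcal{Y}_t)-f(\mathcal{Y}_s)-\int_s^t\mathcal{A}f(\mathcal{Y}_v)\mathrm{d}v)\,h]=0$.

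Next I would set up the discrete approximate-martingale decomposition of $\mathcal{Y}^N$. On $\{\zeta>a_NT\}$ the step process $\mathcal{Y}^N$ is constant equal to $\mathsf{Y}^N_{j-1}$ on each rescaled-time interval of length $\Delta_j$, so telescoping gives $f(\mathcal{Y}^N_t)-f(\mathcal{Y}^N_s)=\sum_{j=j(s)}^{j(t)-1}(f(\mathsf{Y}^N_j)-f(\mathsf{Y}^N_{j-1}))$. Splitting each increment into its $\mathcal{F}^N_{\tau_j}$-conditional mean and a martingale difference, the conditional means are handled by Lemma \ref{Lemma bound step} in its $\Delta_j$ form, yielding $\mathbb{E}[f(\mathsf{Y}^N_j)-f(\mathsf{Y}^N_{j-1})\mid\mathcal{F}^N_{\tau_j}]=\Delta_j\mathcal{A}f(\mathsf{Y}^N_{j-1})+R_j$ with $|R_j|\le C\epsilon\,\Delta_j(\|f\|_\infty+\|f'\|_\infty+\|f''\|_\infty)$. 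Since $\sum_j\Delta_j\mathcal{A}f(\mathsf{Y}^N_{j-1})$ is exactly the Riemann sum for $\int_s^t\mathcal{A}f(\mathcal{Y}^N_v)\mathrm{d}v$ up to the two boundary cells, whose contribution is $O(1/k_N)$ by Lemma \ref{Lemma bounds Delta with q}, the drift equals $\int_s^t\mathcal{A}f(\mathcal{Y}^N_v)\mathrm{d}v$ plus an error $\mathcal{E}^N$ with $\mathbb{E}|\mathcal{E}^N|\le C\epsilon(t-s)(\|f\|_\infty+\|f'\|_\infty+\|f''\|_\infty)+o(1)$.

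I would then discard the martingale part against $h$. Because $\mathsf{Y}^N$ is stopped at $j(\zeta)$, the increments for indices beyond $j(\zeta)$ vanish, so the sum $\mathcal{M}^N_{s,t}$ of the martingale differences is a genuine $(\mathcal{F}^N_{\tau_j})_j$-martingale; as $h$ is measurable with respect to $\mathcal{F}^N_{\tau_{j(s)}}\subset\mathcal{F}^N_{\tau_{j-1}}$ for every $j>j(s)$, conditioning gives $\mathbb{E}[\mathcal{M}^N_{s,t}\,h]=0$. Writing $f(\mathcal{Y}^N_t)-f(\mathcal{Y}^N_s)-\int_s^t\mathcal{A}f(\mathcal{Y}^N_v)\mathrm{d}v=\mathcal{M}^N_{s,t}+\mathcal{E}^N$ on $\{\zeta>a_NT\}$ and controlling the complement via $\mathbb{P}(\zeta\le a_NT)\le\epsilon$ together with the boundedness of $f$ and $\mathcal{A}f$, I obtain $|\mathbb{E}[(f(\mathcal{Y}^N_t)-f(\mathcal{Y}^N_s)-\int_s^t\mathcal{A}f(\mathcal{Y}^N_v)\mathrm{d}v)\,h]|\le C\epsilon\|h\|_\infty$ for all $N$ large.

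Finally, I would pass to the weak limit along the subsequence defining $\mathcal{Y}$. The functional $\omega\mapsto(f(\omega_t)-f(\omega_s)-\int_s^t\mathcal{A}f(\omega_v)\mathrm{d}v)h(\omega)$ is bounded and, since $f$ and $\mathcal{A}f$ are bounded continuous and the limit $\mathcal{Y}$ almost surely has no jump at the fixed times $s,t$, continuous at $\mathcal{Y}$-almost every path; weak convergence then transfers the bound to give $|\mathbb{E}[(f(\mathcal{Y}_t)-f(\mathcal{Y}_s)-\int_s^t\mathcal{A}f(\mathcal{Y}_v)\mathrm{d}v)\,h]|\le C\epsilon\|h\|_\infty$. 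As the left-hand side does not depend on $\epsilon$ while $\epsilon\in\intervalleoo{0}{1}$ is arbitrary, it vanishes, which is the desired martingale property; taking $s=2$ and $\mathcal{Y}_2=y$ recovers the stated form of $M_t$. The main obstacle, in my view, is not any single estimate but the bookkeeping of the third paragraph: simultaneously keeping the random summation range $\{j(s),\dots,j(t)-1\}$, the stopping at $j(\zeta)$, and the truncation to $\{\zeta>a_NT\}$ mutually consistent, so that the martingale term is \emph{exactly} orthogonal to $h$ and the accumulated drift error remains $O(\epsilon)$ uniformly in $N$; the almost-sure continuity of the limiting functional needed for the weak-convergence step is a second, more routine, point.
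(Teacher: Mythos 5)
Your proposal is correct in substance but follows a genuinely different route from the paper. You run the classical Stroock--Varadhan/Ethier--Kurtz scheme: fix $f$, times $2\le s\le t\le T-1$ and a bounded continuous functional $h$ of the path up to time $s$, prove the quantitative prelimit bound $\big|\mathbb{E}\big[\big(f(\mathcal{Y}^N_t)-f(\mathcal{Y}^N_s)-\int_s^t\mathcal{A}f(\mathcal{Y}^N_v)\mathrm{d}v\big)h\big]\big|\le C\epsilon\|h\|_\infty$ from Lemma \ref{Lemma bound step} and Lemma \ref{Lemma bounds Delta with q}, pass to the weak limit at \emph{fixed} $\epsilon$ using continuity of the test functional at $\mathcal{Y}$-almost every path, and only then send $\epsilon\to 0$, exploiting that $\mathcal{Y}^N$ and its weak limit do not depend on $\epsilon,\delta$ (only the stopping time $\zeta$ and the error bounds do). The paper instead performs the two limits simultaneously by a diagonal construction: it sets $\epsilon_\ell=1/\ell^2$, $\delta_\ell=O(\epsilon_\ell^3)$, chooses $N_\ell$ along the convergent subsequence so that Lemma \ref{Lemma bound step} holds with error $C\epsilon_\ell$, upgrades the convergence to almost sure via Skorokhod's representation theorem, and exhibits explicit prelimit compensated processes $M^{(\ell)}_t=f(\mathsf{Y}^{(\ell)}_{j(t)-1})-f(\mathsf{Y}^{(\ell)}_{j(2)-1})-\sum_{j}\mathbb{E}\big(f(\mathsf{Y}^{(\ell)}_j)-f(\mathsf{Y}^{(\ell)}_{j-1})\big|\mathcal{F}^{N_\ell}_{\tau_j}\big)$ which converge almost surely to $M_t$, the martingale property being verified directly and transferred to the limit. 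Your sequential order of limits is legitimate for the same structural reason the paper's diagonalization is, and it buys you freedom from the representation theorem and from almost sure convergence of the compensator sums, at the cost of two points you only partially flag: you need $s,t$ to avoid the (at most countable) set of times at which $\mathcal{Y}$ has a fixed jump with positive probability, extending afterwards by right continuity; and the orthogonality $\mathbb{E}[\mathcal{M}^N_{s,t}\,h]=0$ requires care, because the indicator $\mathds{1}_{\{j\le j(t)-1\}}=\mathds{1}_{\{\tau_{j+1}\le a_Nt\}}$ is \emph{not} $\mathcal{F}^N_{\tau_j}$-measurable, so the random summation range cannot be handled term by term naively --- one should view the compensated sum as a martingale in the time-changed filtration $(\mathcal{F}^N_{a_Nt})_{t\ge 0}$, or stop at deterministic indices and absorb the boundary cells, which are $O(1/k_N)$ by Lemma \ref{Lemma bounds Delta with q}. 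Since the paper's own verification of the martingale property of $M^{(\ell)}$ glosses the same bookkeeping, this is not a defect relative to the paper: your approach trades the paper's transparent almost-sure identification of the limiting compensator for a cleaner weak (integral) formulation of the martingale problem.
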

	
	\begin{proof}
		Suppose that $\phi:\mathbb{N}\to\mathbb{N}$ defines a subsequence such that $\mathcal{Y}^{\phi(N)}\to\mathcal{Y}$ in distribution, as $N\to\infty$. 
		We define a sequence of random processes derived from the usual model with varying $\epsilon$ and $\delta$.
		More specifically, for all $\ell\geq 1$ let $\epsilon_\ell:=1/\ell^2$ and $\delta_\ell=O(\epsilon_\ell^3)$ such that \eqref{delta et epsilon} is satisfied (e.g. $1/2\ell^6$).
		Denote $\zeta_{\ell}$ the stopping time associated to the model with parameters $T,\epsilon_\ell,\delta_\ell$ and define $N_\ell$ large enough such that $\mathbb{P}(\zeta_\ell>a_{N_\ell}T)<\epsilon_\ell$ and Lemma \ref{Lemma bound step} holds, and such that $(N_\ell)_{\ell\geq 1}$ is a subsequence of $\phi$.
		Let $(\mathsf{Y}^{(\ell)}_j)_{j(2)\leq j\leq j(T-1)}$ be the process stopped at time $\zeta_\ell$, defined as previously, but with the varying parameters $\epsilon_\ell,\delta_\ell,N_\ell$.
		Since it does not depend on $\epsilon_\ell$ and $\delta_\ell$, the corresponding non-stopped continuous-time process is simply $\mathcal{Y}^{N_\ell}$.
		Note that
		\begin{align*}
			\left(\mathds{1}_{\{\zeta_\ell>a_{N_\ell}T\}}\mathsf{Y}^{(\ell)}_{j(t)-1}\right)_{t\in\intervalleff{2}{T-1}}
			=\left(\mathds{1}_{\{\zeta_\ell>a_{N_\ell}T\}}\mathcal{Y}^{N_\ell}_t\right)_{t\in\intervalleff{2}{T-1}}.
		\end{align*}
		Hence, using that $\epsilon_\ell\to 0$ as $\ell\to\infty$, it is straightforward that $(\mathsf{Y}^{(\ell)}_{j(t)-1})_{t\in\intervalleff{2}{T-1}}\to\mathcal{Y}$ in distribution as $\ell\to\infty$.
		Thanks to Skorokhod's Representation Theorem (see e.g. \cite{B99} Theorem 6.7 p.70), we can assume without loss of generality that the convergence holds almost surely as $\ell\to\infty$.
		
		We define
		\begin{align*}
			M^{(\ell)}_t:=f(\mathsf{Y}^{(\ell)}_{j(t)-1})-f(\mathsf{Y}^{(\ell)}_{j(2)-1})-\sum_{j=j(2)}^{j(t)\wedge j(\zeta_\ell)-1}\mathbb{E}\left(f(\mathsf{Y}^{(\ell)}_j)-f(\mathsf{Y}^{(\ell)}_{j-1})|\mathcal{F}^{N_\ell}_{\tau_j}\right),
		\end{align*}
		Note that $\zeta_\ell\leq  a_{N_\ell}T$ for finitely many $\ell$ almost surely, as a consequence of the choice of $\epsilon_\ell$.
		Therefore, almost surely, for all $t\in\intervalleff{2}{T-1}$, it holds that
		\begin{align*}
			\lim_{\ell\to\infty}M_t^{(\ell)}
			&=\lim_{\ell\to\infty}\left(f(\mathsf{Y}^{(\ell)}_{j(t)-1})-f(\mathsf{Y}^{(\ell)}_{j(2)-1})-\sum_{j=j(2)}^{j(t)-1}\mathbb{E}\left(f(\mathsf{Y}^{(\ell)}_{j})-f(\mathsf{Y}^{(\ell)}_{j-1})|\mathcal{F}^{N_\ell}_{\tau_j}\right)\right)\\
			&=M_t,
		\end{align*}
		thanks to Lemma \ref{Lemma bound step}.
		
		We now show that $(M^{(\ell)}_t)_{t\in\intervalleff{2}{T-1}}$ is a martingale with respect to its natural filtration, which will readily extend to its almost sure limit $(M_t)_{t\in\intervalleff{2}{T-1}}$.
		Let $2\leq t<t+r\leq T-1$ and write
		\begin{align*}
			&\mathbb{E}\left(M^{(\ell)}_{t+r}-M^{(\ell)}_t|(M^{(\ell)}_u)_{u\leq t}\right)\\
			&\hspace{1cm}=\sum_{n=j(t)}^{\infty}\mathbb{P}\left(j(t+r)\wedge j(\zeta)=n|(M^{(\ell)}_u)_{u\leq t}\right)\\
			&\hspace{1.7cm}\times\sum_{j=j(t)}^{n-1}\mathbb{E}\left( f(\mathsf{Y}^{(\ell)}_j)-f(\mathsf{Y}^{(\ell)}_{j-1})-\mathbb{E}\left(f(\mathsf{Y}^{(\ell)}_j)-f(\mathsf{Y}^{(\ell)}_{j-1})|\mathcal{F}^{N_\ell}_{\tau_j}\right)\Big|(M^{(\ell)}_u)_{u\leq t}\right).
		\end{align*}
		All the terms in the last sum are null, since the information given by $(M^{(\ell)}_u)_{u\leq t}$ is contained in $\mathcal{F}^{N_\ell}_{\tau_j}$ for all $j\geq j(t)$.
		We thus deduce that $M$ is a martingale, which entails the claim.
	\end{proof}
	
	\begin{lem}\label{Lemma sol mg pb is sol SDE}
		Any solution of the martingale problem of Lemma \ref{Lemma martingale problem} is a solution of the SDE \eqref{Bolthausen-Sznitman SDE}.
	\end{lem}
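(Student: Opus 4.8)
The plan is to reconstruct the driving Poisson point process $M$ of \eqref{Bolthausen-Sznitman SDE} from the jumps of a solution $\mathcal{Y}$ of the martingale problem of Lemma \ref{Lemma martingale problem}, thereby realising $\mathcal{Y}$ as a weak solution of the SDE; since \eqref{Bolthausen-Sznitman SDE} enjoys pathwise uniqueness by \cite{DL12}, weak existence is all that is needed to pin down the law, which is what Theorem \ref{Theorem convergence SDE} requires. First I would record that the operator in Lemma \ref{Lemma martingale problem} is of L\'evy type: the substitution $z=p(\mathds{1}_{\{u\leq y\}}-y)$, splitting the inner integral over $\{u\leq y\}$ and $\{u>y\}$, rewrites its integral term as $\int(f(y+z)-f(y)-zf'(y))K(y,\mathrm{d}z)$, where $K(y,\cdot)$ is the image of $\frac{\mathrm{d}p}{p^2}\,\mathrm{d}u$ under that map. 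Two features matter: the slice-wise cancellation $\int_0^1(\mathds{1}_{\{u\leq y\}}-y)\,\mathrm{d}u=0$, and the finite second moment $\int z^2K(y,\mathrm{d}z)=y(1-y)\leq 1$, so that the integrand $f(y+z)-f(y)-zf'(y)=O(z^2)$ is integrable and every jump keeps $\mathcal{Y}$ inside $[0,1]$ for $f\in\mathcal{C}^\infty(\intervalleff{0}{1})$.

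Second, I read off the semimartingale characteristics of $\mathcal{Y}$ from the martingale problem. Taking $f(y)=y$ gives that $\mathcal{M}_t:=\mathcal{Y}_t-\mathcal{Y}_2+\alpha\int_2^t\mathcal{Y}_v(1-\mathcal{Y}_v)\,\mathrm{d}v$ is a martingale (the L\'evy part of the generator vanishing for linear $f$), so the predictable drift of $\mathcal{Y}$ is $-\alpha\int_2^\cdot\mathcal{Y}_v(1-\mathcal{Y}_v)\,\mathrm{d}v$; taking $f(y)=y^2$ shows the predictable bracket of $\mathcal{M}$ equals $\int_2^\cdot\int z^2K(\mathcal{Y}_{v-},\mathrm{d}z)\,\mathrm{d}v$, which coincides with the compensator of $\sum_{s}(\Delta\mathcal{Y}_s)^2$, forcing $\mathcal{M}$ to have no continuous part. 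Applying the martingale problem to test functions approximating a bounded continuous $g$ vanishing near $0$, one identifies the predictable compensator of the jump measure $\mu^{\mathcal{Y}}$ of $\mathcal{Y}$ as $\nu(\mathrm{d}v,\mathrm{d}z)=\mathrm{d}v\,K(\mathcal{Y}_{v-},\mathrm{d}z)$. Thus $\mathcal{Y}$ is a purely discontinuous semimartingale with the above drift and jump compensator.

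Third, I construct $M$ on an enlargement of the probability space. Each jump of $\mathcal{Y}$ has the form $\Delta\mathcal{Y}_s=p(1-\mathcal{Y}_{s-})>0$ or $\Delta\mathcal{Y}_s=-p\,\mathcal{Y}_{s-}<0$ for a unique $p\in(0,1]$, so $p$ is a measurable function of $(\mathcal{Y}_{s-},\Delta\mathcal{Y}_s)$; only the location of $u$ within $[0,\mathcal{Y}_{s-}]$, resp. $(\mathcal{Y}_{s-},1]$, is lost. Adjoining an independent family of uniform variables, I define $M$ by sending each jump time $s$ to the point $(s,u,p)$ with this $p$ and with $u$ drawn uniformly on the relevant subinterval. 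Running the change of variables of Step 1 backwards shows that the compensator of $M$ is the \emph{deterministic} measure $\mathrm{d}v\,\mathrm{d}u\,\frac{\mathrm{d}p}{p^2}$, the dependence on $\mathcal{Y}_{v-}$ cancelling exactly; by Watanabe's characterisation, an integer-valued random measure with diffuse deterministic compensator is a Poisson point process, so $M$ has the intensity $\mathrm{d}v\otimes\mathrm{d}u\otimes\frac{\mathrm{d}p}{p^2}$ required by \eqref{Bolthausen-Sznitman SDE}.

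Finally, by construction $\int_{\intervalleff{2}{t}\times\intervalleff{0}{1}^2}p(\mathds{1}_{\{u\leq\mathcal{Y}_{s-}\}}-\mathcal{Y}_{s-})M(\mathrm{d}s,\mathrm{d}u,\mathrm{d}p)$, understood as the $L^2$-limit of the integrals over $\{p>\epsilon\}$ on which the compensated and uncompensated integrals agree because $\int_0^1(\mathds{1}_{\{u\leq y\}}-y)\,\mathrm{d}u=0$, reproduces the martingale part $\mathcal{M}$ of $\mathcal{Y}$. Together with the drift of Step 2 this is exactly \eqref{Bolthausen-Sznitman SDE}, so $\mathcal{Y}$ is a weak solution, and pathwise uniqueness from \cite{DL12} then fixes its law. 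I expect the main obstacle to be Step 3: setting up the randomised reconstruction of $M$ on an appropriate filtered extension and checking rigorously, through Watanabe's theorem, that the $\mathcal{Y}_{v-}$-dependence genuinely cancels, so that $M$ is Poisson with the prescribed deterministic intensity rather than merely a random measure with the correct mean.
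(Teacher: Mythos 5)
Your proposal is correct, but it follows a genuinely different route from the paper, which offers no self-contained argument for this lemma: it simply invokes Theorem 2.3 of \cite{K11} (Kurtz's general equivalence between martingale problems and stochastic equations) and points to the duality argument sketched in Section 3.3 of \cite{BP15}, adapted from \cite{BL05}. What you do is, in effect, instantiate Kurtz's abstract theorem by hand for this particular jump SDE: you rewrite the generator in L\'evy form with kernel $K(y,\mathrm{d}z)=y(1-y)z^{-2}\mathrm{d}z$ on $\intervallefo{-y}{0}\cup\intervalleof{0}{1-y}$ (your computation of the second moment $y(1-y)$ and of the slice-wise cancellation is right), identify the semimartingale characteristics $\left(B,0,\mathrm{d}v\,K(\mathcal{Y}_{v-},\mathrm{d}z)\right)$ --- for which the cleanest reference is \cite{JS03} Theorem II.2.42, which converts the $\mathcal{C}^2$ martingale problem directly into characteristics and spares you the ad hoc approximation by test functions vanishing near $0$ --- and then reconstruct $M$ by marking the jumps with auxiliary uniforms. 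The computational heart, namely that the pushforward of $K(\mathcal{Y}_{v-},\mathrm{d}z)$ under the inverse substitution, multiplied by the uniform densities $1/\mathcal{Y}_{v-}$ and $1/(1-\mathcal{Y}_{v-})$ on the two $u$-slices, is the deterministic measure $\mathrm{d}u\,\mathrm{d}p/p^2$, is exactly right and is where the model-specific structure enters. Your route buys a self-contained proof at the price of standard but fiddly extension-of-filtration bookkeeping; the paper's citation buys brevity, and the duality route of \cite{BP15,BL05} reaches the identification in law without ever reconstructing $M$.

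Two points need tightening. First, Watanabe's characterisation (\cite{JS03} Theorem II.4.8) requires the compensator of $M$ to be deterministic on the whole strip $\intervalleff{2}{T-1}\times\intervalleff{0}{1}^2$, but on the random set $\{v:\mathcal{Y}_{v-}\in\{0,1\}\}$ the kernel $K(\mathcal{Y}_{v-},\cdot)$ vanishes and there are no jumps of $\mathcal{Y}$ available to mark; since a priori a solution of the martingale problem is only $\intervalleff{0}{1}$-valued, you must superpose an independent Poisson measure with intensity $\mathrm{d}v\,\mathrm{d}u\,\mathrm{d}p/p^2$ on that set to restore the full intensity --- the cancellation you highlight holds only where $\mathcal{Y}_{v-}\in\intervalleoo{0}{1}$. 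Second, to conclude that the reconstructed integral reproduces $\mathcal{M}$, it is cleanest to note that both are purely discontinuous martingales (local, here bounded, hence true) with identical jumps by construction, so their difference is a continuous purely discontinuous martingale, hence zero. Neither point invalidates the argument; both are standard patches in this classical representation scheme.
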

	
	Lemma \ref{Lemma sol mg pb is sol SDE} follows from Theorem 2.3 in \cite{K11}, which addresses the question of when a solution of a martingale problem is also a solution of the associated SDE, for general Markov processes.
	For a more specific treatment of this question in our setting, the reader may read Section 3.3 of \cite{BP15}, that sketches an adaptation of an elegant duality argument from \cite{BL05} (see proof of Lemma 1 therein).

%\bibliographystyle{plain}
%	\bibliography{references}	
	
\end{document}